\documentclass[notitlepage,11pt,reqno]{amsart}
\usepackage[foot]{amsaddr}
\usepackage[numbers,sort,compress]{natbib}
\usepackage{amssymb,nicefrac,bm,upgreek,mathtools,verbatim,pdf14}
\usepackage[final]{hyperref}
\usepackage[mathscr]{eucal}
\usepackage{dsfont}
\usepackage{graphicx}
\usepackage[margin=10pt,font=small,labelfont=bf]{caption}
\usepackage[normalem]{ulem}
\usepackage[margin=1in]{geometry}
\allowdisplaybreaks

\usepackage{color}
\definecolor{dmagenta}{rgb}{.4,.1,.5}
\definecolor{dblue}{rgb}{.0,.0,.5}
\definecolor{dred}{rgb}{.6,.0,.0}
\definecolor{dgreen}{rgb}{.0,.5,.0}


\newtheorem{lemma}{Lemma}[section]
\newtheorem{theorem}{Theorem}[section]
\newtheorem{proposition}{Proposition}[section]
\newtheorem{corollary}{Corollary}[section]

\theoremstyle{definition}
\newtheorem{definition}{Definition}[section]

\theoremstyle{remark}
\newtheorem{remark}{Remark}[section]
\numberwithin{equation}{section}

\hypersetup{
  colorlinks=true,
  citecolor=dblue,
  linkcolor=dblue,
  frenchlinks=false,
  pdfborder={0 0 0},
  naturalnames=false,
  hypertexnames=false,
  breaklinks}
\usepackage[capitalize,nameinlink]{cleveref}

\crefname{section}{Section}{Sections}
\crefname{subsection}{Section}{Sections}
\crefname{condition}{Condition}{Conditions}
\crefname{hypothesis}{Hypothesis}{Conditions}
\crefname{assumption}{Assumption}{Assumptions}
\crefname{lemma}{Lemma}{Lemmas}

\Crefname{figure}{Figure}{Figures}

\crefformat{equation}{\textup{#2(#1)#3}}
\crefrangeformat{equation}{\textup{#3(#1)#4--#5(#2)#6}}
\crefmultiformat{equation}{\textup{#2(#1)#3}}{ and \textup{#2(#1)#3}}
{, \textup{#2(#1)#3}}{, and \textup{#2(#1)#3}}
\crefrangemultiformat{equation}{\textup{#3(#1)#4--#5(#2)#6}}%
{ and \textup{#3(#1)#4--#5(#2)#6}}{, \textup{#3(#1)#4--#5(#2)#6}}%
{, and \textup{#3(#1)#4--#5(#2)#6}}

\Crefformat{equation}{#2Equation~\textup{(#1)}#3}
\Crefrangeformat{equation}{Equations~\textup{#3(#1)#4--#5(#2)#6}}
\Crefmultiformat{equation}{Equations~\textup{#2(#1)#3}}{ and \textup{#2(#1)#3}}
{, \textup{#2(#1)#3}}{, and \textup{#2(#1)#3}}
\Crefrangemultiformat{equation}{Equations~\textup{#3(#1)#4--#5(#2)#6}}%
{ and \textup{#3(#1)#4--#5(#2)#6}}{, \textup{#3(#1)#4--#5(#2)#6}}%
{, and \textup{#3(#1)#4--#5(#2)#6}}

\crefdefaultlabelformat{#2\textup{#1}#3}
%

\newcommand{\cE}{{\mathcal{E}}}  
\newcommand{\eom}{{\mathscr{G}}} 
\newcommand{\cG}{{\mathcal{G}}}  
\newcommand{\sH}{{\mathscr{H}}}  
\newcommand{\cI}{{\mathcal{I}}}  
\newcommand{\cJ}{{\mathcal{J}}}  
\newcommand{\cK}{{\mathcal{K}}}  
\newcommand{\Lg}{\mathcal{L}}    
\newcommand{\cL}{{\mathscr{L}}}  

\newcommand{\cP}{{\mathcal{P}}}  
\newcommand{\cS}{{\mathcal{S}}}  
\newcommand{\sS}{{\mathscr{S}}}  
\newcommand{\Lyap}{{\mathcal{V}}}  
\newcommand{\sX}{{\mathscr{X}}}  
\newcommand{\cX}{{\mathcal{X}}}  
\newcommand{\sZ}{{\mathscr{Z}}} 
\newcommand{\cZn}{{\mathcal{Z}^n}} 
\newcommand{\hcZn}{{\Hat{\mathcal{Z}}^n}} 
\newcommand{\bcZn}{{\Breve{\mathcal{Z}}^n}} 
\newcommand{\fZ}{{\mathfrak{Z}}} 

\newcommand{\RR}{\mathds{R}}
\newcommand{\NN}{\mathds{N}}
\newcommand{\ZZ}{\mathds{Z}}
\newcommand{\RI}{\mathds{R}^{I}}
\newcommand{\Rd}{\mathds{R}^{d}}
\DeclareMathOperator{\Exp}{\mathbb{E}}
\DeclareMathOperator{\Prob}{\mathbb{P}}
\newcommand{\D}{\mathrm{d}}
\newcommand{\E}{\mathrm{e}}

\newcommand{\Act}{{\mathbb{U}}}
\newcommand{\Uadm}{\mathfrak{U}}
\newcommand{\Usm}{\mathfrak{U}_{\mathrm{SM}}}
\newcommand{\Ussm}{\mathfrak{U}_{\mathrm{SSM}}}

\newcommand{\sF}{\mathfrak{F}}  
\newcommand{\Ind}{\mathds{1}}   
\newcommand{\Cc}{\mathcal{C}}   

\newcommand{\abs}[1]{\lvert#1\rvert}
\newcommand{\norm}[1]{\lVert#1\rVert}
\newcommand{\babs}[1]{\bigl\lvert#1\bigr\rvert}
\newcommand{\Babs}[1]{\Bigl\lvert#1\Bigr\rvert}
\newcommand{\babss}[1]{\biggl\lvert#1\biggr\rvert}
\newcommand{\bnorm}[1]{\bigl\lVert#1\bigr\rVert}

\newcommand{\transp}{^{\mathsf{T}}}
\newcommand{\df}{\coloneqq}

\DeclareMathOperator*{\diag}{diag}

\newcommand{\order}{{\mathscr{O}}}
\newcommand{\sorder}{{\mathfrak{o}}}
\newcommand{\grad}{\nabla}

\usepackage{accents}
\newlength{\dhatheight}
\newcommand{\doublehat}[1]{%
    \settoheight{\dhatheight}{\ensuremath{\Hat{#1}}}%
    \addtolength{\dhatheight}{-0.30ex}%
    \Hat{\vphantom{\rule{1pt}{\dhatheight}}%
    \smash{\Hat{#1}}}}
\begin{document}

\title[Asymptotic Average Optimality for Large-Scale Parallel Server Networks]
{Infinite Horizon Asymptotic Average Optimality\\
for Large-Scale Parallel Server Networks}

\author[Ari Arapostathis]{Ari Arapostathis$^\dag$}
\address{$^\dag$ Department of Electrical and Computer Engineering\\
The University of Texas at Austin, EER~7.824,
Austin, TX~~78712}
\email{ari@ece.utexas.edu}
\author[Guodong Pang]{Guodong Pang$^\ddag$}
\address{$^\ddag$ The Harold and Inge Marcus Dept.
of Industrial and Manufacturing Eng.\\
College of Engineering\\
Pennsylvania State University\\
University Park, PA~~16802}
\email{gup3@psu.edu} 

\date{\today}

\begin{abstract}
We study infinite-horizon asymptotic average optimality for parallel server networks
with multiple classes of jobs and multiple server pools in the  Halfin--Whitt  regime. 
Three control formulations are considered: 1) minimizing the queueing and idleness cost,
2) minimizing the queueing cost under a 
constraints on idleness at each server pool, and
3)  fairly allocating the idle servers among different server pools.
For the third problem, we consider a class of
\emph{bounded-queue, bounded-state} (BQBS)  stable networks,
in which any moment of the state is bounded by that of the queue only
(for both the limiting diffusion and diffusion-scaled state processes). 
We show that the optimal values for the diffusion-scaled state processes converge
to the corresponding values of the ergodic control problems for the limiting diffusion.  
We present a family of state-dependent Markov balanced saturation
policies (BSPs) that stabilize the controlled diffusion-scaled state processes.
It is shown that under these policies, the diffusion-scaled
state process is exponentially ergodic, provided that
at least one class of jobs has a positive  abandonment rate.
We also establish useful moment bounds, and study
the ergodic properties of the diffusion-scaled state
processes, which play a crucial role in  proving the asymptotic optimality.
\end{abstract}

\subjclass[2000]{60K25, 68M20, 90B22, 90B36}

\keywords{multiclass multi-pool Markovian queues,
Halfin--Whitt (QED) regime,
ergodic control (with constraints),
fairness, 
exponential stability,
balanced saturation policy (BSP), 
bounded-queue bounded-state (BQBS) stable networks,
asymptotic optimality}

\maketitle


\section{Introduction}

Large-scale parallel server networks are used to model various service,
manufacturing and telecommunications systems; see, e.g., 
\cite{AAM,  AIMMTYT, CY01, GKM03, Hall,   KY14, MMR, PY11, Shietal, TX13, WW02a,
WW02, Williams16}.  
We consider multiclass multi-pool networks operating in the Halfin--Whitt (H--W) regime,
where the demand of each class and the numbers of servers in each pool get large
simultaneously in an appropriate manner so that the system becomes critically
loaded while the service and abandonment rates are fixed. 
We study optimal control problems of such networks under the infinite-horizon expected
average (ergodic) cost criteria, since steady-state performance measures are among
the most important metrics to understand the system dynamics. 
Specifically, we consider the following unconstrained and constrained ergodic
control problems (see \cref{S3.1,S4.2}):
(P1) minimizing the queueing and idleness cost,
(P2) minimizing the queueing cost while imposing a dynamic constraint on the
idleness of each server pool (e.g., requiring that
the long-run average idleness does not exceed a
given threshold), and
(P3) minimizing the queueing cost while requiring fairness on idleness
(e.g., the average idleness of each server pool is a fixed proportion of the
total average idleness of all server pools).
The scheduling policy determines the allocation of service capacity to each
class at each time.
We consider only work conserving scheduling policies that are
non-anticipative and preemptive.

In \cite{ABP14} and \cite{AP15}, we have studied the corresponding ergodic control
problems (P1$'$)--(P2$'$) for the limiting diffusions arising from such networks
(see \cref{S3.2}).
Problem~(P1$'$) for multiclass networks (``V'' networks) was studied in
\cite{ABP14}, where a comprehensive study of the ergodic control problem
for a broader class of diffusions as well as asymptotic optimality
results for the network model were presented.
In \cite{AP15} we have shown that problem (P1$'$) and (P2$'$) are well-posed
for multiclass multi-pool networks, and presented a full characterization
of optimality for the limiting diffusion.
We also provided important insights on the stabilizability\footnote{We say that
a control (policy) is stabilizing, if it results
in a finite value for the optimization criterion.}
of the controlled diffusion by employing a leaf elimination algorithm,
which is used to derive an explicit expression for the drift.
We addressed problem (P3) for the `N' network model,
where we studied its well-posedness, characterized the optimal solutions, and
established asymptotic optimality in \cite{AP16}. 
For this particular network topology, the fairness constraint requires that the
long-run average idleness of the two server pools satisfies a fixed ratio condition.

In this paper we establish the asymptotic optimality for
the ergodic control problems (P1)--(P3).
In other words, we show that the optimal
values for the
diffusion-scaled state processes converge to the corresponding values for the
limiting diffusion.
The main challenge lies in understanding the recurrence
properties of the diffusion-scaled state
processes for multiclass multi-pool networks in the H--W regime.
Despite the recent studies on stability of multiclass multi-pool networks under
certain scheduling policies
\cite{stolyar-yudovina-12a, stolyar-yudovina-12b, stolyar-15},
the existing results are not sufficient for our purpose.
The difficulty is particularly related to the so-called ``joint work conservation"
(JWC) condition, which requires that no servers are idling unless all the queues
are empty, and
which plays a key role in the derivation of the limiting diffusion, and
the study of  discounted control problems in \cite{Atar-05a, Atar-05b};
see  \cref{S2.2} for a detailed discussion. 
For the limiting diffusion, the JWC condition holds over the entire state space;
however, for the diffusion-scaled state process in the $n^{\rm th}$ system
($n$ is the scaling parameter), it holds only in a bounded subset of the state space.
As a consequence, a stabilizing control\footnote{To avoid confusion, `control'
always refers to a
control strategy for the limiting diffusion, while `policy' refers to
a scheduling strategy for the pre-limit model.}
for the limiting diffusion cannot be
directly translated to a scheduling policy for the $n^{\rm th}$ system which
stabilizes the diffusion-scaled state process in the H--W regime. 

Our first main contribution addresses the above mentioned critical issue
of stabilizability
of the diffusion-scaled state processes.
We have identified a family of stabilizing policies for multiclass multi-pool networks,
which we refer to as the ``Balanced Saturation Policies" (BSPs) (see \cref{D-SDPf}). 
Such a policy strives to keep the state process for each class `close' to the
corresponding steady state quantity, which is state-dependent and dynamic. 
The specific stabilizing policy for the `N' network presented
in \cite{AP16} belongs to this
family of BSPs. 
We show that if the abandonment parameter is positive for at least one class,
then the diffusion-scaled state processes are exponentially stable under
any BSP (see \cref{P5.1}).

In addition to the ergodicity properties,
moment bounds are also essential
to prove asymptotic optimality. 
An important implication of the exponential ergodicity property proved in
\cite[Theorem~4.2]{AP15} is that the controlled diffusion satisfies a very
useful moment bound, see \cref{E-BQYBS}, namely,
 that any moment (higher than first order) of the state is
controlled by the corresponding moments of the queue and idleness.
This moment bound is also shown for the $n^{\rm th}$ system (\cref{P6.1}).
In studying the moment bounds, we have identified an important class of multiclass
multi-pool networks, which we refer to as \emph{bounded-queue, bounded-state} (BQBS)
stable networks (\cref{S4}).
The limiting diffusion of this class of networks has the following important property.
Any moment (higher than first order) of the state is controlled by
the corresponding moment of the queue alone (see \cref{P6.2}). 
The class of BQBS stable networks contains many interesting examples,
including networks with a single dominant class (see \cref{fig-networks}),
and networks with certain parameter constraints, e.g.,
service rates that are only pool-dependent.
It is worth noting that the  ergodic control problem with fairness constraints
for the general multiclass multi-pool networks may not be well-posed.
However, the problems (P3)
and (P3$'$) are well-posed for the BQBS stable networks. 
In addition, the problems (P1) and (P1$'$) which only penalize the queueing
cost are also well-posed for the BQBS stable networks. 

The proof of asymptotic optimality involves the convergence of the value functions,
specifically, establishing the lower and upper bounds
(see \cref{T3.1,T3.2,T4.2}). 
In establishing the lower bound, the arguments are analogous to those for the `N' network
in \cite{AP16} and the `V' network in \cite{ABP14}.
This involves proving the tightness of the mean empirical measures
of the diffusion-scaled state process, controlled under
some eventually JWC scheduling policy (see \cref{DEJWC}), and also
showing that any limit of these empirical measures is an ergodic occupation
measure for the limiting diffusion model (see \cref{L6.1}). 

The proof of the upper bound is the most challenging. 
We utilize the following important property which arises
from a spatial truncation technique
for all three problems (P1$'$)--(P3$'$): there exists a continuous precise
stationary Markov control $\Bar{v}_{\epsilon}$ which is $\epsilon$-optimal
for the limiting diffusion control problem,  and under which
the diffusion is exponentially ergodic 
(see  \cref{L7.1}, \cref{C7.1}, and \cite[Theorem~4.2]{ABP14}).  
For the $n^{\rm th}$ system, we construct a concatenated admissible
policy in the following manner. In the JWC conservation region, we apply a
scheduling policy constructed canonically from the Markov control $\Bar{v}_{\epsilon}$
(see \cref{D6.1}), while outside the JWC region, we apply a fixed BSP.
We also show that that under this concatenated policy the
diffusion-scaled state process is exponentially ergodic,
and its mean empirical measures converge to the ergodic occupation
measure of the limiting diffusion associated with the control $\Bar{v}_{\epsilon}$. 

\vspace{-5pt}
\subsection{Literature review}

There is an extensive literature on scheduling control of multiclass multi-pool
networks in the H--W regime.
For the infinite-horizon discounted criterion, Atar \cite{Atar-05a, Atar-05b} first
studied the unconstrained scheduling control problem under a set of conditions
on the network structure, the system parameters, and the running cost function
(Assumptions~2 and 3 in \cite{Atar-05b}).
Atar et al. \cite{Atar-09} further investigated simplified models with service rates
that either only class-dependent, or pool-dependent.
Gurvich and Whitt \cite{GW09, GW09b, GW10} studied
queue-and-idleness-ratio
controls for multiclass multi-pool networks,
by proving a state-space-collapse (SSC) property
under suitable conditions on the network structure and system parameters
(Theorems~3.1 and 5.1 in \cite{GW09}). 
For finite-horizon cost criteria,  Dai and Tezcan
\cite{dai-tezcan-08, dai-tezcan-11} studied scheduling
controls of multiclass multi-pool networks,
also by proving an SSC property under certain assumptions.

There has also been a lot of activity on ergodic control of multiclass multi-pool
networks in the H--W regime,
in addition to \cite{ABP14,AP15,AP16} mentioned earlier.
For the inverted `V' model, Armony \cite{Armony-05} has shown that the
fastest-server-first policy is asymptotically optimal for minimizing the
steady-state expected queue length and waiting time,
and Armony and Ward \cite{AW-10} have shown that  a threshold policy is
asymptotically optimal for minimizing
the expected queue length and waiting time subject to a ``fairness" constraint
on the workload division. 
For multiclass multi-pool networks, Ward and Armony \cite{WA-13} have studied blind
fair routing policies, and  used simulations to validate their performance, and
compared them with non-blind policies derived from the limiting diffusion
control problem.
Biswas \cite{Biswas-15} recently studied a specific multiclass multi-pool network with
``help" where each server pool has a dedicated stream of a customer class,
and can help with other customer classes only when it has idle servers. 
For this network model, the control policies may not be work-conserving,
and the associated controlled diffusion has a uniform stability property,
which is not satisfied for general multiclass multi-pool networks.
 
This work contributes to the understanding of the stability of multiclass multi-pool 
 networks in the H-W regime. 
 Gamarnik and Stolyar \cite{gamarnik-stolyar} studied the tightness of the stationary distributions
 of the diffusion-scaled state processes under any work conserving 
scheduling policy for the `V' network, while ergodicity properties for the limiting diffusion under constant
Markov controls are established in  \cite{dieker-gao, APS17}. 
We refer the reader to  \cite{stolyar-yudovina-12b, stolyar-yudovina-12a, stolyar-15} for 
the stability analysis of a load balancing scheduling
policy, ``longest-queue freest-server" (LQFS-LB), 
and a \emph{leaf activity priority} policy, for multiclass multi-pool networks. 
For the `N' network with no abandonment,  Stolyar \cite{stolyar-14} studied the stability of
a static priority scheduling policy.
\subsection{Organization of the paper}
In the subsection which follows we summarize the notation used in the paper.
In \cref{S-MM}, we describe the model and the scheduling control problems,
and in \cref{S2.2}, we discuss the JWC condition.
In \cref{S2.3}, we state some basic properties of the diffusion-scaled
processes and the control parameterization, which leads to the diffusion limit.
In \cref{S2.4}, we review some relevant properties of the limiting
diffusion from \cite{AP15}. 
We state the control objectives of the problems (P1) and (P2) in \cref{S3.1},
and the corresponding diffusion control problems (P1$'$) and (P2$'$) in
\cref{S3.2}, and summarize the asymptotic optimality results in
\cref{S3.3}. 
In \cref{S4} we describe the BQBS stable networks and study the fairness
problems (P3) and (P3$'$). 
In \cref{S5}, we introduce the family of stabilizing BSPs, and show that
under these we have exponential stability. 
In \cref{S6}, we focus on the ergodic properties of the
$n^{\rm th}$ system, including the moment bounds, convergence of mean empirical
measures and a stability preserving property in the JWC region. 
In \cref{S-LUB}, we complete the proofs of the lower and upper bounds
of the three problems. 
We conclude in \cref{S8}. 

\subsection{Notation}
The symbol $\RR$ denotes the field of real numbers,
and $\RR_{+}$ and $\NN$ denote the sets of nonnegative
real numbers and natural numbers, respectively.
The minimum (maximum) of two real numbers $a$ and $b$,
is denoted by $a\wedge b$ ($a\vee b$).
Define $a^{+}\df a\vee 0$ and $a^{-}\df-(a\wedge 0)$. 
The integer part of a real number $a$ is denoted by $\lfloor a\rfloor$.
We also let $e\df (1,\dotsc,1)\transp$.

For a set $A\subset\Rd$, we use
$\Bar A$, $A^{c}$, and $\Ind_{A}$ to denote the closure,
the complement, and the indicator function of $A$, respectively.
A ball of radius $r>0$ in $\Rd$ around a point $x$ is denoted by $B_{r}(x)$,
or simply as $B_{r}$ if $x=0$.
The Euclidean norm on $\Rd$ is denoted by $\abs{\,\cdot\,}$,
$x\cdot y$ denotes the inner product of $x,y\in\RR^{d}$,
and $\norm{x}\df \sum_{i=1}^{d}\abs{x_{i}}$.
 
We let $\Cc^{\infty}_{c}(\RR^{d})$ denote the set of smooth real-valued functions
on $\RR^d$ with compact support.
For a Polish space $\cX$, we denote by $\cP(\cX)$ the space of
probability measures on the Borel subsets of $\cX$ under the Prokhorov topology.
For $\nu\in\cP(\cX)$ and a Borel measurable map $f\colon\cX\to\RR$,
we often use the abbreviated notation
$\nu(f)\df \int_{\cX} f\,\D{\nu}\,.$
The quadratic variation of a square integrable martingale is
denoted by $\langle\,\cdot\,,\cdot\,\rangle$.
For any path $X(\cdot)$ of a c\`adl\`ag process,
we use the notation $\Delta X(t)$ to denote the jump at time $t$.

\section{The Model}
All random variables introduced below are defined on a complete
probability space 
$(\Omega,\mathfrak{F},\Prob)$ and $\Exp$ denotes the associated expectation
operator.

\subsection{The multiclass multi-pool network model} \label{S-MM}

We consider a sequence of network systems with the associated variables,
parameters and processes indexed by $n$. 
Each of these, is a multiclass multi-pool Markovian network with $I$
classes of customers
and $J$ server pools, labeled as $1,\dots, I$ and $1,\dots, J$, respectively.
Let $\cI = \{1,\dots,I\}$ and $\cJ = \{1, \dots, J\}$. 
Customers of each class form their own queue and are served in the
first-come-first-served (FCFS) service discipline.
The buffers of all classes are assumed to have infinite capacity.
Customers can abandon/renege while waiting in queue. 
Each class of customers can be served by a subset of server pools,
and each server pool can serve a subset of customer classes.
We let $\cJ(i) \subset \cJ$, denote the subset of server pools that can serve class
$i$ customers, and $\cI(j) \subset \cI$ the subset
of customer classes that can be served by server pool $j$.
We form a bipartite graph $\cG = (\cI\cup \cJ, \cE)$ with a set
of edges defined by $\cE = \{(i,j)\in\cI\times\cJ\colon j\in\cJ(i)\}$, and
use the notation $i \sim j$, if $(i,j)\in\cE$,
and $i \nsim j$, otherwise.
We assume that the graph $\cG$ is a tree. 

For each $j \in \cJ$, let $N_j^{n}$ be the number of servers
(statistically identical) in server pool $j$. Set $N^n = (N^n_j)_{j \in \cJ}$. 
Customers of class $i \in \cI$ arrive according to a Poisson
process with rate $\lambda^{n}_i>0$,
and have class-dependent exponential abandonment rates $\gamma_i^{n} \ge 0$.
These customers are served at an exponential rate
$\mu_{ij}^{n}>0$ at server pool $j$, if $i \sim j$, and 
we set $\mu_{ij}^{n}=0$, if $i \nsim j$.
Thus, the set of edges $\cE$ can thus be written as
$\cE=\bigl\{(i, j) \in \cI\times \cJ\;\colon\, \mu_{ij}^{n} >0\bigr\}$.
We assume that the customer arrival, service, and abandonment processes of
all classes are mutually independent.
We define
\begin{equation*}
\RR^{\cG}_+ \;\df\; \bigl\{\xi=[\xi_{ij}]\in\RR^{I \times J}_+\colon
\xi_{ij}=0~~\text{for~}i\nsim j\bigr\}\,,
\end{equation*}
and analogously define $\ZZ^{\cG}_+$.

\subsubsection{The Halfin--Whitt regime} 
We study these multiclass multi-pool networks in the Halfin--Whitt regime
(or the Quality-and-Efficiency-Driven (QED) regime), where the arrival
rates of each class and the numbers of servers of each server pool grow
large as $n \to \infty$ in such a manner that the system becomes
critically loaded.
Throughout the paper, the set of parameters is assumed to satisfy the following. 

\noindent
\textsl{Parameter Scaling.}
There exist positive constants $\lambda_i$ and  $\nu_j$,
nonnegative constants $\gamma_i$ and  $\mu_{ij}$, with $\mu_{ij}>0$
for $i\sim j$ and $\mu_{ij}=0$ for $i\nsim j$,  and constants 
$\Hat{\lambda}_i$, $\Hat{\mu}_{ij}$ and $\Hat{\nu}_j$, such that
the following limits exist as $n\to\infty$.
\begin{equation}\label{HWpara}
\frac{\lambda^{n}_{i} - n \lambda_{i}}{\sqrt{n}} \;\to\;\Hat{\lambda}_{i}\,,\qquad
{\sqrt{n}}\,(\mu^{n}_{ij} - \mu_{ij}) \;\to\;\Hat{\mu}_{ij}\,,
\qquad \frac{N^{n}_{j} -  n\nu_{j}}{\sqrt{n}} \;\to\;  \Hat{\nu}_j \,, \qquad
\gamma_{i}^{n} \;\to\;\gamma_{i}\,.
\end{equation}

\noindent
\textsl{Fluid scale equilibrium.}
We assume that the linear program (LP) given by
\begin{align*}
\text{Minimize} \quad  \max_{j \in \cJ}\;\sum_{i \in \cI} \xi_{ij}\,,
\quad
\text{subject to} \quad  \sum_{j \in \cJ} \mu_{ij} \nu_j \xi_{ij}
\;=\; \lambda_i\,,~ i \in \cI\,,\quad
\text{and\ \ } [\xi_{ij}] \in\RR^{\cG}_+\,,
\end{align*}
has a unique solution
$\xi^*=[\xi^*_{ij}]\in\RR^{\cG}_+$  satisfying 
\begin{equation} \label{critfluid}
\sum_{i \in \cI} \xi^*_{ij} \;=\; 1, \quad \forall j \in \cJ \,,
\quad\text{and}\quad \xi^*_{ij}>0\quad \text{for all~} i \sim j\,.
\end{equation}
This assumption is referred to as the \emph{complete resource pooling} condition
\cite{williams-2000, Atar-05b}. 
It implies that the graph $\cG$ is a tree \cite{williams-2000, Atar-05b}.

We define  $x^*=(x_{i}^{*})_{i \in \cI}\in\RR^I_+$, and
$z^* = [z_{ij}^*]\in\RR^{\cG}_+$ by
\begin{equation} \label{staticfluid}
x_{i}^{*}\;=\;\sum_{j \in \cJ} \xi^*_{ij} \nu_j\,,
\qquad z_{ij}^* = \xi_{ij}^* \nu_j \,. 
\end{equation}
The vector  $x^*$ can be interpreted as the steady-state total number
of customers in each class, and the matrix $z^*$ as the steady-state number
of customers in each class receiving service, in the fluid scale.
Note that the steady-state queue lengths are all zero in the fluid scale.
The quantity $\xi_{ij}^*$  can be interpreted as the steady-state fraction of
service allocation of pool~$j$ to class-$i$ jobs in the fluid scale.
It is evident that \cref{critfluid,staticfluid} imply that
$e\cdot  x^*= e \cdot  \nu$, 
where $\nu\df(\nu_j)_{j \in \cJ}$.

\subsubsection{The state descriptors}
For $i\in\cI$,
let $X^{n}_i = \{X^{n}_i(t)\colon t\ge 0\}$ and $Q^{n}_i = \{Q^{n}_i(t)\colon t\ge 0\}$
be the number of class $i$ customers in the system and in the queue, respectively,
and for $j\in\cJ$, let  $Y^{n}_j = \{Y^{n}_j(t)\colon t\ge 0\}$, be the number of
idle servers in pool $j$.
We also let $Z_{ij}^{n} = \{Z_{ij}^{n}(t)\colon t\ge 0\}$ denote the
number of class $i$ customers being served in server pool $j$.
Set $X^{n} = (X_i^{n})_{i \in \cI}$, $Y^{n} = (Y_j^{n})_{j \in \cJ}$, 
$Q^{n} = (Q_i^{n})_{i \in \cI}$,
and $Z^{n} = (Z_{ij}^{n})_{i \in \cI,\, j \in \cJ}$. 
For each $t\ge 0$, we have the fundamental balance equations
\begin{equation} \label{baleq}
\begin{split}
 X^{n}_i(t)&\;=\;Q_i^{n}(t) + \sum_{j \in \cJ(i)} Z_{ij}^{n}(t)
\qquad\forall\,i\in\cI\,, \\[5pt]
 N_j^{n}&\;=\;Y_j^{n}(t) + \sum_{i \in\cI(j)} Z_{ij}^{n}(t)
 \qquad\forall\,j\in\cJ\,.
\end{split}
\end{equation}

\subsubsection{Scheduling control}

The control process is $Z^{n}$.
We  only consider work conserving scheduling policies that are non-anticipative and
preemptive. 
Work conservation requires that the processes $Q^{n}$ and $Y^{n}$ satisfy
\begin{equation*}
Q_i^{n}(t) \wedge Y^{n}_j(t)\;=\;0
\qquad \forall i \sim j\,, \quad\forall\, t \ge 0\,. 
\end{equation*}
In other words,  whenever there are customers waiting in queues,
if a server becomes free and can serve one of the customers, the server cannot
idle and must decide which customer to serve and start service immediately.
Service preemption is allowed, that is, service of a customer can be
interrupted at any time to serve some other customer of another class and
resumed at a later time. 

For $(x,z)\in\ZZ_+^I\times \ZZ^{\cG}_+$, we define
\begin{equation}\label{E-qy}
\begin{split}
q_i(x,z) &\;\df\; x_i - \sum_{j\in\cJ}
z_{ij}\,,\quad i\in\cI\,,\\[5pt]
y_j^n(z) &\;\df\;   N_j^n - \sum_{i\in\cJ}z_{ij}\,,
\quad j\in\cJ\,,
\end{split}
\end{equation}
and the \emph{action set} $\cZn(x)$ by
\begin{equation*}
\cZn(x)\;\df\; \bigl\{z \in \ZZ^{\cG}_+\;\colon
q_i(x,z) \wedge y_j^n(z) =0\,,~
q_i(x,z)\ge0\,,~y_j^n(z) \ge0 \;\;\;\forall\,(i,j)\in\cE\bigr\}\,. 
\end{equation*}
We denote $y_j(x,z)\,=\,y_j^n(x,z)$ whenever no confusion occurs.

Let $A^{n}_i$, $S^{n}_{ij}$, and $R^{n}_i$, $(i,j)\in\cE$, be
mutually independent
rate-$1$ Poisson processes, and also independent of the initial
condition $X^{n}_i(0)$.
Define the $\sigma$-fields 
\begin{align*}
\mathscr{F}^{n}_t &\;\df\; \sigma \bigl\{ X^{n}(0), \Tilde{A}^{n}_i(t),
\Tilde{S}^{n}_{ij}(t), \Tilde{R}^{n}_i(t)\;\colon\, i \in \cI, \; j \in \cJ,
\; 0 \le s \le t \bigr\} \vee \mathcal{N} \,,\\[5pt]
\mathscr{G}^{n}_t &\;\df\; \sigma \bigl\{ \delta\Tilde{A}^{n}_i(t, r),
\delta \Tilde{S}^{n}_{ij}(t, r), \delta\Tilde{R}^{n}_i(t, r)\;\colon\, i \in \cI,
\; j \in \cJ, \; r \ge 0 \bigr\} \,,
\end{align*}
where  $\mathcal{N}$ is the collection of all $\Prob$-null sets, and
\begin{align*}
\Tilde{A}^{n}_i(t) &\;\df\; A^{n}_i(\lambda_i^{n} t),
&\quad \delta\Tilde{A}^{n}_i(t,r)
&\;\df\; \Tilde{A}^{n}_i(t+r) - \Tilde{A}^{n}_i(t) \,, \\
\Tilde{S}^{n}_{ij}(t) &\;\df\; S^{n}_{ij} \left(\mu_{ij}^{n}
\int_0^t Z_{ij}^{n}(s)\,\D{s} \right),
&\quad \delta \Tilde{S}^{n}_{ij}(t, r) &\;\df\; S^{n}_{ij}
\left( \mu_{ij}^{n}\int_0^t Z_{ij}^{n}(s)\,\D{s} + \mu_{ij}^{n} r \right)
- \Tilde{S}^{n}_{ij}(t) \,,\\
\Tilde{R}^{n}_i(t) &\;\df\; R_i^{n}
\left(\gamma_i^{n} \int_0^t Q^{n}_i(s)\,\D{s} \right)\,,
&\quad \delta \Tilde{R}^{n}_i(t, r) &\;\df\; R_i^{n}
\left(\gamma_i^{n} \int_0^t Q^{n}_i(s)\, \D{s}
+ \gamma_i^{n} r \right) -  \Tilde{R}^{n}_i(t) \,. 
\end{align*}
The filtration $\bm{\mathscr{F}}^{n}\df\{\mathscr{F}^{n}_t\colon t \ge 0\}$ represents
the information available up to time $t$, and the filtration 
$\bm{\mathscr{G}}^{n}\df\{\mathscr{G}^{n}_t\colon t \ge 0\}$ contains the information
about future increments of the processes.
 
We say that  a scheduling policy $Z^n$ is \emph{admissible} if 
\begin{enumerate}
\item[(i)] $Z^{n}(t)\in \cZn(X^n(t))$ a.s.\ for all $t\ge0$;
\smallskip
\item[(ii)] $Z^{n}(t)$ is adapted to $\mathscr{F}^{n}_t$;
\smallskip
\item[(iii)] $\mathscr{F}^{n}_t$ is independent of $\mathscr{G}^{n}_t$ at each time
$t\ge 0$;
\smallskip
\item[(iv)] for each $i \in \cI$ and $i \in \cJ$, and for each
$t\ge 0$, the process  $\delta \Tilde{S}^{n}_{ij}(t, \cdot)$ agrees in law
with $S^{n}_{ij}(\mu_{ij}^{n}\,\cdot)$, and the process
$\delta \Tilde{R}^{n}_i(t, \cdot)$
agrees in law with 
$R^{n}_i (\gamma_i^{n} \cdot)$.  
\end{enumerate}
We denote the set of all admissible scheduling policies
$(Z^{n}, \bm{\mathscr{F}}^{n}, \bm{\mathscr{G}}^{n})$ by $\fZ^{n}$.
Abusing the notation we sometimes denote this as $Z^{n}\in\fZ^{n}$.
An admissible policy is called stationary Markov if
$Z^{n}(t)= z(X^n(t))$ for some function $z\colon \ZZ_+^I\to \ZZ^{\cG}_+$,
in which case we identify the policy with the function $z$.

Under an admissible scheduling policy, the state process $X^{n}$ can be represented as
\begin{align} \label{Xrep}
X^{n}_i(t)\;=\;X^{n}_i(0) + A^{n}_i(\lambda^{n}_i t) - \sum_{j \in \cJ(i)}
S^{n}_{ij} \left( \mu_{ij}^{n}\int_0^t Z_{ij}^{n}(s) \D{s} \right)
- R_i^{n} \left(\gamma_i^{n} \int_0^t Q^{n}_i(s) \D{s} \right)\,,
\end{align}
for $i \in \cI$ and $t\ge 0$.
Under a stationary Markov policy, $X^n$ is Markov with
generator
\begin{align}
\cL_n^z f(x) \;\df\; \sum_{i\in\cI} \lambda^{n}_i \bigl(f(x+e_i) &- f(x)\bigr)
+ \sum_{i\in\cI}\sum_{j\in\cJ(i)}\mu_{ij}^{n} z_{ij} \bigl(f(x-e_i) - f(x)\bigr)
\nonumber\\
&+ \sum_{i\in\cI} \gamma_i^{n} q_i(x,z)
\bigl(f(x-e_i)- f(x)\bigr)\,, \qquad   f\in\Cc(\RR^I)\,,\quad x \in \ZZ^{I}_{+}\,.
\label{E-cL}
\end{align}

\subsection{Joint work conservation} \label{S2.2}

\begin{definition}
We say that an action $z\in\cZn(x)$ is jointly work
conserving (JWC), if
\begin{equation}\label{E-JWC}
e\cdot q(x,z) \wedge e \cdot y^{n}(z)\;=\;0\,.
\end{equation}
We define
\begin{equation*}
\sX^{n}\;\df\;\bigl\{x\in\ZZ^I_+\,\colon\, ~e\cdot q(x,z) \wedge e \cdot y^n(x,z)
\;=\;0 ~\text{for some}~ z\in\cZn(x)\bigr\}\,, 
\end{equation*}
with $q$ and $y^n$ defined in \cref{E-qy}.
\end{definition}

Since \cref{baleq} implies that
\begin{equation*}
e\cdot (x- N^{n}) \;=\; 
e\cdot q(x,z) - e\cdot y^n(z)\,,
\end{equation*}
it is clear that \cref{E-JWC} is satisfied if and only if
\begin{equation*}
e\cdot q(x,z) \,=\, \bigl[e\cdot (x- N^{n})\bigr]^+\,,
\quad\text{and}\quad 
e\cdot y^n(z) \,=\, \bigl[e\cdot (x- N^{n})\bigr]^-\,.
\end{equation*}

Let
\begin{equation*}
\varTheta^n(x)\;\df\; \bigl\{(q,y)\in\ZZ_{+}^I\times\ZZ_+^J\;\colon
e\cdot q = [e\cdot (x- N^{n})]^+\,,~e\cdot y
= [e\cdot (x- N^{n})]^-\bigr\}\,,
\qquad x\in\ZZ_+^I\,.
\end{equation*}
It is evident that the JWC condition can be met at any point $x\in\ZZ_+^I$
at which the
image of $\cZn(x)$ under the
map $z\mapsto\bigl(q(x,z), y^n(z)\bigr)$ defined in \cref{E-qy}
intersects $\varTheta^n(x)$.

Let
\begin{equation*}
D_{\Psi} \;\df\; \bigl\{ (\alpha, \beta) \in \RR^{I}\times \RR^{J}\;\colon\,
e \cdot \alpha = e \cdot \beta\bigr\}\,.
\end{equation*}
As shown in Proposition~A.2 of \cite{Atar-05a},
provided that $\cG$ is a tree,
there exists a unique linear map $\Psi=[\Psi_{ij}]\colon D_\Psi\to \RR^{I\times J}$
solving
\begin{equation} \label{E-Psi}
\sum_j \Psi_{ij}(\alpha,\beta)\;=\;\alpha_i \quad \forall i \in \cI\,,
\quad\text{and}\quad
\sum_{i} \Psi_{ij}(\alpha,\beta)\;=\;\beta_j \quad \forall j \in \cJ\,,
\end{equation}
with $\Psi_{ij}(\alpha,\beta) \;=\;0$ for $i \nsim j$.

We quote a result from \cite{Atar-05b}, which is used later.
The proof of Lemma~3 in \cite{Atar-05b} assumes that the limits
in \cref{HWpara} exist, in particular, $\sqrt{n} (N_j^n - n \nu_j) \to 0$
as $n\to \infty$. 
Nevertheless, the proof goes through under the weaker assumption
that $N_j^n - n \nu_j = \sorder(n)$. 

\begin{lemma}[Lemma~3 in \cite{Atar-05b}]\label{L-JWC}
There exists a constant $M_{0}>0$ such that, the collection
of sets $\Breve{\sX}^{n}$ defined by
\begin{equation}\label{E-BsX}
\Breve{\sX}^{n}\;\df\;
\bigl\{x \in \ZZ^{I}_{+}\;\colon\,  \norm{x - n x^*}\le M_{0}\, n  \bigr\}\,,
\end{equation} 
satisfies $\Breve{\sX}^{n}\subset\sX^n$ for all $n\in\NN$.
Moreover,
\begin{equation*}
\Psi(x-q,N^n-y)\in\ZZ_+^{I\times J}\qquad\forall\,q,y\in\varTheta^n(x)\,,
\quad\forall\,x\in \Breve{\sX}^{n}\,.
\end{equation*}
\end{lemma}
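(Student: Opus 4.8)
The plan is to prove the second assertion first and then derive the inclusion $\Breve{\sX}^n\subset\sX^n$ from it by an explicit construction. Since $\cG$ is a tree, the system \cref{E-Psi} is solved by leaf elimination, so each $\Psi_{ij}(\alpha,\beta)$ is an integer linear combination of the coordinates of $(\alpha,\beta)$ with coefficients bounded in absolute value by a constant depending only on $\cG$; in particular $\Psi$ carries integer points of $D_\Psi$ to integer matrices, and there is a constant $K=K(\cG)$ with $\norm{\Psi(\alpha,\beta)}\le K\norm{(\alpha,\beta)}$ for all $(\alpha,\beta)\in D_\Psi$. The anchor of the whole argument is the identity $\Psi(nx^*,n\nu)=nz^*$: indeed $(nx^*,n\nu)\in D_\Psi$ because $e\cdot x^*=e\cdot\nu$, while \cref{staticfluid} and \cref{critfluid} show that $z^*$ has row sums $(x^*_i)_{i\in\cI}$ and column sums $(\nu_j)_{j\in\cJ}$, so $nz^*$ solves \cref{E-Psi} at $(nx^*,n\nu)$ and uniqueness gives the claim.

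Now fix $x\in\Breve{\sX}^n$ and $q,y\in\varTheta^n(x)$. By the definition of $\varTheta^n(x)$ we have $e\cdot(x-q)=e\cdot(N^n-y)$, so $(x-q,N^n-y)\in D_\Psi$; moreover $q,y\ge0$ with at most one of $e\cdot q,\,e\cdot y$ nonzero, whence $\norm{q}+\norm{y}=\abs{e\cdot(x-N^n)}$. Since $e\cdot x^*=e\cdot\nu$, we get $\abs{e\cdot(x-N^n)}\le\norm{x-nx^*}+\norm{N^n-n\nu}\le M_0 n+\sorder(n)$, where we used $\norm{N^n-n\nu}=\sorder(n)$. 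Hence, by the triangle inequality,
\begin{equation*}
\bnorm{(x-q-nx^*,\,N^n-y-n\nu)}\;\le\;\norm{x-nx^*}+\norm{q}+\norm{y}+\norm{N^n-n\nu}\;\le\;2M_0 n+\sorder(n)\,.
\end{equation*}
Combining this with the anchor identity and the operator bound for $\Psi$,
\begin{equation*}
\babs{\Psi_{ij}(x-q,N^n-y)-nz^*_{ij}}\;\le\;K\,\bnorm{(x-q-nx^*,\,N^n-y-n\nu)}\;\le\;2KM_0 n+\sorder(n)\,.
\end{equation*}
Setting $c_0\df\min_{i\sim j}z^*_{ij}$, which is strictly positive by \cref{critfluid}, we obtain for $i\sim j$ that $\Psi_{ij}(x-q,N^n-y)\ge(c_0-2KM_0)n-\sorder(n)$, while $\Psi_{ij}\equiv0$ for $i\nsim j$. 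Choosing $M_0<c_0/(2K)$ makes the right-hand side positive for all $n$ past a threshold; since $(x-q,N^n-y)$ is integer-valued, $\Psi(x-q,N^n-y)\in\ZZ^{\cG}_+$, which is the second assertion. (The finitely many remaining indices $n$ are immaterial for the asymptotic analysis.)

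For the inclusion, fix $x\in\Breve{\sX}^n$ and choose $q,y\in\varTheta^n(x)$ with $y=0$ when $e\cdot(x-N^n)\ge0$ and $q=0$ otherwise, distributing the prescribed total $[e\cdot(x-N^n)]^{\pm}$ arbitrarily over nonnegative integer coordinates. Put $z\df\Psi(x-q,N^n-y)$; the second assertion gives $z\in\ZZ^{\cG}_+$, and by construction the row and column sums of $z$ yield $q_i(x,z)=q_i$ and $y^n_j(z)=y_j$ (cf.\ \cref{E-qy}). Because one of $q,y$ vanishes, the work-conservation identity $q_i(x,z)\wedge y^n_j(z)=0$ holds on every edge, so $z\in\cZn(x)$; and $e\cdot q\wedge e\cdot y=[e\cdot(x-N^n)]^+\wedge[e\cdot(x-N^n)]^-=0$, which is exactly \cref{E-JWC}. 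Thus $z$ witnesses $x\in\sX^n$.

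The step I expect to be the main obstacle is the bookkeeping that fixes the admissible size of $M_0$: one must verify that all four perturbations $x-nx^*$, $q$, $y$, and $N^n-n\nu$ are of size $\order(M_0 n)$ or $\sorder(n)$, and that the slack $z^*_{ij}$ available on each edge is genuinely of order $n$. The latter is precisely where the strict positivity $\xi^*_{ij}>0$ of the complete resource pooling condition \cref{critfluid} is indispensable. The only other delicate point is the integrality together with the uniform coefficient bound for $\Psi$, both of which rest on the leaf-elimination solution of \cref{E-Psi} afforded by the tree structure of $\cG$ (Proposition~A.2 of \cite{Atar-05a}).
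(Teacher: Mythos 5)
The first thing to note is that the paper contains no proof of this statement: \cref{L-JWC} is quoted verbatim from Lemma~3 of \cite{Atar-05b}, with only the remark that Atar's argument survives the weakening $N_j^n - n\nu_j = \sorder(n)$. So the honest comparison is with the cited source, and your proof is in substance a correct reconstruction of that argument: the anchor identity $\Psi(nx^*,n\nu)=nz^*$, the integrality and the uniform operator bound for $\Psi$ furnished by leaf elimination on the tree (Proposition~A.2 of \cite{Atar-05a}), the lower bound $z^*_{ij}\ge c_0>0$ on edges supplied by the complete resource pooling condition \cref{critfluid}, and the choice $M_0 < c_0/(2K)$ are exactly the ingredients used there. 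Your construction of the JWC action $z=\Psi(x-q,N^n-y)$ with one of $q$, $y$ identically zero is also the standard way the inclusion $\Breve{\sX}^n\subset\sX^n$ is deduced from the positivity assertion, and you use only $\norm{N^n-n\nu}=\sorder(n)$, which is precisely the point of the paper's remark preceding the lemma.

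One caveat deserves emphasis. Your estimates yield the conclusion only for $n$ beyond a finite threshold, whereas the statement reads ``for all $n\in\NN$.'' This cannot be fixed by shrinking $M_0$: if $nx^*$ happens to be an integer vector for some small $n$, then $nx^*\in\Breve{\sX}^n$ for every $M_0>0$, while the hypotheses constrain $N^n$ only asymptotically; for a nontrivial topology one can then pick $N^n$ at that single $n$ with $e\cdot N^n=e\cdot nx^*$ so that the unique tree-compatible matrix with margins $(nx^*,N^n)$, namely $\Psi(nx^*,N^n)$, has a negative entry --- and since $e\cdot q\wedge e\cdot y=0$ together with $e\cdot q-e\cdot y=0$ forces $q=y=0$, the JWC condition then fails at $nx^*$. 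In other words, under the stated hypotheses the literal quantifier is an imprecision of the quoted statement rather than a gap in your reasoning: what is provable, and what the paper actually uses (in \cref{L2.2}, \cref{DEJWC}, and the asymptotic optimality proofs, all of which concern limits as $n\to\infty$), is the inclusion for all sufficiently large $n$, which is what you establish.
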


\begin{remark}
\cref{L-JWC} implies that if $x\in\Breve{\sX}^{n}$, then 
for any $q\in\ZZ^I_+$ and $y\in\ZZ^J_+$ satisfying $e\cdot q\wedge e\cdot y=0$
and $e\cdot (x-q)= e\cdot (N^n-y)\ge0$, we have
$\Psi(x-q,N^n-y)\in\cZn(x)$.
\end{remark}

We need the following definition. 

\begin{definition}\label{DEJWC}
We fix some open ball $\Breve{B}$ centered at the origin,
such that $n (\Breve{B}+x^*)\subset \Breve{\sX}^n$ for all $n\in\NN$.
The \emph{jointly work conserving
action set $\bcZn(x)$ at $x$} is defined as the subset of $\cZn(x)$,
which satisfies
\begin{equation*}
\bcZn(x) \;\df\; \begin{cases}
\bigl\{ z\in\cZn(x)\,\colon\,~
e\cdot q(x,z) \wedge e \cdot y^n(z)\;=\;0\bigr\}
&\text{if\ \ } x\in n (\Breve{B}+x^*)\,,\\[3pt]
\cZn(x)&\text{otherwise,}
\end{cases}
\end{equation*}
with $q$ and $y^n$ as in \cref{E-qy}.
We also define the associated admissible policies by 
\begin{equation*}
\begin{split}
\Breve\fZ^n &\;\df\;
\bigl\{Z^{n}\in\fZ^n \,\colon\, Z^n(t)\in\bcZn\bigl(X^n(t)\bigr)
\;\;\;\forall\, t\ge0\bigr\}\,,\\[5pt]
\boldsymbol\fZ &\;\df\;\{Z^{n}\in\Breve\fZ^n\,,~n\in\NN\}\,.
\end{split}
\end{equation*}
We refer to the policies in $\boldsymbol\fZ$ as
\emph{eventually jointly work conserving} (EJWC).
\end{definition}

The ball $\Breve{B}$ is fixed in \cref{DEJWC} only for convenience.
We could instead adopt a more general definition of $\boldsymbol\fZ$,
as explained in Remark~2.1 in \cite{AP15}.  
The EJWC condition plays a crucial role in the derivation of the
controlled diffusion limit.
Therefore, the convergence of mean empirical measures of the
controlled diffusion-scaled state
process, and thus, also the lower and upper bounds for
asymptotic optimality are established for sequences
$\{Z^n,\,n\in\NN\}\subset\boldsymbol\fZ$. 

\subsection{The diffusion-scaled processes} \label{S2.3}

Let $x^*$ and $z^*$ be as in \cref{staticfluid}.
We define the diffusion-scaled processes
$\Hat{Z}^{n}$,
$\Hat{X}^{n}$,
$\Hat{Q}^{n}$, and $\Hat{Y}^{n}$, by
\begin{equation} \label{DiffDef}
\begin{aligned}
\Hat{X}^{n}_i(t) &\;\df\; \frac{1}{\sqrt{n}} (X_i^{n}(t) - n x^*_{i}) \,,  \\[5pt]
\Hat{Q}^{n}_i(t) &\;\df\; \frac{1}{\sqrt{n}} Q_i^{n}(t) \,,
\end{aligned}
\qquad
\begin{aligned}
\Hat{Z}^{n}_{ij}(t) &\;\df\; \frac{1}{\sqrt{n}} (Z_{ij}^{n}(t) - n z^*_{ij})\,,\\[5pt]
\Hat{Y}^{n}_j(t) &\;\df\; \frac{1}{\sqrt{n}} Y_j^{n}(t) \,.
\end{aligned}
\end{equation}

Let
\begin{equation*}
\begin{split}
\Hat{M}^{n}_{A, i}(t) &\;\df\;  \frac{1}{\sqrt{n}}(A^{n}_i(\lambda_i^{n} t)
- \lambda_i^{n} t), \\[5pt]
 \Hat{M}^{n}_{S, ij}(t) &\;\df\; \frac{1}{\sqrt{n}}\left( S^{n}_{ij}
 \left( \mu_{ij}^{n}\int_0^t Z_{ij}^{n}(s) \D{s} \right)
 - \mu_{ij}^{n}\int_0^t Z_{ij}^{n}(s) \D{s}\right) \,,\\[5pt]
\Hat{M}^{n}_{R, i}(t) &\;\df\;\frac{1}{\sqrt{n}}
\left(R_i^{n} \left(\gamma_i^{n} \int_0^t Q^{n}_i(s) \D{s} \right)
-\gamma_i^{n} \int_0^t Q^{n}_i(s) \D{s} \right)\,.
\end{split}
\end{equation*}
These are square integrable martingales w.r.t. the filtration $\bm{\mathscr{F}}^{n}$,
with quadratic variations
\begin{equation*}
\langle \Hat{M}^{n}_{A, i} \rangle(t) \;\df\; \frac{\lambda_i^{n}}{n} t\,, \quad 
\langle \Hat{M}^{n}_{S, ij} \rangle (t) \;\df\; \frac{\mu_{ij}^{n}}{n}
\int_0^t Z_{ij}^{n}(s)\D{s}\,,\quad 
\langle \Hat{M}^{n}_{R, i} \rangle (t)\;\df\;\frac{\gamma_i^{n}}{n}
\int_0^t Q^{n}_i(s)\D{s} \,. 
\end{equation*}
Let $\widehat{M}^n(t)\df\Hat{M}^{n}_{A, i} (t)
- \sum_{j \in \cJ(i)}\Hat{M}^{n}_{S, ij}(t) - \Hat{M}^{n}_{R, i}(t)$.
By \cref{Xrep}, we can write $\Hat{X}^{n}_i(t)$ as
\begin{equation} \label{hatXn-1}
\Hat{X}^{n}_i(t) \;=\; \Hat{X}^{n}_i(0) + \ell_i^{n} t
- \sum_{j \in \cJ(i)} \mu_{ij}^{n} \int_0^t \Hat{Z}^{n}_{ij}(s) \D{s}
- \gamma^{n}_i \int_0^t \Hat{Q}^{n}_i(s) \D{s}  \\[5pt]
+ \widehat{M}^n(t)\,, 
\end{equation}
where 
$\ell^{n} = (\ell^{n}_1,\dotsc,\ell^{n}_I)\transp$ is defined as
\begin{equation*}
\ell^{n}_i\;\df\; \frac{1}{\sqrt{n}} \bigg(\lambda^{n}_i
-  \sum_{j \in \cJ(i)} \mu_{ij}^{n} z_{ij}^* n \bigg) \,.
\end{equation*}
Under the assumptions on the parameters in
\cref{HWpara} and the first constraint in the LP,
it holds that
\begin{equation*}
\ell^{n}_i \;\xrightarrow[n\to\infty]{}\;
\ell_i\;\df\; \Hat{\lambda}_i - \sum_{j \in \cJ(i)} \Hat{\mu}_{ij} z_{ij}^* \,. 
\end{equation*}
We let $\ell\df(\ell_1,\dotsc,\ell_I)\transp$.

By \cref{staticfluid,baleq,DiffDef}, we obtain the balance equations
\begin{equation}\label{baleq-hat}
\begin{split}
& \Hat{X}^{n}_i(t) \;=\; \Hat{Q}_i^{n}(t) + \sum_{j \in \cJ(i)} \Hat{Z}^{n}_{ij}(t)
\qquad \forall\, i \in \cI\,, \\[5pt]
 & \Hat{Y}^{n}_j(t) + \sum_{i\in \cI(j)} \Hat{Z}^{n}_{ij}(t) \;=\; 0
 \qquad \forall\, j \in \cJ \,.
\end{split}
\end{equation}

\begin{definition}\label{D-hatx}
For each $x\in\ZZ^{I}_{+}$ and $z\in \cZn(x)$, we define 
\begin{equation}\label{Td-xn}
\begin{split}
\Tilde{x}^{n} \;=\;\Tilde{x}^{n}(x)\;\df\; x - n x^*\,, \qquad
 \Hat{x}^{n}\;=\;\Hat{x}^{n}(x)\;\df\; \frac{\Tilde{x}^{n}(x)}{\sqrt{n}}\,, \qquad
 \Hat{z}^n(z) \;\df\; \frac{ z - n z^*}{\sqrt{n}}\,,
 \\[5pt]
\Hat{q}^n(x,z)\;\df\; \frac{q(x,z)}{\sqrt{n}}\,,\qquad
\Hat{y}^n(z)\;\df\; \frac{y^n(z)}{\sqrt{n}}\,,\qquad
\Hat{\vartheta}^n(x,z)\;\df\; e\cdot \Hat{q}^n(x,z)\wedge e\cdot \Hat{y}^n(z)\,,
\end{split}
\end{equation}
with $q(x,z)$, $y^n(z)$ as in \cref{E-qy}.
We also let
\begin{equation*}
\sS^n \;\df\; \bigl\{\Hat{x}^{n}(x) \,\colon\,x\in\ZZ^I_{+}\bigr\}\,,
\qquad
\Breve\sS^n \;\df\; \bigl\{\Hat{x}^{n}(x) \,\colon\,x\in\Breve\sX^n\bigr\}
\end{equation*}
and
\begin{equation*}
\hcZn(\Hat{x}) \;\df\;
\{\Hat{z}^n(z)\,\colon\,z\in\cZn(\sqrt{n}\Hat{x}+n x^*)\}\, \qquad \Hat{x}\in\sS^n\,.
\end{equation*}
Abusing the notation, we also write
\begin{equation} \label{hatq-hatxz}
\Hat{q}^n_i(\Hat{x},\Hat{z})\;=\;
\Hat{q}^n_i(\Hat{x}^n,\Hat{z}^n) \;=\;
\Hat{x}_i^n - \sum_{j\in \cJ(i)} \Hat{z}^n_{ij} \quad \text{for} \quad i \in \cI\,,
\end{equation}
and
\begin{equation} \label{haty-hatz}
\Hat{y}^n_j(\Hat{z})\;=\; \Hat{y}^n_j(\Hat{z}^n) \;=\;
\frac{N^n_j- n \sum_{i\in \cI(j)} z_{ij}^*}{\sqrt{n}}
- \sum_{i \in \cI(j)} \Hat{z}^n_{ij} \quad \text{for} \quad j \in \cJ\,.
\end{equation}
\end{definition}

\begin{lemma}\label{L2.2}
There exists a constant $\Tilde{M}_0>0$ such that
for any $z\in\bcZn(x)$, $x\in\ZZ^I_+$, and $n\in\NN$, we have
\begin{equation*}
\max\;\biggl\{\max_{(i,j)\in\cE}\; \abs{\Hat{z}^n_{ij}(z)},\,
\norm{\Hat{q}^n(x,z)},\, \norm{\Hat{y}^n(z)},\,\Hat{\vartheta}^n(x,z)\biggr\}
\;\le\; \Tilde{M}_0\,\norm{\Hat{x}^n(x)}\,.
\end{equation*}
\end{lemma}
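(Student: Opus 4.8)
The plan is to first reduce the fourfold maximum to a single estimate $\norm{\Hat q^n(x,z)}+\norm{\Hat y^n(z)}\le C\norm{\Hat x^n(x)}$ with $C$ independent of $n$, and then to establish that estimate by treating separately the two regimes in the definition of $\bcZn$. For the reduction, note that $q(x,z)\ge0$ and $y^n(z)\ge0$ componentwise, so the $\ell^1$-norms are the coordinate sums, $\norm{\Hat q^n}=e\cdot\Hat q^n$ and $\norm{\Hat y^n}=e\cdot\Hat y^n$; in particular $\Hat\vartheta^n(x,z)=e\cdot\Hat q^n\wedge e\cdot\Hat y^n\le\min\{\norm{\Hat q^n},\norm{\Hat y^n}\}$, so $\Hat\vartheta^n$ requires no separate treatment. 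For the service variables I would use that $\cG$ is a tree: the balance identities \cref{hatq-hatxz,baleq-hat} read $\sum_{j}\Hat z^n_{ij}=\Hat x^n_i-\Hat q^n_i$ and $\sum_{i}\Hat z^n_{ij}=-\Hat y^n_j$, whose coordinate sums agree (both equal $e\cdot\Hat x^n - e\cdot\Hat q^n$), so this is precisely the consistent incidence system inverted by the linear map $\Psi$ of \cref{E-Psi}. Hence $\Hat z^n=\Psi(\Hat x^n-\Hat q^n,-\Hat y^n)$, and since $\Psi$ depends only on $\cG$ and not on $n$,
\begin{equation*}
\max_{(i,j)\in\cE}\,\abs{\Hat z^n_{ij}}\;\le\;\norm{\Hat z^n}\;\le\;\norm{\Psi}\bigl(\norm{\Hat x^n}+\norm{\Hat q^n}+\norm{\Hat y^n}\bigr)\,,
\end{equation*}
which reduces the lemma to the claimed bound on $\norm{\Hat q^n}+\norm{\Hat y^n}$.

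Suppose first that $x\in n(\Breve B+x^*)$, so that $z\in\bcZn(x)$ forces $e\cdot q\wedge e\cdot y^n=0$, and therefore $\Hat\vartheta^n=e\cdot\Hat q^n\wedge e\cdot\Hat y^n=0$. Combined with the consistency relation $e\cdot\Hat q^n-e\cdot\Hat y^n=e\cdot\Hat x^n$ noted above, one of the two aggregates vanishes: if $e\cdot\Hat q^n=0$ then $\Hat q^n=0$ and $\norm{\Hat y^n}=e\cdot\Hat y^n=-e\cdot\Hat x^n\le\abs{e\cdot\Hat x^n}\le\norm{\Hat x^n}$, and the case $e\cdot\Hat y^n=0$ is symmetric. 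Either way $\norm{\Hat q^n}+\norm{\Hat y^n}\le\norm{\Hat x^n}$ on the joint-work-conservation region.

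It remains to treat $x\notin n(\Breve B+x^*)$, where only $z\in\cZn(x)$ is assumed. Writing $\Breve r>0$ for the radius of $\Breve B$, this regime forces $\norm{\Hat x^n}\ge\abs{\Hat x^n}\ge\sqrt n\,\Breve r$, which is the crucial observation: the only a priori bounds available here are of order $\sqrt n$, but on this set $\sqrt n\le\norm{\Hat x^n}/\Breve r$, so they collapse to the desired linear-in-$\norm{\Hat x^n}$ bounds. Concretely, $z_{ij}\ge0$ gives $\Hat z^n_{ij}\ge-\sqrt n\,z^*_{ij}$, whence $0\le\Hat q^n_i\le\Hat x^n_i+\sqrt n\,x^*_i$ and $0\le\Hat y^n_j=-\sum_i\Hat z^n_{ij}\le\sqrt n\,\nu_j$; summing over $i$ and $j$ yields $\norm{\Hat q^n}\le\norm{\Hat x^n}+\sqrt n\,(e\cdot x^*)$ and $\norm{\Hat y^n}\le\sqrt n\,(e\cdot\nu)$. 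Substituting $\sqrt n\le\norm{\Hat x^n}/\Breve r$ gives $\norm{\Hat q^n}+\norm{\Hat y^n}\le C\norm{\Hat x^n}$, which together with the first paragraph completes the proof.

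The main obstacle is this second regime: once one leaves the ball $n(\Breve B+x^*)$ the aggregate identity $\Hat\vartheta^n=0$ is no longer available, and the natural bounds on the uncentered quantities are genuinely of size $\sqrt n$ rather than $O(\norm{\Hat x^n})$. The argument hinges on the geometric fact that merely leaving $n(\Breve B+x^*)$ already costs $\norm{\Hat x^n}\gtrsim\sqrt n$, so that these crude bounds become linear in $\norm{\Hat x^n}$, together with the tree structure of $\cG$, encoded in the bounded $n$-independent inverse $\Psi$, which is what converts aggregate control of $\Hat q^n,\Hat y^n$ into per-edge control of the $\Hat z^n_{ij}$.
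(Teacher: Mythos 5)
Your proof is correct and takes essentially the same route as the paper's: both arguments hinge on the dichotomy between the jointly-work-conserving region (where $\Hat{\vartheta}^n=0$ forces one of $\Hat{q}^n,\Hat{y}^n$ to vanish and the other to equal $\abs{e\cdot\Hat{x}^n}$), and its complement (where the crude $\order(\sqrt{n})$ bounds on $\Hat{q}^n,\Hat{y}^n,\Hat{\vartheta}^n$ become linear in $\norm{\Hat{x}^n}$ because leaving that region already costs $\norm{\Hat{x}^n}\gtrsim\sqrt{n}$), with the tree-structure map $\Psi$ of \cref{E-Psi} converting aggregate control into per-edge control of $\Hat{z}^n$. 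The only difference is where you split the cases — at $n(\Breve{B}+x^*)$, where \cref{DEJWC} actually imposes the JWC constraint, rather than at $\Breve{\sX}^n$ as the paper does — which is, if anything, the more careful choice.
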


\begin{proof}
Note that
\begin{equation}\label{EL2.2A}
\norm{\Hat{q}^n(x,z)}=\Hat{\vartheta}^n(x,z)+ \bigl(e\cdot \Hat{x}^{n}(x)\bigr)^+\,,
\quad\text{and}\quad
\norm{\Hat{y}^n(z)}=\Hat{\vartheta}^n(x,z)+ \bigl(e\cdot \Hat{x}^{n}(x)\bigr)^-
\end{equation}
for all $x\in\ZZ^{I}_{+}$ and $z\in \cZn(x)$.
Therefore, there exist probability vectors $p^c\in[0,1]^I$ and $p^s\in[0,1]^J$
such that
$\Hat{q}^{n}= \bigl(\Hat{\vartheta}^{n}+(e\cdot\Hat{x}^{n})^{+}\bigr)p^{c}$ 
and $\Hat{y}^{n}= \bigl(\Hat{\vartheta}^{n}+(e\cdot\Hat{x}^{n})^{-}\bigr)p^{s}$.
By the linearity of the map $\Psi$ and \cref{L-JWC}, it easily follows
that
\begin{equation}\label{EL2.2B}
\Hat{z}^{n}\;=\; \Psi(\Hat{x}^{n}-\Hat{q}^{n},- \Hat{y}^{n})
\;=\; \Psi\bigl(\Hat{x}^{n}-(e\cdot\Hat{x}^{n})^{+}p^{c},
-(e\cdot\Hat{x}^{n})^{-}p^{s}\bigr)
-\Hat{\vartheta}^{n}\,\Psi(p^c,p^s)\,.
\end{equation}

If $x\notin\Breve\sX^n$, then $\norm{\Hat{x}^n}>M_0 \sqrt{n}$
by \cref{E-BsX}.
Since for some constant $C>0$, 
it holds that $\norm{\Hat{y}^n(z)}\le C \sqrt{n}$ for all $n\in\NN$,
the same bound also holds for $\Hat{\vartheta}^n(x,z)$.
Thus if $x\notin\Breve\sX^n$, we obtain the bound asserted in
the lemma by \cref{EL2.2A,EL2.2B}.

On the other hand, if $x\in\Breve\sX^n$ and $z\in\bcZn(x)$,
then $\Hat{\vartheta}^n(x,z)=0$,
and again the assertion of the lemma follows by \cref{EL2.2A,EL2.2B}.
This completes the proof.
\end{proof}

\begin{definition}\label{D-scrA}
We define the operator 
$\mathscr{A}^n\colon\Cc^{2}(\RR^I)\to\Cc(\RR^I,\RR^{I\times J})$ by 
\begin{equation*}
\mathscr{A}^n f 
\bigl(\Hat{x},\Hat{z}\bigr) \;\df \; \sum_{i\,\in\,\cI}
\Bigl(\mathscr{A}^{n}_{i,1}(\Hat{x}_i,\Hat{z})\,\partial_i f(\Hat{x})
+\mathscr{A}^{n}_{i,2}(\Hat{x}_i,\Hat{z})\,\partial_{ii} f(\Hat{x}) \Bigr)\,,
\quad f\in\Cc^{2}(\RR^I)\,,
\end{equation*}
where $\partial_{i}\df\tfrac{\partial~}{\partial{x}_{i}}$ and
$\partial_{ij}\df\tfrac{\partial^{2}~}{\partial{x}_{i}\partial{x}_{j}}$, and
\begin{align*}
\mathscr{A}^{n}_{i,1}\bigl(\Hat{x}_i,\Hat{z}\bigr)&\;\df\;
\ell^{n}_{i} - \sum_{j \in \cJ(i)}\mu^{n}_{ij} \Hat{z}_{ij}-\gamma^{n}_{i}
\biggl(\Hat{x}_i- \sum_{j \in \cJ(i)}\Hat{z}_{ij}\biggr)\,,\\[5pt]
\mathscr{A}^{n}_{i,2}\bigl(\Hat{x}_i,\Hat{z}\bigr)&\;\df\;
\frac{1}{2}\biggl[\frac{\lambda^{n}_{i}}{n}
+\sum_{j\in\cJ(i)} \mu_{ij}^{n} z_{ij}^*
+\frac{1}{\sqrt{n}} \sum_{j\in\cJ(i)} \mu_{ij}^{n} \Hat{z}_{ij} 
+ \frac{\gamma^{n}_i}{\sqrt{n}} \biggl(\Hat{x}_i - \sum_{j \in \cJ(i)}
\Hat{z}_{ij}\biggr)\biggr]\,.
\end{align*}
\end{definition}

By the Kunita--Watanabe formula for semi-martingales
(see, e.g.,
\cite[Theorem~26.7]{kallenberg}), we have
\begin{equation}\label{E-KW}
f(\Hat{X}^{n}(t)) \;=\; f(\Hat{X}^{n}(0)) +
\int_{0}^{t} \mathscr{A}^n f  \bigl(\Hat{X}^{n}(s), \Hat{Z}^{n}(s)\bigr)\,\D{s}  
+\sum_{s\le t} \mathscr{D} f(\Hat{X}^n,s)\qquad\forall\, f\in\Cc^{2}(\RR^I)\,,  
\end{equation}
for any admissible diffusion-scaled policy $\Hat{Z}^{n}$, where 
\begin{multline}\label{E-scrD}
\mathscr{D} f(\Hat{X}^n,s)\;\df\;
\Delta f(\Hat{X}^{n}(s))
- \sum_{i\in\cI} \partial_{i}f(\Hat{X}^{n}(s-)) \Delta \Hat{X}^{n}_i(s)\\
 - \frac{1}{2} \sum_{i, i'\in\cI} \partial_{ii'}f(\Hat{X}^{n}(s-))
\Delta \Hat{X}^{n}_i(s) \Delta \Hat{X}_{i'}^{n}(s)\,.
\end{multline}

\subsubsection{Control parameterization}

\begin{definition} \label{D-upara}
Let $\boldsymbol{\mathfrak{X}}^n\df\{(\Hat{x},\Hat{z})\colon
\Hat{x}\in\sS^n\,,\ \Hat{z}\in\hcZn(\Hat{x})\}$.
For each $(\Hat{x},\Hat{z})\in\boldsymbol{\mathfrak{X}}^n$, we define
\begin{equation*}
u^{c}_i(\Hat{x}, \Hat{z}) \;=\; u^{c,n}_i(\Hat{x},\Hat{z}) \;\df\; \begin{cases}
\frac{\Hat{q}_i^n(\Hat{x},\Hat{z})}{e\cdot \Hat{q}^n(\Hat{x},\Hat{z})}
& \text{if~} e\cdot \Hat{q}^n(\Hat{x},\Hat{z})>0\,,\\
e_I&\text{otherwise,}\end{cases}
\qquad i\in\cI\,,\quad t\ge0\,,
\end{equation*}
and
\begin{equation*}
u^{s}_j(\Hat{z}) \;=\; u^{s,n}_j(\Hat{z}) \;\df\; \begin{cases}
\frac{\Hat{y}^n_j(\Hat{z})}{e\cdot \Hat{y}^n(\Hat{z})}
& \text{if~} e\cdot \Hat{y}^n(\Hat{z})>0\,,\\
e_J&\text{otherwise,}\end{cases}
\qquad j\in\cJ\,,\quad t\ge0\,.
\end{equation*}
Let $u(\Hat{x},\Hat{z})\df (u^c(\Hat{x},\Hat{z}),u^s(\Hat{z}))$.
Then $u(\Hat{x},\Hat{z})$ belongs to the set 
\begin{equation}\label{E-Act}
\Act \;\df\; \bigl\{ u= (u^c, u^s) \in \RR^{I}_+ \times \RR^J_+
\,\colon\, e\cdot u^c = e\cdot u^s=1\bigr\}\,.
\end{equation}
We also define the processes
\begin{equation*}
U^{c,n}_i(t)\;\df\; u^{c}_i\bigl(\Hat{X}^n(t), \Hat{Z}^n(t)\bigr)\,,\qquad
U^{s,n}_i(t)\;\df\; u^{s}_i\bigl(\Hat{Z}^n(t)\bigr)\,,
\end{equation*}
and $U^{n}\df (U^{c,n}, U^{s,n})$, with
$U^{c,n}\df (U^{c,n}_1,\dotsc, U^{c,n}_I)\transp$,
and $U^{s,n}\df (U^{s,n}_1,\dotsc, U^{s,n}_J)\transp$.
\end{definition}

The process $U^{c,n}_i(t)$ represents the proportion of the total queue length
in the network at queue $i$ at time $t$, while $U^{s,n}_j(t)$ represents the
proportion of the total idle servers in the network at station $j$ at time $t$.
Given $Z^{n} \in \fZ^{n}$ the process $U^{n}$ is uniquely determined
and lives in the set $\Act$.

For $u\in\Act$, let $\widehat{\Psi}[u]\colon\RR^I\to\RR^{\cG}$ be defined by
\begin{equation}\label{E-HatPsi}
\widehat{\Psi}[u](x) \;\df\; \Psi(x- (e\cdot x)^{+} u^c, - (e\cdot x)^{-} u^s)\,,
\end{equation}
where $\Psi$ is as in \cref{E-Psi}.

We define the operator
$\Breve{\mathscr{A}}^n\colon\Cc^{2}(\RR^I)
\to\Cc(\RR^I,\Act)$
by
\begin{equation*}
\Breve{\mathscr{A}}^{n}f(\Hat{x},u) \;\df\; \sum_{i\in\cI}
\Bigl(\Breve{\mathscr{A}}_{i,1}^{n}(\Hat{x},u)\,\partial_{i}f(\Hat{x})
+  \Breve{\mathscr{A}}_{i,2}^{n}(\Hat{x},u)\,\partial_{ii}f(\Hat{x}) \Bigr)\,,
\end{equation*}
where 
\begin{subequations} 
\begin{align*}
\Breve{\mathscr{A}}_{i,1}^{n}(\Hat{x},u) &\;\df\;
\ell^{n}_i -\sum_{j \in \cJ(i)} \mu_{ij}^{n}
\widehat{\Psi}_{ij}[u](\Hat{x}) - \gamma^{n}_i (e\cdot \Hat{x})^{+} u^c_i\,,\\[5pt]
\Breve{\mathscr{A}}_{i,2}^{n}(\Hat{x},u) &\;\df\;
\frac{1}{2}\Biggl(\frac{\lambda^{n}_{i}}{n}
+\sum_{j\in\cJ(i)} \mu_{ij}^{n}z^{*}_{ij}
+\frac{1}{\sqrt{n}} \sum_{j \in \cJ(i)} \mu_{ij}^{n}
\widehat{\Psi}_{ij}[u](\Hat{x})
+ \frac{\gamma^{n}_i}{\sqrt{n}} \bigl(  (e\cdot \Hat{x})^{+} u^c_i  \bigr) \Biggr)\,.
\end{align*}
\end{subequations}

The following lemma is a result of a simple calculation based on
the definitions above.
Recall the definitions of  $\Breve{B}$, $\bcZn$, and $\Breve\sS^n$
from \cref{DEJWC,D-hatx}.

\begin{lemma} \label{L2.3}
Let $u=u(\Hat{x},\Hat{z})\colon\boldsymbol{\mathfrak{X}}^n \to\Act$
denote the map given in \cref{D-upara}.
Then for $f\in\Cc^{2}_c(\sqrt{n}\Breve{B})$, we have
\begin{equation*}
\Breve{\mathscr{A}}^n f\bigl(\Hat{x}, u(\Hat{x},\Hat{z})\bigr)
= \mathscr{A}^nf\bigl(\Hat{x},\Hat{z})\,,\qquad \forall\,
\Hat{x}\in\Breve\sS^n\cap\sqrt{n}\Breve{B}\,,
\ \forall\,\Hat{z}\in  \bcZn(\sqrt{n}\Hat{x}+n x^*)\,.
\end{equation*}
\end{lemma}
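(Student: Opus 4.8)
The plan is to exploit that $\Breve{\mathscr{A}}^n f(\Hat{x}, u)$ and $\mathscr{A}^n f(\Hat{x}, \Hat{z})$ are both expansions in the \emph{same} partial derivatives $\partial_{i}f(\Hat{x})$ and $\partial_{ii}f(\Hat{x})$, evaluated at the \emph{same} point $\Hat{x}$. Consequently the asserted identity reduces to matching the scalar coefficients, i.e.\ to showing $\Breve{\mathscr{A}}_{i,1}^{n}(\Hat{x},u(\Hat{x},\Hat{z}))=\mathscr{A}^{n}_{i,1}(\Hat{x}_i,\Hat{z})$ and $\Breve{\mathscr{A}}_{i,2}^{n}(\Hat{x},u(\Hat{x},\Hat{z}))=\mathscr{A}^{n}_{i,2}(\Hat{x}_i,\Hat{z})$ for each $i\in\cI$. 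Comparing \cref{D-scrA} with the definition of $\Breve{\mathscr{A}}^n$, the two families of coefficients are assembled from identical terms save for the substitution of $\widehat{\Psi}_{ij}[u](\Hat{x})$ for $\Hat{z}_{ij}$, and of $(e\cdot\Hat{x})^{+}u^c_i$ for $\Hat{x}_i-\sum_{j\in\cJ(i)}\Hat{z}_{ij}=\Hat{q}^n_i(\Hat{x},\Hat{z})$ (recall \cref{hatq-hatxz}). Thus the whole statement reduces to the two pointwise identities
\begin{equation*}
\widehat{\Psi}_{ij}[u(\Hat{x},\Hat{z})](\Hat{x})\;=\;\Hat{z}_{ij}\quad\forall\,(i,j)\in\cE\,,
\qquad
(e\cdot\Hat{x})^{+}u^c_i(\Hat{x},\Hat{z})\;=\;\Hat{q}^n_i(\Hat{x},\Hat{z})\quad\forall\,i\in\cI\,.
\end{equation*}

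The key structural input is that the hypotheses place us squarely in the JWC region. First I would record the equivalence $\Hat{x}\in\sqrt{n}\Breve{B}\iff x=\sqrt{n}\Hat{x}+nx^*\in n(\Breve{B}+x^*)$, which follows since $\Hat{x}/\sqrt{n}=x/n-x^*$. By \cref{DEJWC} the set $n(\Breve{B}+x^*)$ lies in $\Breve{\sX}^{n}$, and there $\bcZn(x)$ is exactly the JWC-constrained action set; hence every $\Hat{z}\in\bcZn(x)$ satisfies $\Hat{\vartheta}^n(x,z)=e\cdot\Hat{q}^n\wedge e\cdot\Hat{y}^n=0$. With $\Hat{\vartheta}^n=0$ in hand, \cref{EL2.2A} gives $e\cdot\Hat{q}^n=\norm{\Hat{q}^n}=(e\cdot\Hat{x})^{+}$ and $e\cdot\Hat{y}^n=\norm{\Hat{y}^n}=(e\cdot\Hat{x})^{-}$, the first equality in each using the nonnegativity of the components of $\Hat{q}^n$ and $\Hat{y}^n$.

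I would then close out the two identities by unwinding \cref{D-upara}. When $e\cdot\Hat{q}^n>0$ we have $u^c_i=\Hat{q}^n_i/(e\cdot\Hat{q}^n)$, so $(e\cdot\Hat{x})^{+}u^c_i=(e\cdot\Hat{x})^{+}\Hat{q}^n_i/(e\cdot\Hat{x})^{+}=\Hat{q}^n_i$; when $e\cdot\Hat{q}^n=0$ the prefactor $(e\cdot\Hat{x})^{+}$ also vanishes and every $\Hat{q}^n_i=0$, so both sides are zero regardless of the default value $u^c=e_I$. The identical case analysis for $u^s$ yields $(e\cdot\Hat{x})^{-}u^s_j=\Hat{y}^n_j$. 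Substituting these two vector identities into \cref{E-HatPsi} gives $\widehat{\Psi}[u(\Hat{x},\Hat{z})](\Hat{x})=\Psi(\Hat{x}-\Hat{q}^n,-\Hat{y}^n)$, which is precisely the representation of $\Hat{z}^n$ recorded in \cref{EL2.2B}, establishing the first identity. The only point that needs care is the degenerate case in which $u^c$ or $u^s$ reverts to its default value $e_I$ or $e_J$; but this occurs exactly when the multiplying prefactor is zero, so it causes no trouble. Beyond this bookkeeping the proof is a direct substitution, and I anticipate no serious obstacle.
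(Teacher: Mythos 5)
Your proposal is correct and is exactly the ``simple calculation based on the definitions'' that the paper invokes without writing out: the reduction to coefficient matching, the observation that $\Hat{x}\in\sqrt{n}\Breve{B}$ together with $\Hat{z}\in\bcZn(\sqrt{n}\Hat{x}+nx^*)$ forces $\Hat{\vartheta}^n=0$, and the resulting identities $(e\cdot\Hat{x})^{+}u^c=\Hat{q}^n$, $(e\cdot\Hat{x})^{-}u^s=\Hat{y}^n$, and $\widehat{\Psi}[u(\Hat{x},\Hat{z})](\Hat{x})=\Psi(\Hat{x}-\Hat{q}^n,-\Hat{y}^n)=\Hat{z}$ via \cref{EL2.2A,EL2.2B}. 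Your handling of the degenerate cases ($e\cdot\Hat{q}^n=0$ or $e\cdot\Hat{y}^n=0$, where the default values $e_I$, $e_J$ are harmless because the prefactors vanish) is also the right bookkeeping, so there is nothing to add.
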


\subsection{The diffusion limit} \label{S2.4}

Consider the $I$-dimensional controlled diffusion given
by the It\^o equation 
\begin{equation} \label{E-diff}
d X_t\;=\;b(X_t, U_t)\,\D{t} + \Sigma \, \D W_t\,, 
\end{equation}
where $W$ is an $I$-dimensional standard Wiener process.
The drift $b\colon \RR^I \times \Act \to \RR^I$ takes the form
\begin{equation*}
b_i(x,u)\;=\;b_i(x, (u^c, u^s))\;\df\; \ell_i - \sum_{j \in \cJ(i)}
\mu_{ij} \widehat{\Psi}_{ij}[u](x)
- \gamma_i (e\cdot x)^{+} u^c_i  \qquad\forall\,i\in\cI\,,
\end{equation*}
where $\widehat{\Psi}_{ij}[u]$ is as in \cref{E-HatPsi}.
Also
$\Sigma\;\df\;\diag\bigl(\sqrt{2 \lambda_1}, \dotsc, \sqrt{2 \lambda_I}\bigr)$. 

The control process $U$ takes values in  $\Act$, defined in \cref{E-Act}, and
$U_{t}(\omega)$ is jointly measurable in
$(t,\omega)\in[0,\infty)\times\Omega$.
Moreover, it is \emph{non-anticipative}, i.e.,
for $s < t$, $W_{t} - W_{s}$ is independent of
\begin{equation*}
\sF_{s} \;\df\;\text{the completion of~} \sigma\{X_{0},U_{r},W_{r},\;r\le s\}
\text{~relative to~}(\sF,\Prob)\,.
\end{equation*}
Such a process $U$ is called an \emph{admissible control}.
Let $\Uadm$ denote the set of all admissible controls.
Recall that a control is called \emph{Markov} if
$U_{t} = v(t,X_{t})$ for a measurable map $v\colon\RR_{+}\times\RR^{I}\to \Act$,
and it is called \emph{stationary Markov} if $v$ does not depend on
$t$, i.e., $v\colon\RR^{I}\to \Act$.
Let $\Usm$ denote the set of stationary Markov controls.
Recall also that a control $v\in\Usm$ is called \emph{stable}
if the controlled process is positive recurrent.
We denote the set of such controls by $\Ussm$.
Let
\begin{equation}\label{E-generator}
\Lg^{u} f(x) \;\df\; \sum_{i\in\cI} \bigl[\lambda_i\,\partial_{ii} f(x)
+  b_{i}(x,u)\, \partial_{i} f(x)\bigr]\,,\quad u\in\Act\,,
\end{equation}
denote the extended controlled generator of the diffusion in \cref{E-diff}.

In \cite{AP15}, a leaf elimination algorithm was developed to obtain
an explicit expression for the drift $b(x,u)$.
This plays an important
role in understanding the recurrence properties of the controlled diffusion.
See also Remark~4.2 and Example~4.4 in \cite{AP15}.
We quote this result as follows.

\begin{lemma}[Lemma~4.3 in \cite{AP15}]
The drift $b(x, u)= b(x, (u^c, u^s))$ in the limiting diffusion $X$
in \cref{E-diff}
can be expressed as
\begin{equation} \label{E-drift2}
b(x, u) \;=\;\ell - B_1 (x -  (e\cdot x)^{+} u^c) + (e\cdot x)^{-} B_2 u^s
- (e \cdot x)^{+} \varGamma u^c\,,
\end{equation}
where $B_1$ is a lower-diagonal $I\times I$ matrix with positive diagonal
elements, $B_2$ is an $I \times J$ matrix and
$\varGamma= \diag\{\gamma_1,\dotsc, \gamma_I\}$.
\end{lemma}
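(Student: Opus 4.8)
The plan is to start from the defining formula $b_i(x,u)=\ell_i-\sum_{j\in\cJ(i)}\mu_{ij}\widehat{\Psi}_{ij}[u](x)-\gamma_i(e\cdot x)^+u^c_i$ and to read off the three terms of \cref{E-drift2} one at a time. The abandonment term is immediate: since $\varGamma=\diag\{\gamma_1,\dots,\gamma_I\}$ we have $\gamma_i(e\cdot x)^+u^c_i=(e\cdot x)^+(\varGamma u^c)_i$, which is exactly the last term. All the content is therefore in analyzing $\sum_{j\in\cJ(i)}\mu_{ij}\widehat{\Psi}_{ij}[u](x)$, where by \cref{E-HatPsi} one has $\widehat{\Psi}[u](x)=\Psi(\alpha,\beta)$ with $\alpha\df x-(e\cdot x)^+u^c$ and $\beta\df-(e\cdot x)^-u^s$. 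Note that $e\cdot\alpha=e\cdot x-(e\cdot x)^+=-(e\cdot x)^-$ and $e\cdot\beta=-(e\cdot x)^-$, so $(\alpha,\beta)\in D_\Psi$ and $\Psi(\alpha,\beta)$ is well defined.

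Next I would obtain an explicit expression for $\Psi_{ij}$ that is linear in $(\alpha,\beta)$. Because $\cG$ is a tree, deleting the edge $(i,j)$ splits it into two subtrees; let $T_{ij}\subset\cI\cup\cJ$ be the one containing class $i$. Summing the row identities in \cref{E-Psi} over the classes in $T_{ij}$ and subtracting the column identities over the pools in $T_{ij}$, every edge internal to $T_{ij}$ cancels and only the cut edge $(i,j)$ survives, giving $\Psi_{ij}(\alpha,\beta)=\sum_{k\in T_{ij}\cap\cI}\alpha_k-\sum_{l\in T_{ij}\cap\cJ}\beta_l$; this is exactly what the leaf elimination algorithm produces. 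Substituting and collecting the $\alpha$- and $\beta$-coefficients gives $\sum_{j\in\cJ(i)}\mu_{ij}\widehat{\Psi}_{ij}[u](x)=(B_1\alpha)_i-(e\cdot x)^-(B_2u^s)_i$, where $B_1=[a_{ik}]$ with $a_{ik}=\sum_{j\in\cJ(i)}\mu_{ij}\Ind_{T_{ij}}(k)$, the matrix $B_2$ is read off from the $\beta$-coefficients, and I used $\beta=-(e\cdot x)^-u^s$. Since $\alpha=x-(e\cdot x)^+u^c$, this already yields the middle term $+(e\cdot x)^-B_2u^s$ and the leading term $-B_1\bigl(x-(e\cdot x)^+u^c\bigr)$.

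The crux is to arrange that $B_1$ is lower-diagonal with strictly positive diagonal. Here I would exploit that $(\alpha,\beta)$ is constrained to $D_\Psi$: adding any multiple $c_i(e\cdot\alpha-e\cdot\beta)=0$ to the $i$-th component leaves the drift unchanged while replacing $a_{ik}\mapsto a_{ik}+c_i$ (and adjusting the $i$-th row of $B_2$, which is otherwise unconstrained). A direct computation from the cut formula gives $a_{ii}=\sum_{j\in\cJ(i)}\mu_{ij}$ and, for $k\ne i$, $a_{ik}=\sum_{j\in\cJ(i)}\mu_{ij}-\mu_{i\,j_0(i,k)}$, where $j_0(i,k)$ is the unique neighbour of $i$ on the tree-path to class $k$. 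I would then order the classes by a post-order depth-first traversal of $\cG$ rooted at an arbitrary pool, so that for every class $i$ all classes $k$ with $k>i$ are non-descendants of $i$ and are therefore reachable only through $i$'s parent pool $j^*(i)$. Consequently $j_0(i,k)=j^*(i)$ for all $k>i$, so $a_{ik}$ is constant in $k$ on the super-diagonal part; choosing $c_i\df\mu_{i\,j^*(i)}-\sum_{j\in\cJ(i)}\mu_{ij}$ forces $a_{ik}+c_i=0$ for every $k>i$ and makes the diagonal entry equal to $\mu_{i\,j^*(i)}>0$ (since $i\sim j^*(i)$). This produces the lower-diagonal $B_1$ with positive diagonal and finishes the proof. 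The main obstacle is precisely this last step: the cut formula by itself does not give a triangular matrix, and one must combine the $D_\Psi$-gauge freedom with a leaf-elimination (post-order) ordering so as to annihilate all super-diagonal entries while keeping the diagonal strictly positive.
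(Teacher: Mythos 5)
Your proof is correct. One caveat on the comparison: this paper does not actually prove the lemma — it quotes it from \cite{AP15}, where it is obtained by the recursive \emph{leaf elimination algorithm} — so the relevant benchmark is that argument. Your route is in substance a global, closed-form version of the same construction rather than a truly different one: the cut-set identity $\Psi_{ij}(\alpha,\beta)=\sum_{k\in T_{ij}\cap\cI}\alpha_k-\sum_{l\in T_{ij}\cap\cJ}\beta_l$ is exactly what leaf elimination computes edge by edge; a post-order DFS ordering of the classes (rooted at a pool) is exactly an order in which classes become leaves and are removed; and your parent pool $j^*(i)$ is the node $j_i$ of \cref{E-drift3}, so your diagonal entries $\mu_{i\,j^*(i)}$ agree with the $\mu_{ij_i}$ there. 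The ingredient you make explicit, which the recursive algorithm leaves implicit, is the gauge freedom on $D_\Psi$ — adding row-wise multiples of $e\cdot\alpha-e\cdot\beta=0$ to annihilate the super-diagonal entries; in the elimination argument this corresponds to the redundancy of the final balance equation on $D_\Psi$. You correctly verify the two points on which the whole thing hinges: that $(\alpha,\beta)=\bigl(x-(e\cdot x)^+u^c,\,-(e\cdot x)^-u^s\bigr)$ always lies in $D_\Psi$ (so the gauge modification is harmless), and that for non-descendants $k$ of $i$ the path leaves through the single edge $(i,j^*(i))$, making the super-diagonal coefficients constant in $k$ and hence removable by one scalar $c_i$ per row, while keeping $a_{ii}+c_i=\mu_{i\,j^*(i)}>0$. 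What your presentation buys is explicit formulas for the entries of $B_1$ and $B_2$ and a transparent account of why the representation \cref{E-drift2} is non-unique; what the recursive formulation in \cite{AP15} buys is the algorithmic bookkeeping (the maps $\Tilde{b}_i$, $\Tilde{F}_i$ and the nodes $j_i$ in \cref{E-drift3}) that is reused there for the stability analysis. Note also that, as in the cited lemma, the "lower-diagonal" claim is to be read relative to a suitable labeling of the classes — your post-order relabeling plays precisely the role of the elimination order.
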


The drift in \cref{E-drift2} takes the form
\begin{equation}\label{E-drift3}
b_{i}\bigl(x,u\bigr)\;=\; \ell_i- \mu_{ij_i} x_{i}
+ \Tilde{b}_{i}(x_{1},\dotsc,x_{i-1}) + \Tilde{F}_{i}\bigl((e\cdot x)^{+} u^c,(e\cdot x)^{-} u^s \bigr)
- \gamma_i\, (e\cdot x)^{+} u_i^c \,,
\end{equation}
where $j_i \in \cJ$, $i\sim j_i$,
is the unique server-pool node corresponding to $i$ when customer node $i$ 
is removed by the leaf elimination algorithm (see Section 4.1 in \cite{AP15}).   
Two things are important to note: (a) $\Tilde{F}_{i}$ is a linear function,
and (b) $\mu_{ij_i}>0$ (since $i\sim j_i$).

Under EJWC policies, convergence in distribution of the diffusion-scaled
processes $\Hat{X}^n$ to the
limiting diffusion $X$ in \cref{E-diff} follows by \cite[Proposition~3]{Atar-05b}
for certain classes of networks.
The fact that \cref{E-diff} can be viewed as a limit of the diffusion-scaled
process $\Hat{X}^n$ is also indicated
by the following lemma.

\begin{lemma} \label{L2.5}
We have
\begin{equation*}
\Breve{\mathscr{A}}_{i,1}^{n}\;\xrightarrow[n\to\infty]{}\;
b_i\,,\quad\text{and}\quad
\Breve{\mathscr{A}}_{i,2}^{n}\;\xrightarrow[n\to\infty]{}\;
\lambda_i
\end{equation*}
for $i\in\cI$, uniformly over compact sets of $\RR^I\times\Act$.
In particular, for any $f\in\Cc^2_c(\RR^I)$ it holds that
\begin{equation*}
\Breve{\mathscr{A}}^{n} f(x,u) \;\xrightarrow[n\to\infty]{}\;
\Lg^{u} f(x)\,.
\end{equation*}
\end{lemma}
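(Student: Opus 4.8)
The plan is to establish the convergence of the coefficient functions $\Breve{\mathscr{A}}_{i,1}^{n}$ and $\Breve{\mathscr{A}}_{i,2}^{n}$ directly from their explicit formulas in \cref{D-upara}, exploiting the parameter scaling limits in \cref{HWpara}, and then to deduce the operator convergence by noting that on any fixed $f\in\Cc^2_c(\RR^I)$ the generators are finite linear combinations of these coefficients paired with the (bounded, compactly supported) derivatives of $f$. The convergence $\Breve{\mathscr{A}}^{n} f\to\Lg^u f$ will then follow from the first part by the elementary fact that uniform convergence of coefficients on compacts, multiplied by fixed continuous functions of compact support, yields pointwise (indeed locally uniform) convergence of the sum.

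\textbf{First I would} record the ingredients that drive the limits. From \cref{HWpara} we have $\ell^n_i\to\ell_i$, $\mu^n_{ij}\to\mu_{ij}$, $\gamma^n_i\to\gamma_i$, and $\lambda^n_i/n\to\lambda_i$; moreover $\tfrac{1}{\sqrt n}\to 0$. The key structural point is that the linear map $\widehat{\Psi}[u](\Hat{x})$ is \emph{independent of $n$} — it is built from the fixed map $\Psi$ of \cref{E-Psi} — and is jointly continuous in $(\Hat{x},u)$, hence bounded on compact subsets of $\RR^I\times\Act$. Comparing
\begin{equation*}
\Breve{\mathscr{A}}_{i,1}^{n}(\Hat{x},u) \;=\;
\ell^{n}_i -\sum_{j \in \cJ(i)} \mu_{ij}^{n}
\widehat{\Psi}_{ij}[u](\Hat{x}) - \gamma^{n}_i (e\cdot \Hat{x})^{+} u^c_i
\end{equation*}
with the drift $b_i(x,u)=\ell_i-\sum_{j\in\cJ(i)}\mu_{ij}\widehat{\Psi}_{ij}[u](x)-\gamma_i(e\cdot x)^+u^c_i$ from \cref{S2.4}, the difference $\Breve{\mathscr{A}}_{i,1}^{n}-b_i$ is a finite sum of terms each of which is a scalar parameter-difference ($\ell^n_i-\ell_i$, $\mu^n_{ij}-\mu_{ij}$, or $\gamma^n_i-\gamma_i$) times a fixed function bounded on compacts. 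Since there are finitely many classes and edges, the supremum over any compact set of $\abs{\Breve{\mathscr{A}}_{i,1}^{n}-b_i}$ is bounded by a constant (the compact-set bound on $\abs{\widehat{\Psi}_{ij}}$, $\abs{(e\cdot\Hat x)^+}$, etc.) times $\max_{i,j}\{\abs{\ell^n_i-\ell_i},\abs{\mu^n_{ij}-\mu_{ij}},\abs{\gamma^n_i-\gamma_i}\}$, which tends to $0$. This gives the uniform-on-compacts convergence $\Breve{\mathscr{A}}_{i,1}^{n}\to b_i$.

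\textbf{Next} the second coefficient is handled identically but is even easier, since the leading (order-one) part of $\Breve{\mathscr{A}}_{i,2}^{n}$ is $\tfrac12\bigl(\tfrac{\lambda^n_i}{n}+\sum_{j\in\cJ(i)}\mu^n_{ij}z^*_{ij}\bigr)$ while the remaining terms carry an explicit $\tfrac{1}{\sqrt n}$ prefactor. On a compact set the functions $\widehat{\Psi}_{ij}[u](\Hat x)$ and $(e\cdot\Hat x)^+u^c_i$ are bounded, so those $\tfrac{1}{\sqrt n}$-terms vanish uniformly; combined with $\lambda^n_i/n\to\lambda_i$ (so $\tfrac{\lambda^n_i}{n}\to\lambda_i$ — here I would note that $\lambda^n_i/n\to\lambda_i$ follows from the first relation in \cref{HWpara}) and $\mu^n_{ij}\to\mu_{ij}$ with $\sum_{j}\mu_{ij}z^*_{ij}=\lambda_i$ via the LP constraint of \cref{critfluid,staticfluid}, the leading part converges to $\tfrac12(\lambda_i+\lambda_i)=\lambda_i$. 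This yields $\Breve{\mathscr{A}}_{i,2}^{n}\to\lambda_i$ uniformly on compacts. Finally, for fixed $f\in\Cc^2_c(\RR^I)$, each $\partial_i f$ and $\partial_{ii}f$ is continuous with compact support, hence bounded, and $\Breve{\mathscr{A}}^{n}f(x,u)-\Lg^u f(x)=\sum_i\bigl[(\Breve{\mathscr{A}}_{i,1}^{n}-b_i)\partial_i f+(\Breve{\mathscr{A}}_{i,2}^{n}-\lambda_i)\partial_{ii}f\bigr]$ is a finite sum of products of uniformly-small coefficients with bounded functions, which tends to $0$. \textbf{The only point requiring care} — the mild obstacle — is verifying that the leading constant in $\Breve{\mathscr{A}}_{i,2}^{n}$ reconstitutes $\lambda_i$ correctly, i.e.\ recognizing that $\sum_{j\in\cJ(i)}\mu_{ij}z^*_{ij}=\lambda_i$ is exactly the fluid balance constraint $\sum_{j}\mu_{ij}\nu_j\xi^*_{ij}=\lambda_i$ with $z^*_{ij}=\xi^*_{ij}\nu_j$; everything else is bookkeeping on finitely many vanishing parameter differences and bounded functions.
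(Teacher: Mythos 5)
Your proof is correct, and it is precisely the direct calculation the paper has in mind: the paper states \cref{L2.5} without proof, treating it (like \cref{L2.3}) as an immediate consequence of the definitions, the parameter limits in \cref{HWpara}, and the $n$-independence of $\widehat{\Psi}$. Your identification of the one nontrivial bookkeeping point, namely that $\sum_{j\in\cJ(i)}\mu_{ij}z^*_{ij}=\sum_{j}\mu_{ij}\nu_j\xi^*_{ij}=\lambda_i$ via \cref{critfluid,staticfluid}, so that the second coefficient limit is $\tfrac12(\lambda_i+\lambda_i)=\lambda_i$ in agreement with $\Sigma=\diag(\sqrt{2\lambda_1},\dotsc,\sqrt{2\lambda_I})$, is exactly right.
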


Nevertheless, for the time being, we consider solutions
of \cref{E-diff} as the formal limit of \cref{hatXn-1}.
Precise links of the $n^{\text{th}}$ system model
and \cref{E-diff} are established in \cref{S6}.

\begin{definition}\label{Lyapk} 
Let
$\norm{x}_\beta \df
(\beta_1\,\abs{x_1}^2+\dotsb+\beta_I\,\abs{x_I}^2)^{\nicefrac{1}{2}}$,
with $\beta=(\beta_1,\dotsc,\beta_I)$ a positive vector. 
Throughout the paper, $\Lyap_{\kappa,\beta}$, $\kappa\ge1$, stands for
a $\Cc^2(\RR^I)$ function which agrees with $\norm{x}_\beta^\kappa$ on the complement
of the unit ball $B$ in $\RR^I$, i.e.,
$\Lyap_{\kappa,\beta} (x)\;=\; \norm{x}_\beta^\kappa$,
for $x\in B^c$.
Also, $\widetilde\Lyap_{\epsilon,\beta}$, $\epsilon>0$, is defined by
\begin{equation*}
\widetilde\Lyap_{\epsilon,\beta}(x) \;\df\;
\exp\Bigl( \epsilon \norm{x}_\beta^2
\bigl(1+\norm{x}_\beta^2\bigr)^{-\nicefrac{1}{2}}\Bigr)\,,\qquad x\in\RR^I\,.
\end{equation*}

In addition, for $\delta>0$, we define
\begin{equation*}
\cK_{\delta}\;\df\; \bigl\{ x\in\RI\,\colon \abs{e\cdot x} > \delta \abs{x}\bigr\}\,.
\end{equation*}
\end{definition}

As shown in Theorem~4.1 of \cite{AP15}, the drift $b$ in \cref{E-drift3}
has the following important structural property.
For any $\kappa\ge1$,
there exists a function $\Lyap_{\kappa,\beta}$ as in \cref{Lyapk},
and positive constants $c_i$, $i=0,1,2$,
such that
\begin{equation}\label{E-structural}
\begin{split}
b(x,u)\cdot\nabla \Lyap_{\kappa,\beta}(x)\;\le\;
c_0 - c_1 \Lyap_{\kappa,\beta}(x)\,\Ind_{\cK^c_{\delta}}(x)
+ c_2 \Lyap_{\kappa,\beta}(x)\,\Ind_{\cK_{\delta}}(x)
\qquad\forall\,(x,u)\in\RR^I\times\Act\,.
\end{split}
\end{equation}
Since the diffusion matrix is constant, it is evident that a similar
estimate holds for $\Lg^u \Lyap_{\kappa,\beta}$ uniformly over $u\in\Act$.
By a straightforward application of It\^o's formula, this implies  that for
any $\kappa\ge1$ there exists a constant $C$ depending only on $\kappa$
such that (see \cite[Lemma~3.1\,(c)]{AP15})
\begin{equation}\label{E-BQYBS}
\Exp^{U}_{x}\biggl[\int_{0}^{T}\abs{X_{s}}^{\kappa}\,\D{s}\biggr]
\;\le\; C\, \abs{x}^\kappa
+ C\,\Exp^{U}_{x}\biggl[\int_{0}^{T}
\bigl(1+\abs{e\cdot X_s}\bigr)^{\kappa}\,\D{s}\biggr]
\qquad\forall\,T>0\,,\quad\forall\,U\in\Uadm\,.
\end{equation}

Moreover, it is shown in \cite[Theorem~4.2]{AP15} that
there exists a stationary Markov control $\Bar{v}\in\Usm$ satisfying 
\begin{equation}\label{E-stablev}
\Lg^{\Bar{v}} \Lyap_{\kappa,\beta}(x)\;\le\; \Bar{c}_0 - \Bar{c}_1
\Lyap_{\kappa,\beta}(x) \qquad\forall x\in\RR^I\,,
\end{equation}
for any $\kappa\ge1$, and positive constants $\Bar{c}_0$ and $\Bar{c}_1$
depending only on $\kappa$. 
As a consequence of \cref{E-stablev}, the diffusion under the control
$\Bar{v}$ is exponentially ergodic.
A slight modification of that proof leads to the following theorem.

\begin{theorem}\label{T2.1}
Provided that $\gamma_i>0$ for some $i\in\cI$,
there exist $\epsilon>0$, a positive vector $\beta\in\RR^I$,
 and a stationary Markov control $\Bar{v}\in\Usm$ satisfying 
\begin{equation}\label{E-stablev-exp}
\Lg^{\Bar{v}} \widetilde{\Lyap}_{\epsilon,\beta}(x)\;\le\; \tilde{c}_0 - \tilde{c}_1
\widetilde{\Lyap}_{\epsilon,\beta}(x) \qquad\forall x\in\RR^I\,,
\end{equation}
for  some positive constants $\tilde{c}_0$ and $\tilde{c}_1$.
\end{theorem}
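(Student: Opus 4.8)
The plan is to use the stationary Markov control $\Bar v\in\Usm$ already furnished by \cref{E-stablev} (that is, by \cite[Theorem~4.2]{AP15}) and to test the exponential function against the generator directly, thereby reducing the geometric drift inequality \cref{E-stablev-exp} to the linear drift estimate \cref{E-stablev} at $\kappa=1$. Write $\widetilde{\Lyap}_{\epsilon,\beta}=\exp(\epsilon h)$ with $h(x)\df\norm{x}_\beta^2\bigl(1+\norm{x}_\beta^2\bigr)^{-\nicefrac12}$, which is a smooth function of $\norm{x}_\beta^2$ and hence lies in $\Cc^2(\RR^I)$. Because $\Lg^{u}$ has the constant diffusion coefficients $\lambda_i$ (see \cref{E-generator}), a direct application of the chain rule gives, for every $u\in\Act$,
\begin{equation*}
\Lg^{u}\widetilde{\Lyap}_{\epsilon,\beta}(x)
\;=\;\widetilde{\Lyap}_{\epsilon,\beta}(x)\,
\Bigl[\epsilon\,\Lg^{u} h(x)+\epsilon^2\sum_{i\in\cI}\lambda_i\bigl(\partial_i h(x)\bigr)^2\Bigr]\,.
\end{equation*}
It therefore suffices to bound the bracketed expression at $u=\Bar v(x)$ from above by a quantity that diverges to $-\infty$ as $\abs{x}\to\infty$.

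The decisive feature of this particular exponent is that it grows only linearly: one computes $h(x)=\norm{x}_\beta+\order(\norm{x}_\beta^{-1})$, the gradient $\nabla h$ is bounded on all of $\RR^I$, and $\partial_{ii}h(x)\to0$ as $\abs{x}\to\infty$. Hence the diffusion-generated term is globally bounded, $\epsilon^2\sum_{i\in\cI}\lambda_i(\partial_i h)^2\le \epsilon^2 L$ for a constant $L$ independent of $x$. For the first-order term, on $B^c$ the function $h$ coincides with $\Lyap_{1,\beta}=\norm{x}_\beta$ up to a correction of order $\norm{x}_\beta^{-1}$, with $\nabla h=\nabla\norm{x}_\beta+\order(\norm{x}_\beta^{-2})$; since the drift $b$ grows at most linearly and $\partial_{ii}h=\order(\norm{x}_\beta^{-1})$, this yields $\Lg^{\Bar v}h=\Lg^{\Bar v}\Lyap_{1,\beta}+\order(\norm{x}_\beta^{-1})$, so the estimate \cref{E-stablev} at $\kappa=1$ transfers to $h$: there are positive constants $c_0'$ and $c_1'$ with
\begin{equation*}
\Lg^{\Bar v}h(x)\;\le\; c_0'-c_1'\,\norm{x}_\beta\qquad\text{for all } x\in\RR^I\,.
\end{equation*}

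This is exactly the step where the hypothesis $\gamma_i>0$ for some $i\in\cI$ is needed. On the cone $\cK_\delta\cap\{e\cdot x>0\}$ one has $\norm{x}_\beta\lesssim e\cdot x$, and the control $\Bar v$ (built by the spatial-truncation argument of \cite[Theorem~4.2]{AP15}) directs the total queue $(e\cdot x)^+$ onto a class with strictly positive abandonment rate, so that the term $-\gamma_i(e\cdot x)^+\Bar v^c_i$ in \cref{E-drift3} supplies a \emph{linear} restoring force there; on $\cK_\delta\cap\{e\cdot x<0\}$ the restoring comes from the idleness terms in \cref{E-drift3}, and on $\cK^c_\delta$ from the structural estimate \cref{E-structural}. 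Combining the two bounds,
\begin{equation*}
\Lg^{\Bar v}\widetilde{\Lyap}_{\epsilon,\beta}(x)
\;\le\;\widetilde{\Lyap}_{\epsilon,\beta}(x)\,
\bigl[\epsilon c_0'+\epsilon^2 L-\epsilon c_1'\,\norm{x}_\beta\bigr]\,.
\end{equation*}
For any fixed $\epsilon>0$ the bracket tends to $-\infty$, so there exist $R>0$ and $\tilde c_1>0$ for which it is $\le-\tilde c_1$ on $\{\norm{x}_\beta\ge R\}$, giving \cref{E-stablev-exp} there; on the compact set $\{\norm{x}_\beta<R\}$ both $\widetilde{\Lyap}_{\epsilon,\beta}$ and $\Lg^{\Bar v}\widetilde{\Lyap}_{\epsilon,\beta}$ are bounded, so \cref{E-stablev-exp} follows after enlarging the additive constant to a suitable $\tilde c_0$.

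The main obstacle is the transfer step: one must verify that the \emph{linear} restoring rate $c_1'>0$ survives for $h$ uniformly over the control values and, crucially, over the queue-dominated cone $\cK_\delta\cap\{e\cdot x>0\}$, which is precisely where abandonment is indispensable and where the modification of the argument in \cite{AP15} is concentrated. By contrast, the boundedness of $\nabla h$, the decay of $\partial_{ii}h$, and the compact-set estimate are routine. It is worth emphasizing that no smallness of $\epsilon$ is required: because $h$ is only asymptotically linear, the destabilizing $\epsilon^2$ diffusion term stays bounded and is absorbed by the linear drift, which is the reason the exponent is chosen to grow like $\norm{x}_\beta$ rather than like $\norm{x}_\beta^2$.
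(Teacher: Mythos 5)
Your argument is correct, but it reaches \cref{E-stablev-exp} by a genuinely different route than the paper. The paper's proof is by reference: it asserts that \cref{T2.1} follows from ``a slight modification'' of the proof of \cite[Theorem~4.2]{AP15}, i.e., one re-runs the leaf-elimination-based drift computation that constructs $\Bar{v}$, but with the exponential test function $\widetilde{\Lyap}_{\epsilon,\beta}$ in place of $\Lyap_{\kappa,\beta}$ (in the same style as the computation carried out for the pre-limit process in \cref{P5.1}). You instead take the conclusion \cref{E-stablev} at $\kappa=1$ as a black box and upgrade the linear drift to a geometric one through the identity $\Lg^{u}\E^{\epsilon h}=\E^{\epsilon h}\bigl[\epsilon\Lg^{u}h+\epsilon^{2}\sum_{i\in\cI}\lambda_{i}(\partial_{i}h)^{2}\bigr]$ for the exponent $h(x)=\norm{x}_\beta^{2}\bigl(1+\norm{x}_\beta^{2}\bigr)^{-\nicefrac{1}{2}}$; this works precisely because the diffusion matrix is constant and $h$ is asymptotically linear with globally bounded gradient, and the transfer of \cref{E-stablev} from $\Lyap_{1,\beta}$ to $h$ is pure calculus, since the two functions and their gradients differ by $\order(\norm{x}_\beta^{-1})$ and $\order(\norm{x}_\beta^{-2})$ respectively while $b$ grows at most linearly, uniformly over $\Act$. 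Your route is more modular: it never reopens the construction in \cite{AP15}, and it isolates a general principle---a linear Foster--Lyapunov bound for a Lipschitz coercive function plus additive noise yields an exponential one, for any $\epsilon>0$---which is what the asymptotically linear choice of exponent is designed to exploit. What the paper's route buys is that no intermediate polynomial drift statement is needed (which matters in \cref{P5.1}, where no such statement is available to black-box). The only quibble with your write-up is expository: you locate the use of $\gamma_i>0$ at the ``transfer step'', but in your argument that step is control-free calculus; the hypothesis is consumed entirely by the existence of $\Bar{v}$ satisfying \cref{E-stablev}, i.e., by \cite[Theorem~4.2]{AP15}, and your cone-by-cone description of the restoring force is a recap of that cited proof rather than something your reduction actually requires.
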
 
The properties in \cref{E-structural,E-stablev,E-stablev-exp}
are instrumental in showing that the optimal control problems
defined in this paper are well posed.

\section{Ergodic Control Problems}
In this section, we consider two control objectives, which address the queueing (delay)
and/or idleness costs in the system:  (i) \emph{unconstrained problem},
minimizing the queueing and idleness cost and
(ii) \emph{constrained problem}, minimizing the queueing cost while
imposing a constraint on idleness.
We state both problems for the $n^{\rm th}$ system and the limiting diffusion.

\subsection{Ergodic control problems for the \texorpdfstring{$n^{\rm th}$}{nth} system}
\label{S3.1}

The running cost is a function of the diffusion-scaled processes,
which are related to the unscaled ones by \cref{DiffDef}.
For simplicity, in all three cost minimization problems, 
we assume that the initial condition $X^{n}(0)$ is deterministic
and $\Hat{X}^{n}(0) \to x \in \RR^I$ as $n \to \infty$.  
Let the running cost $\Hat{r}\colon \RR^I_{+}\times \RR^J_+ \to \RR_+$ be defined by
\begin{equation} \label{runcost-ex}
\Hat{r}(\Hat{q},\Hat{y})\;=\;
\sum_{i\in\cI} \xi_i \Hat{q}_i^m + \sum_{j\in\cJ} \zeta_j \Hat{y}_j^m\,,
\quad \Hat{q} \in \RR^{I}_{+}\,, \;\Hat{y} \in \RR^J_+\,,
\quad\text{for some~} m\ge 1\,, 
\end{equation}
where 
$\xi=(\xi_1,\dotsc,\xi_I)\transp$ is a positive vector
and $\zeta=(\zeta_1,\dotsc,\zeta_J)\transp$ is
a nonnegative vector. In the case $\zeta\equiv 0$,
only the queueing cost is minimized. 
We denote by $\Exp^{Z^n}$ the expectation operator under
an admissible policy $Z^n$.

\begin{itemize}
\item[\textbf{(P1)}] (\emph{unconstrained problem})
The running cost penalizes the queueing and idleness. 
Let $\Hat{r}(q,y)$ be the running cost function as defined in \cref{runcost-ex}.
Here $\zeta>0$.
Given an initial state $X^{n}(0)$, and an admissible scheduling policy
$Z^{n} \in \Breve\fZ^{n}$,
 we define the diffusion-scaled cost criterion by
\begin{equation} \label{cost-ds}
J\bigl(\Hat{X}^{n}(0), Z^{n}\bigr) \;\df\;
\limsup_{T \to \infty}\;\frac{1}{T}\;
\Exp^{Z^n} \left[\int_{0}^{T}
\Hat{r}\bigl(\Hat{Q}^{n}(s),\Hat{Y}^{n}(s)\bigr)\,\D{s}\right]\,.
\end{equation} 
The associated cost minimization problem becomes 
\begin{equation*} 
\Hat{V}^{n}(\Hat{X}^{n}(0)) \;\df\;
\inf_{Z^{n} \in \Breve\fZ^{n} } J \bigl(\Hat{X}^{n}(0), Z^{n}\bigr)\,.  
\end{equation*} 

\item[\textbf{(P2)}] (\emph{constrained problem})
The objective here is to minimize the queueing cost while
imposing idleness constraints on the server pools.
Let $\Hat{r}_{\mathsf{o}}(q)$ be the
running cost function corresponding to  $\Hat{r}$ in \cref{runcost-ex} with
$\zeta \equiv 0$.
The diffusion-scaled cost criterion
$J_{\mathsf{o}}\bigl(\Hat{X}^{n}(0), Z^{n}\bigr)$ is defined
analogously to \cref{cost-ds} with running cost
$\Hat{r}_{\mathsf{o}}(\Hat{Q}^{n}(s))$, that is, 
\begin{align*} 
J_{\mathsf{o}}\bigl(\Hat{X}^{n}(0), Z^{n}\bigr)
&\;\df\;\limsup_{T \to \infty}\;\frac{1}{T}\;
\Exp^{Z^n} \left[\int_{0}^{T}
\Hat{r}_{\mathsf{o}}\bigl(\Hat{Q}^{n}(s)\bigr)\,\D{s}\right]\,.\\
\intertext{Also define}
J_{\mathsf{c},j}\bigl(\Hat{X}^{n}(0), Z^{n}\bigr)
&\;\df\;\limsup_{T \to \infty}\;\frac{1}{T}\;
\Exp^{Z^n} \left[\int_{0}^{T}
\bigl(\Hat{Y}^{n}_j(s)\bigr)^{\Tilde{m}}\,\D{s}\right]\,,\qquad j\in\cJ\,,
\end{align*}
with $\Tilde{m}\ge1$.
The associated cost minimization problem becomes 
\begin{align} 
\Hat{V}^{n}_{\mathsf{c}}(\Hat{X}^{n}(0)) &\;\df\;
\inf_{Z^{n} \in \Breve\fZ^{n}} J_{\mathsf{o}}\bigl(\Hat{X}^{n}(0), Z^{n}\bigr)\,,
\nonumber\\[5pt]
&\text{subject to}\quad
J_{\mathsf{c},j}\bigl(\Hat{X}^{n}(0), Z^{n}\bigr)
\;\le\; \updelta_j \,, \quad j \in\cJ \,,\label{constraint-ds}
\end{align}
where $\updelta = (\updelta_1, \dotsc,\updelta_J)\transp$ is a positive vector.
\end{itemize}

We refer to $\Hat{V}^{n}(\Hat{X}^{n}(0))$ and
$\Hat{V}^{n}_{\mathsf{c}}(\Hat{X}^{n}(0))$ as the diffusion-scaled optimal values
for the $n^{\rm th}$ system given the initial state $X^{n}(0)$,
for (P1) and (P2), respectively.

\begin{remark}
We choose running costs of the form \cref{runcost-ex}
mainly to simplify the exposition.
However, all the results of this paper still hold
for more general classes of functions.
Let $h_{\mathsf{o}}\colon\RR^I\to\RR_+$ be a convex function satisfying
$h_{\mathsf{o}}(x)\ge c_{1}\abs{x}^m + c_{2}$ for some $m\ge1$ and constants
$c_1>0$ and $c_2\in\RR$, and
$h\colon\RR^I\to\RR_+$, $h_i\colon\RR\to\RR_+$, $i\in\cI$, be convex functions
that have at most polynomial growth.
Then we can choose
$\Hat{r}(q,y)=h_{\mathsf{o}}(q) + h(y)$ for the unconstrained problem,
and $h_i(y_i)$ as the functions in the constraints in \cref{constraint-ds}
(with $\Hat{r}_{\mathsf{o}}=h_{\mathsf{o}}$).
Analogous running costs can of course be used in the corresponding
control problems for the limiting diffusion, which are presented later
in \cref{S3.2}.
\end{remark}

\subsection{Ergodic control problems for the limiting diffusion}\label{S3.2}

We state the two problems which correspond to (P1)--(P2)
in \cref{S3.1}
for the controlled diffusion in \cref{E-diff}.
Let $r\colon \RR^I \times \Act \to \RR$ be defined by
\begin{equation*}
r(x, u)\;=\;r\bigl(x,(u^c, u^s)\bigr)\;\df\; \Hat{r}\bigl((e\cdot x)^{+}u^c,
(e\cdot x)^{-}u^s\bigr)\,,
\end{equation*}
with $\Hat{r}$ as in \cref{runcost-ex}, that is, 
\begin{equation}\label{E-cost}
r(x, u) \;=\; [(e\cdot x)^{+}]^m \sum_{i\in\cI} \xi_i (u^c_i)^m
+  [(e\cdot x)^{-}]^m \sum_{j\in\cJ} \zeta_j (u^s_j)^m, \quad m\ge 1\,,
\end{equation}
for the given $\xi=(\xi_1,\dotsc, \xi_I)\transp$ 
and $\zeta=(\zeta_1,\dotsc,\zeta_J)\transp$ in \cref{runcost-ex}. 
Let the ergodic cost associated with the controlled diffusion $X$ and the
running cost $r$ be defined as
\begin{equation*}
J_{x,U}[r] \;\df\; \limsup_{T \to \infty}\;\frac{1}{T}\;\Exp_x^U
\left[ \int_{0}^{T} r(X_t, U_t)\,\D{t} \right]\,, \quad U \in \Uadm\,. 
\end{equation*}
\begin{itemize}
\item[\textbf{(P1$\bm'$)}]
(\emph{unconstrained problem})
The running cost function $r(x,u)$ is as in
\cref{E-cost} with $\zeta>0$. The ergodic control problem is then defined as
\begin{equation*}
\varrho^*(x) \;\df\; \inf_{U \in \Uadm} \;J_{x,U}[r] \,.
\end{equation*}

\item[\textbf{(P2$\bm'$)}]
(\emph{constrained problem})
The running cost function $r_{\mathsf{o}}(x,u)$ is as in
\cref{E-cost} with $\zeta\equiv 0$.
 Also define
\begin{equation}\label{E-rj}
r_j(x,u)\;\df\; [(e\cdot x)^{-}u^s_j]^{\Tilde{m}}\,,\quad j\in\cJ\,,
\end{equation}
with $\Tilde{m}\ge1$,
and let $\updelta=(\updelta_1,\dotsc, \updelta_J)$ be a positive vector.
The ergodic control problem under idleness constraints is defined as 
\begin{equation*}
\varrho_{\mathsf{c}}^{*}(x) \;\df\; \inf_{U \in \Uadm} \;J_{x,U}[r_{\mathsf{o}}]\,,
\qquad
\text{subject to}\quad
J_{x,U}[r_{j}] \;\le\; \updelta_j\,,\quad j \in\cJ\,.
\end{equation*}
\end{itemize}

The quantities $\varrho^*(x)$ and $\varrho^*_{\mathsf{c}}(x)$  are called
the optimal values of
the ergodic control problems (P1$'$) and (P2$'$), respectively,
for the controlled diffusion process $X$ with initial state $x$.  
Note that as is shown in Section~3 of \cite{ABP14} and
Sections~3 and~5.4 of \cite{AP15},
the optimal values $\varrho^*(x)$ and  $\varrho^*_{\mathsf{c}}(x)$
do not depend on $x\in\RR^{I}$,
and thus we remove this dependence in the results stated in \cref{S3.3}.

Let
$\eom$ denote the set of ergodic occupation measures corresponding to controls
in $\Ussm$, that is, 
\begin{equation*}
\eom\;\df\;\biggl\{\uppi\in\cP(\RR^{I}\times\Act)\,\colon\,
\int_{\RR^{I}\times\Act}\Lg^{u} f(x)\,\uppi(\D{x},\D{u})=0\quad
\forall\,f\in\Cc^{\infty}_c(\RR^{I}) \biggr\}\,,
\end{equation*}
where $\Lg^{u}f(x)$ is the controlled extended generator of the diffusion $X$
given in \cref{E-generator}.  
The restriction of the ergodic control problem with running cost $r$ to
stable stationary Markov controls is equivalent
to minimizing
$\uppi(r)\;=\;\int_{\RR^{I}\times\Act} r(x,u)\,\uppi(\D{x},\D{u})$
over all $\uppi\in\eom$.
If the infimum is attained in $\eom$, then we say that the ergodic control
problem is \emph{well posed}, and we refer to
any $\Bar\uppi\in\eom$ that attains this infimum
as an \emph{optimal ergodic occupation measure}.

The characterization of the optimal solutions to the ergodic
control problems (P1$'$)--(P2$'$)
has been thoroughly studied in \cite{ABP14} and \cite{AP15}.
We refer the reader to  these papers for relevant results used
in the proof of
asymptotic optimality which follows in the next section.

\subsection{Asymptotic optimality results} \label{S3.3}

We summarize here the main results on asymptotic optimality, which assert
that the values of the two ergodic control
problems in the diffusion scale converge to the values of the corresponding
ergodic control problems for the limiting diffusion, respectively. 
The proofs of the asymptotic optimality are given in
\cref{S-LUB}.  

Recall the definitions of $J$, $J_{\mathsf{o}}$, $\Hat{V}^{n}$, and
$\Hat{V}^{n}_{\mathsf{c}}$ in (P1)--(P2),
and the definitions of $\varrho^*$ and $\varrho^{*}_{\mathsf{c}}$ in (P1$'$)--(P2$'$).

\begin{theorem}[unconstrained problem] \label{T3.1}
Suppose that $\gamma_i>0$ for some $i\in\cI$.
Then the following are true.
\begin{itemize}
\item[\upshape{(}i\upshape{)}]  $($lower bound$)$
For any sequence $\{Z^{n},\;n\in\NN\}\subset\boldsymbol\fZ$, 
the diffusion-scaled cost in \cref{cost-ds} satisfies
\begin{equation*}
\liminf_{n\to\infty}\;
J\bigl(\Hat{X}^{n}(0),\Hat{Z}^{n}\bigr) \;\ge\; \varrho^*\,.
\end{equation*}

\smallskip

\item[\upshape{(}ii\upshape{)}]  $($upper bound$)$
$\displaystyle\limsup_{n\to\infty}\;\Hat{V}^{n}(\Hat{X}^{n}(0))
\;\le\; \varrho^*$\,.
\end{itemize}
\end{theorem}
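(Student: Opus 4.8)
The plan is to prove the lower and upper bounds separately, using the convergence of mean empirical measures to ergodic occupation measures of the limiting diffusion as the central mechanism. For the \emph{lower bound} (i), I would proceed by the now-standard argument sketched in the introduction and employed in \cite{ABP14,AP16}. Fix a sequence $\{Z^n\}\subset\boldsymbol\fZ$ and assume without loss of generality that $\liminf_n J(\Hat{X}^n(0),\Hat{Z}^n)$ is finite (otherwise there is nothing to prove), and pass to a subsequence realizing the liminf. Define the mean empirical measures $\Phi^n_T$ on $\RR^I\times\Act$ by averaging the law of $(\Hat{X}^n_t,U^n_t)$ over $[0,T]$, where $U^n$ is the control parameterization from \cref{D-upara}. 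The first step is to establish tightness of the family $\{\Phi^n_T\}$; this follows from the moment bounds, in particular from the $n^{\text{th}}$-system analogue of \cref{E-BQYBS} established in \cref{P6.1}, together with the bound in \cref{L2.2} that controls $\Hat{q}^n,\Hat{y}^n$ by $\norm{\Hat{x}^n}$. The second step is to show that any subsequential limit $\uppi$ of $\Phi^n_T$ (as first $n\to\infty$ along appropriate $T=T_n\to\infty$, or via a diagonal argument) is an ergodic occupation measure, i.e.\ $\uppi\in\eom$; this is precisely the content of \cref{L6.1}, and it rests on the generator convergence $\Breve{\mathscr{A}}^n f\to\Lg^u f$ of \cref{L2.5} together with the EJWC property that makes \cref{L2.3} applicable. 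Finally, lower semicontinuity of the running cost $r$ in \cref{E-cost} and the definition $\varrho^*=\inf_{\uppi\in\eom}\uppi(r)$ give
\begin{equation*}
\liminf_{n\to\infty} J\bigl(\Hat{X}^n(0),\Hat{Z}^n\bigr)\;\ge\;\uppi(r)\;\ge\;\varrho^*\,.
\end{equation*}

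For the \emph{upper bound} (ii), the goal is to exhibit, for each $\epsilon>0$, a sequence of admissible policies whose costs converge to something within $\epsilon$ of $\varrho^*$. I would invoke the spatial-truncation result: by \cref{L7.1}, \cref{C7.1}, and \cite[Theorem~4.2]{ABP14} there is a continuous precise stationary Markov control $\Bar{v}_\epsilon$ that is $\epsilon$-optimal for (P1$'$) and under which the limiting diffusion is exponentially ergodic. The key construction is the concatenated policy of \cref{D6.1}: inside the JWC region one translates $\Bar{v}_\epsilon$ into a scheduling policy canonically (using the map $\widehat\Psi$ of \cref{E-HatPsi} and \cref{L-JWC} to guarantee admissibility), while outside the JWC region one switches to a fixed BSP. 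By \cref{P5.1} the BSP renders $\Hat{X}^n$ exponentially ergodic (here the hypothesis $\gamma_i>0$ for some $i$ is used), and the concatenation preserves this; one then argues that the mean empirical measures under the concatenated policy converge to the ergodic occupation measure $\Bar\uppi_\epsilon$ associated with $\Bar{v}_\epsilon$. Exponential ergodicity plus the moment bound \cref{P6.1} upgrades this weak convergence to convergence of the time-averaged costs, yielding
\begin{equation*}
\limsup_{n\to\infty}\Hat{V}^n(\Hat{X}^n(0))\;\le\;\limsup_{n\to\infty} J\bigl(\Hat{X}^n(0),Z^n_\epsilon\bigr)\;=\;\Bar\uppi_\epsilon(r)\;\le\;\varrho^*+\epsilon\,,
\end{equation*}
and letting $\epsilon\downarrow0$ finishes the proof.

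The main obstacle, as the introduction itself flags, is the upper bound — and within it, the delicate interface between the two regions in the concatenated policy. The difficulty is that the JWC condition holds globally for the limiting diffusion but only on the bounded set $\Breve\sX^n$ for the $n^{\text{th}}$ system, so the translated control $\Bar{v}_\epsilon$ is only admissible while the state stays in the JWC region, and one must control both the excursions outside it and the cost incurred there. The crux is showing that the BSP not only stabilizes $\Hat{X}^n$ outside the JWC region (via \cref{P5.1}) but does so with uniform-in-$n$ exponential decay and moment control, so that the contribution of these excursions to the long-run average cost is asymptotically negligible and the mean empirical measures genuinely converge to $\Bar\uppi_\epsilon$ rather than to some measure contaminated by the out-of-region dynamics. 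Establishing this convergence and the attendant uniform integrability — essentially a Lyapunov/Foster argument patched across the two regimes, drawing on the exponential Lyapunov inequality \cref{E-stablev-exp} on the diffusion side and its $n^{\text{th}}$-system counterpart — is where the real work lies.
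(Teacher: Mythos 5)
Your proposal follows essentially the same route as the paper's proof: the lower bound via tightness of the mean empirical measures (\cref{P6.1}, \cref{L2.2}) and the identification of subsequential limits as ergodic occupation measures (\cref{L6.1}, resting on \cref{L2.3,L2.5}), and the upper bound via the $\epsilon$-optimal continuous precise control of \cref{L7.1}, the concatenation of $z^n[v_\epsilon]$ inside the JWC region with a BSP outside it, and the patched exponential Foster--Lyapunov estimate (which the paper formalizes as \cref{P5.1} together with \cref{P6.3}) that yields uniform integrability and lets \cref{L6.2} pass the costs to the limit. The only cosmetic difference is that you do not name \cref{P6.3} or the Birkhoff/ball-truncation step explicitly, but both are subsumed in the argument you describe.
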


\begin{theorem}[constrained problem] \label{T3.2}
Under the assumptions of \cref{T3.1}, we have the following:
\begin{itemize}
\item[\upshape{(}i\upshape{)}]  $($lower bound$)$
Suppose that under a sequence
$\{Z^{n},\;n\in\NN\}\subset\boldsymbol\fZ$, the constraint
in \cref{constraint-ds} is satisfied
for all sufficiently large $n\in\NN$.
Then
\begin{equation*} 
\liminf_{n\to\infty}\; J_{\mathsf{o}}\bigl(\Hat{X}^{n}(0),\Hat{Z}^{n}\bigr)
\;\ge\;\varrho^{*}_{\mathsf{c}}\,,
\end{equation*}
and as a result we have that
$~\displaystyle
\liminf_{n\to\infty}\;\Hat{V}^{n}_{\mathsf{c}}(\Hat{X}^{n}(0))
\,\ge\, \varrho_{\mathsf{c}}^{*}\,.$
\smallskip

\item[\upshape{(}ii\upshape{)}]  $($upper bound$)$
For any $\epsilon>0$, there exists a sequence
$\{Z^{n},\;n\in\NN\}\subset\boldsymbol\fZ$ such that the constraint
in \cref{constraint-ds} is feasible for all sufficiently large $n$,
and
\begin{equation*}
\limsup_{n\to\infty}\; J_{\mathsf{o}}\bigl(\Hat{X}^{n}(0),\Hat{Z}^{n}\bigr)
\;\le\;\varrho^{*}_{\mathsf{c}}+\epsilon\,.
\end{equation*}
Consequently, we have that
$~\displaystyle
\limsup_{n\to\infty}\;\Hat{V}^{n}_{\mathsf{c}}(\Hat{X}^{n}(0))
\;\le\; \varrho_{\mathsf{c}}^{*}$\,.
\end{itemize}
\end{theorem}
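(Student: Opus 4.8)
The plan is to follow the same two-part structure as the unconstrained problem in \cref{T3.1}, adapting the arguments to handle the idleness constraints. For the lower bound, I would begin with a sequence $\{Z^n\}\subset\boldsymbol\fZ$ under which \cref{constraint-ds} holds for all large $n$. Without loss of generality I may assume $\liminf_n J_{\mathsf{o}}(\Hat X^n(0),\Hat Z^n)<\infty$, since otherwise the bound is trivial; passing to a subsequence achieving the $\liminf$, the finiteness of the running cost together with the moment bounds of \cref{S6} (specifically the $n^{\text{th}}$-system analog \cref{P6.1} of \cref{E-BQYBS}) yields tightness of the mean empirical measures $\Bar\Phi^n$ of $(\Hat X^n,U^n)$. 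Any subsequential limit $\uppi$ is then an ergodic occupation measure for the limiting diffusion, i.e. $\uppi\in\eom$, by the convergence argument underlying \cref{L6.1} (this uses that the policies are EJWC so that \cref{L2.5} applies). The key additional point beyond the unconstrained case is that the constraint passes to the limit: by lower semicontinuity of $\uppi\mapsto\uppi(r_j)$ and Fatou, $\uppi(r_j)\le\liminf_n J_{\mathsf{c},j}(\Hat X^n(0),\Hat Z^n)\le\updelta_j$, so $\uppi$ is feasible for the constrained diffusion problem (P2$'$). Hence $\liminf_n J_{\mathsf{o}}\ge\uppi(r_{\mathsf{o}})\ge\varrho^*_{\mathsf{c}}$, and taking infimum over admissible sequences gives the stated bound for $\Hat V^n_{\mathsf{c}}$.

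For the upper bound, fix $\epsilon>0$. Since (P2$'$) is well posed, I would invoke the spatial-truncation machinery referenced in the introduction to obtain a continuous precise stationary Markov control $\Bar v_\epsilon\in\Usm$ that is $\epsilon$-optimal and under which the limiting diffusion is exponentially ergodic (\cref{L7.1}, \cref{C7.1}); crucially, since $\Bar v_\epsilon$ is strictly feasible up to $\epsilon$, one arranges $J_{x,\Bar v_\epsilon}[r_j]\le\updelta_j$ for each $j$ (feasibility may be achieved exactly, or within a vanishing slack, by a standard perturbation of the truncation radius). I then construct for each $n$ the concatenated admissible policy of \cref{D6.1}: inside the JWC region $n(\Breve B+x^*)$ apply the scheduling policy built canonically from $\Bar v_\epsilon$, and outside apply a fixed BSP. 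By \cref{P5.1} and the stability-preserving results of \cref{S6}, the diffusion-scaled process under this policy is exponentially ergodic and its mean empirical measures converge to the ergodic occupation measure $\uppi_{\Bar v_\epsilon}$ associated with $\Bar v_\epsilon$.

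It remains to transfer both the objective and the constraints across this limit. Using the moment bounds to secure uniform integrability, I would conclude $\lim_n J_{\mathsf{o}}(\Hat X^n(0),\Hat Z^n)=\uppi_{\Bar v_\epsilon}(r_{\mathsf{o}})\le\varrho^*_{\mathsf{c}}+\epsilon$, and simultaneously $\lim_n J_{\mathsf{c},j}(\Hat X^n(0),\Hat Z^n)=\uppi_{\Bar v_\epsilon}(r_j)\le\updelta_j$, so the constraint is asymptotically feasible. The consequence for $\Hat V^n_{\mathsf{c}}$ then follows by definition of the infimum. The main obstacle is the upper bound, and within it the delicate point is that \emph{both} the cost and every idleness constraint must converge under the \emph{same} concatenated policy: the BSP region contributes to the empirical measure and must not destroy feasibility. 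This is where exponential ergodicity under the BSP (from \cref{P5.1}, requiring $\gamma_i>0$ for some $i$) is essential, as it forces the time the process spends outside the JWC region—and hence the contribution of the BSP region to both $\uppi_{\Bar v_\epsilon}(r_{\mathsf{o}})$ and $\uppi_{\Bar v_\epsilon}(r_j)$—to be negligible in the limit, guaranteeing that the limiting measure is exactly $\uppi_{\Bar v_\epsilon}$ and inherits its near-optimality and feasibility.
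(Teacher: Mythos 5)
Your lower bound is essentially the paper's own argument: pass to a subsequence along which the cost converges, use \cref{P6.1} to obtain the first-moment bound required by \cref{L6.1}, extract a subsequential limit $\Hat\uppi\in\eom$ of the mean empirical measures, pass the constraint to the limit using that each $r_j$ is nonnegative and lower semicontinuous, and conclude $\Hat\uppi(r_{\mathsf{o}})\ge\varrho^{*}_{\mathsf{c}}$ from the characterization of the constrained diffusion problem in \cite{AP15}. One point you should make explicit: since $\zeta\equiv 0$ in $\Hat{r}_{\mathsf{o}}$, finiteness of $J_{\mathsf{o}}$ alone does not activate \cref{P6.1}; the idleness moments on the right-hand side of \cref{EP6.1A} are supplied by the hypothesis that \cref{constraint-ds} holds, so both the cost and the constraint are needed for tightness (and one works with $\kappa=1\le m\wedge\Tilde{m}$).

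The gap is in the upper bound, at precisely the point you call delicate. Statement (ii) requires that \cref{constraint-ds} be \emph{satisfied} for all sufficiently large $n$, not merely ``asymptotically feasible.'' Your argument yields only $\limsup_{n}J_{\mathsf{c},j}\le\uppi_{v_{\epsilon}}(r_j)\le\updelta_j$, which is compatible with $J_{\mathsf{c},j}\bigl(\Hat{X}^{n}(0),Z^{n}\bigr)>\updelta_j$ for every $n$; moreover, the transfer of the constraint functionals to the limit is not an exact limit: for a fixed truncation ball the Birkhoff/uniform-integrability step leaves a residual error $\eta$ that does not vanish as $n\to\infty$. Hence, if $\uppi_{v_{\epsilon}}(r_j)=\updelta_j$ --- which your construction explicitly permits (``feasibility may be achieved exactly'') --- prelimit feasibility cannot be concluded at all. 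The paper resolves this by building \emph{strict} slack into the diffusion-level control: \cref{C7.1}\,(i) provides, for each $\epsilon$, a continuous precise control $v_{\epsilon}$ and constants $\updelta_j^{\epsilon}<\updelta_j$ with $\uppi_{v_{\epsilon}}(r_j)\le\updelta_j^{\epsilon}$; then, in the prelimit transfer, $\eta$ is chosen smaller than $\epsilon\wedge\tfrac{1}{2}\min_{j}(\updelta_j-\updelta_j^{\epsilon})$, which gives $\limsup_{n}J_{\mathsf{c},j}\le\tfrac{1}{2}(\updelta_j+\updelta_j^{\epsilon})<\updelta_j$, i.e., genuine feasibility for all large $n$. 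Your parenthetical remedy (``within a vanishing slack, by a standard perturbation of the truncation radius'') points the wrong way: a slack that vanishes is exactly what fails; what is needed is a slack bounded away from zero for each fixed $\epsilon$, against which the non-vanishing error $\eta$ can be absorbed. With that correction, the rest of your construction (concatenation of $z^{n}[v_{\epsilon}]$ from \cref{D6.1} with a BSP, exponential ergodicity from \cref{P5.1,P6.3}, and convergence of the empirical measures via \cref{L6.2}) matches the paper's proof.
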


\medskip
\section{BQBS Stability and Fairness} \label{S4}

\subsection{BQBS stable networks}

It follows by \cref{E-BQYBS} that the controlled
diffusion limit for multiclass multi-pool networks have the following
property.
If under some admissible control (admissible scheduling policy)
the mean empirical value of some power $\kappa\ge1$
of the queueing and
idleness processes is bounded, then the corresponding mean empirical value
of the state process also remains bounded.
This property also holds for the diffusion-scaled processes in the
$n^{\text{th}}$ system,
as shown later in \cref{P6.1}.

There is however a large class of networks that share a more specific property,
namely that
the average value of any moment of a state process, is controlled by
the average value of the corresponding moment of the queueing process alone.
More precisely, the limiting diffusion of this class of networks
satisfies
\begin{equation} \label{E4.1}
\Exp^{U}_{x}\biggl[\int_{0}^{T}\abs{X_{s}}^{\kappa}\,\D{s}\biggr]
\;\le\; C\, \abs{x}^\kappa
+ C\,\Exp^{U}_{x}\biggl[\int_{0}^{T}
\bigl[1+\bigl(e\cdot X_s\bigr)^+\bigr]^{\kappa}\,\D{s}\biggr]
\qquad\forall\,T>0\,,\quad\forall\,U\in\Uadm\,,
\end{equation}
for any $\kappa\ge1$, and for a constant $C$ which depends only on $\kappa$.
We refer to the class of networks which
satisfy \cref{E4.1} as \emph{bounded-queue, bounded-state} (BQBS) stable.

Define
\begin{equation}\label{E-cone+}
\cK_{\delta,+}\;\df\;
\bigl\{ x\in\RI\,\colon e\cdot x > \delta \abs{x}\bigr\}\,.
\end{equation}
It follows by the proof of \cite[Theorem~3.1]{ABP14} that
a sufficient condition for BQBS stability is that
\cref{E-structural} holds with $\cK_{\delta}$ replaced by $\cK_{\delta,+}$, i.e.,
\begin{equation}\label{E4.3}
b(x,u)\cdot\nabla \Lyap_{\kappa,\beta}(x)\;\le\;
c_0 + c_1 \Lyap_{\kappa,\beta}(x)\,\Ind_{\cK_{\delta,+}}(x)
- c_2 \Lyap_{\kappa,\beta}(x)\,\Ind_{\cK^c_{\delta,+}}(x)
\qquad\forall\, (x,u)\in\RR^I\times\Act\,.
\end{equation}
As shown later in \cref{P6.2}, the inequality
in \cref{E4.3} is sufficient for \cref{E4.1} to
hold for the $n^{\text{th}}$ system, uniformly in $n\in\NN$.
The class of networks which satisfy \cref{E4.3},
and are therefore BQBS stable, includes the following special classes: 
\begin{enumerate}
\item[(i)]
\emph{Networks with a single dominant class}:
there is only one class of jobs that can be served by more than
one server pools (see Corollary~4.2 in \cite{AP15}).
This includes the standard ``N" and  ``W" networks, the generalized ``N"
and ``W" networks,  and more general networks as depicted in \cref{fig-networks}.
\begin{figure}[ht]
\centering
    \includegraphics[width=0.47\textwidth]{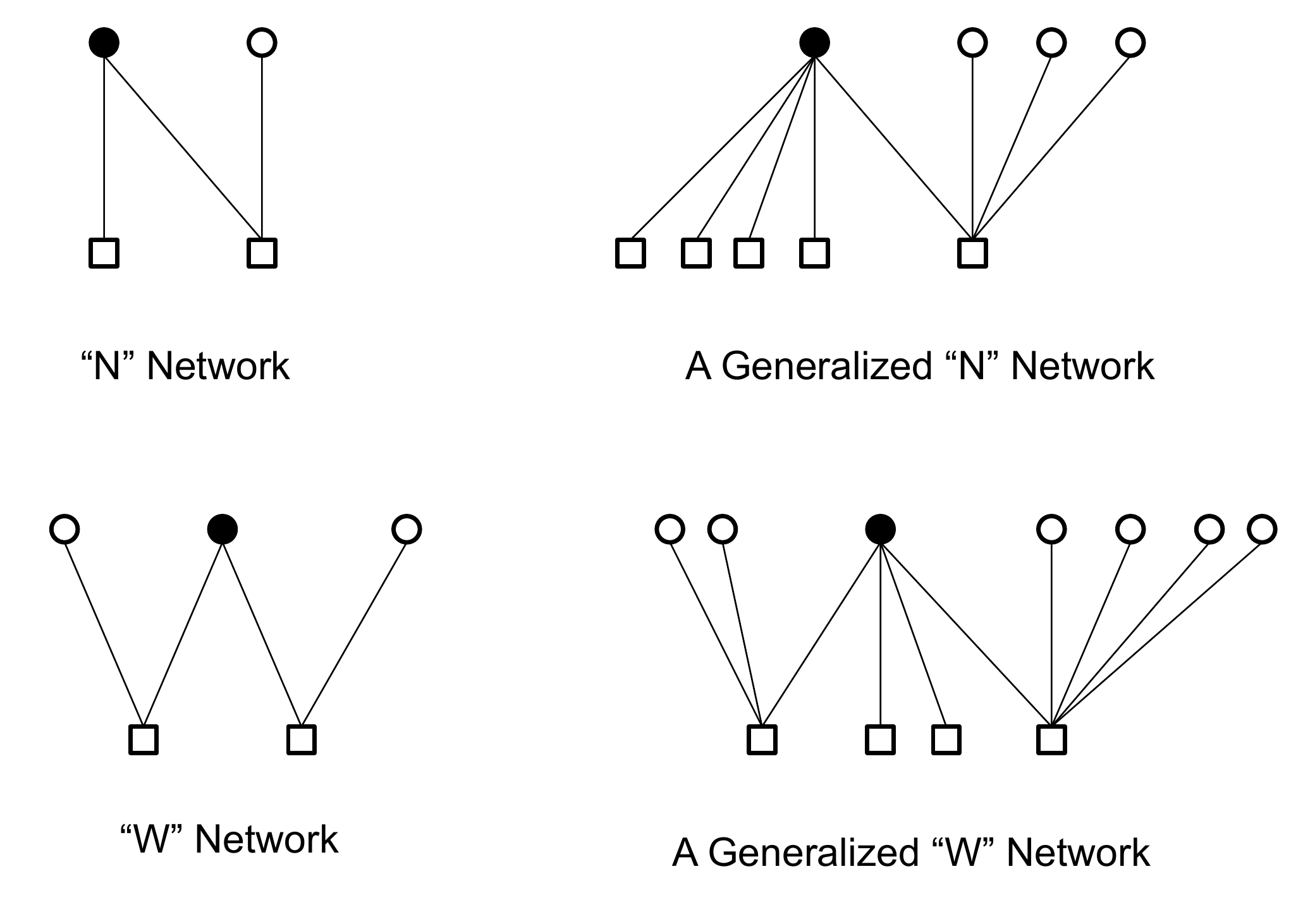}
 \ \ \ \   \includegraphics[width=0.438\textwidth]{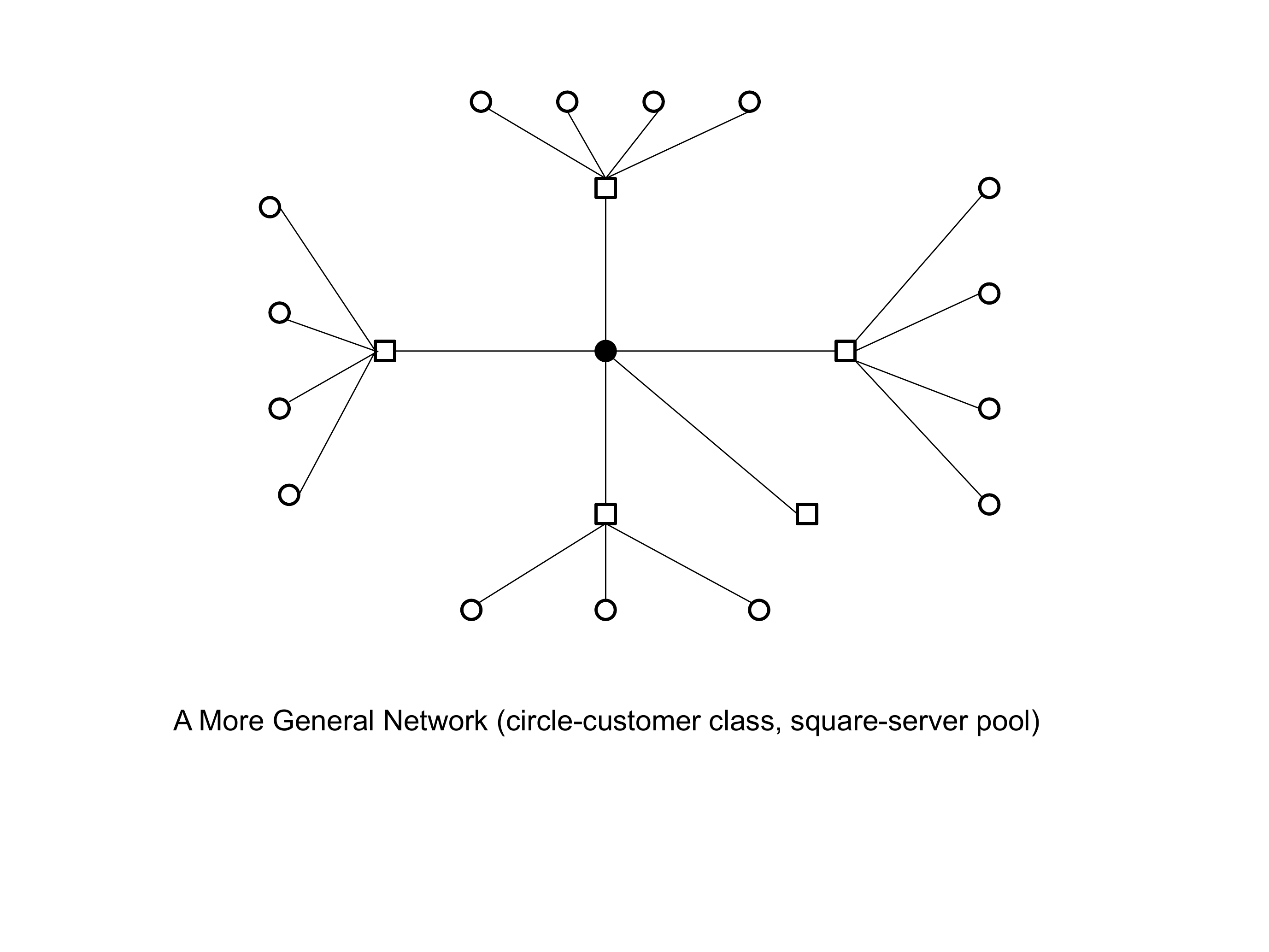}
 \caption{Examples of networks with a single dominant class} \label{fig-networks}
\end{figure}

\item[(ii)] \emph{Networks with the following parameter assumptions}:
\begin{equation*} 
\max_{i,i'\in\cI,\,j\in\cJ(i)}\;
\abs{\mu_{ij}-\mu_{i'j}}\;\le\; \Tilde\delta\,
\max_{i\in\cI,\,j\in\cJ}\;\{\mu_{ij}\}\,,
\end{equation*}
for sufficiently small $\Tilde{\delta}>0$.
This includes networks with pool-dependent service rates, i.e.,
$\mu_{ij} = \bar{\mu}_j$ for all $(i,j) \in \cE$,
 as a special class.
(See Corollary~4.1 in \cite{AP15}).
\end{enumerate}

\begin{remark}\label{R4.1}
For networks that satisfy \cref{E4.3}, ergodic control problems with
a running cost penalizing only the queue are well posed
(e.g., we may allow $\zeta=0$ in \cref{runcost-ex}).
This is because, in the diffusion scale,
the average value of the state process is controlled
by the average value of the queue, and also by the fact,
as shown in \cref{L2.2}, that idleness is upper bounded by some multiple of
the state.
\end{remark}

\subsection{The fairness problem} \label{S4.2}

In addition to ergodic control problems as in (P1)--(P2),
for BQBS stable networks we can also consider constrained problems which
aim at balancing idleness among the server pools, and result
in a fair allocation of idle servers.
Let
\begin{equation*}
\cS^J\;\df\;\{\uptheta\in(0,1)^{J}\;\colon\, e\cdot \uptheta=1\}\,.
\end{equation*}
For the $n^{\rm th}$ system, we formulate this type
of ergodic control problems as follows.
\smallskip

\textbf{(P3)} (\emph{fairness})
Here we minimize the queueing cost while keeping the average idleness
of the server pools balanced.
Let $\uptheta=(\uptheta_1,\dotsc,\uptheta_J)\transp\in\cS^{J}$ be a positive
vector and let $1\le\Tilde{m}< m$.
Let $\Bar{J}_{\mathsf{c}}\df\sum_{\jmath\in\cJ}J_{\mathsf{c},\jmath}$.
The associated cost minimization problem becomes 
\begin{align*} 
\Hat{V}^{n}_{\mathsf{f}}(\Hat{X}^{n}(0)) &\;\df\;
\inf_{Z^{n} \in \Breve\fZ^{n}} J_{\mathsf{o}}\bigl(\Hat{X}^{n}(0), Z^{n}\bigr)\,
\nonumber\\[5pt]
\text{subject to}\quad
J_{\mathsf{c},j}\bigl(\Hat{X}^{n}(0), Z^{n}\bigr)
&\;=\;\uptheta_j\,
\Bar{J}_{\mathsf{c}} \bigl(\Hat{X}^{n}(0), Z^{n}\bigr)\,,
\qquad j\in\cJ\,.
\end{align*}

\smallskip

For the corresponding diffusion, we have the following cost minimization problem. 

\textbf{(P3$\bm'$)} (\emph{fairness})
The running costs $r_{\mathsf{o}}$, and $r_j$, $j\in\cJ$, are as in (P2$'$).
Let $\uptheta=(\uptheta_1,\dotsc,\uptheta_J)\transp\in\cS^{J}$ be a positive
vector, and $1\le\Tilde{m}< m$.
The ergodic control problem under idleness fairness is defined as
\begin{equation} \label{E-P3'}
\begin{split} 
\varrho_{\mathsf{f}}^{*}(x) &\;=\; \inf_{U \in \Uadm} \;J_{x,U}[r_{\mathsf{o}}]\\[5pt]
\text{subject to}\quad
J_{x,U}[r_{j}] &\;=\; \uptheta_j\, \sum_{\jmath\in\cJ} J_{x,U}[r_{\jmath}]\,,
\qquad j\in\cJ\,. 
\end{split}
\end{equation}

We next state an optimality result for the fairness problem (P3$\bm'$). 
We first introduce some notation. 
Let 
\begin{equation} \label{E-H}
H_r(x,p)\;\df\;\min_{u\in\Act}\;\bigl[b(x,u)\cdot p + r(x,u)\bigr]\,.
\end{equation}
For $\uptheta=(\uptheta_1,\dotsc,\uptheta_J)\transp \in\RR_{+}^J$ and
$\uplambda=(\uplambda_{1},\dotsc, \uplambda_J)\transp\in\RR^{J}_{+}$,
define the running cost $h_{\uptheta,\uplambda}$ by
\begin{equation*}
h_{\uptheta,\uplambda} (x,u)\;\df\; r_{\mathsf{o}}(x,u) +
\sum_{j\in\cJ}
\uplambda_j\bigl(r_{j}(x,u)-\uptheta_j\, \Bar{r}(x,u)\bigr)\,,
\end{equation*}
where $\Bar{r} = r_1+\dotsb+r_J$.
We also let
\begin{equation*}
\sH_{\mathsf{f}}(\uptheta)\;\df\; \bigl\{\uppi\in\eom\;\colon\,
\uppi(r_{j}) = \uptheta_{j} \uppi(\Bar{r})\,,\; j\in\cJ\bigr\}\,,
\quad \uptheta  \in\RR_{+}^J\,. 
\end{equation*}

The following theorem characterizes of the optimal
solution of (P3$'$)---see Theorem~5.8 in \cite{AP15} and Theorem~4.3 in \cite{AP16}.
The existence of solutions to the HJB equation is proved for the diffusion
control problem of the ``N" network, but the argument used in the proof
is applicable to the general multiclass multi-pool model discussed here.
The uniqueness of the solutions $V_{\mathsf{f}}$ follows exactly as in the proof of
Theorem~3.2 in \cite{AP15}. 
\begin{theorem}\label{T4.1}
Suppose that the network is BQBS stable and $\gamma_i>0$ for some $i\in\cI$.
Then the constraint in \cref{E-P3'} is feasible
for any positive vector
$\uptheta=(\uptheta_1,\dotsc,\uptheta_J)\transp\in\cS^{J}$.
In addition, the following hold.
\begin{itemize}
\item[\upshape{(}a\upshape{)}]
There exists 
$\uplambda^{*}\in\RR^{J}_{+}$ such that
\begin{equation*}
\inf_{\uppi\,\in\,\sH(\uptheta)}\;\uppi(r_{\mathsf{o}}) \;=\;
\inf_{\uppi\,\in\,\eom}\;\uppi(h_{\uptheta,\uplambda^{*}})
\;=\;\varrho^*_{\mathsf{f}}\,.
\end{equation*}
\item[\upshape{(}b\upshape{)}]
If $\uppi^{*}\in\sH(\uptheta)$
attains the infimum of $\uppi\mapsto\uppi(r_{\mathsf{o}})$
in $\sH(\uptheta)$, then
$\uppi^{*}(r_{\mathsf{o}})\;=\;\uppi^{*}(h_{\uptheta,\uplambda^{*}})$,
and
\begin{equation*}
\uppi^{*}(h_{\uptheta,\uplambda})\;\le\;\uppi^{*}(h_{\uptheta,\uplambda^{*}})
\;\le\;
\uppi(h_{\uptheta,\uplambda^{*}})
\qquad\forall\,(\uppi,\uplambda)\in\eom\times\RR^{J}_{+}\,.
\end{equation*}
\item[\upshape{(}c\upshape{)}]
There exists  $V_{\mathsf{f}}\in\Cc^{2}(\RR^I)$ satisfying 
\begin{equation*}
\min_{u\in\Act}\;\bigl[\Lg^{u}V_{\mathsf{f}}(x)
+ h_{\uptheta,\uplambda^{*}}(x,u)\bigr]
\;=\; \uppi^{*}(h_{\uptheta,\uplambda^{*}}) = \varrho^*_{\mathsf{f}}\,,\quad
x\in\RR^{I}\,.
\end{equation*}
\item[\upshape{(}d\upshape{)}]
A stationary Markov control $v_{\mathsf{f}}\in\Ussm$
is optimal if and only if it satisfies 
\begin{equation*}
H_{h_{\uptheta,\uplambda^{*}}}\bigl(x,\nabla V_{\mathsf{f}}(x)\bigr) \;=\;
b\bigl(x, v_{\mathsf{f}}(x)\bigr)\cdot \nabla V_{\mathsf{f}}(x)
+ h_{\uptheta,\uplambda^{*}}\bigl(x,v_{\mathsf{f}}(x)\bigr)
\quad\text{a.e.~in~} \RR^{I}\,,
\end{equation*}
where $H_{h_{\uptheta,\uplambda^{*}}}$ is defined in \cref{E-H} with $r$
replaced by $h_{\uptheta,\uplambda^{*}}$.
\item[\upshape{(}e\upshape{)}]
The map
$\uptheta\mapsto\inf_{\uppi\,\in\,\sH(\uptheta)}\;\uppi(r_{\mathsf{o}})$
is continuous at any feasible point $\Hat\uptheta$.
\end{itemize} 
\end{theorem}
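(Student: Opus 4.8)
The plan is to treat (P3$'$) as a convex program---minimize the linear functional $\uppi\mapsto\uppi(r_{\mathsf{o}})$ over the convex set $\sH_{\mathsf{f}}(\uptheta)\subset\eom$ cut out by the $J$ linear equality constraints $\uppi(r_j)=\uptheta_j\,\uppi(\Bar{r})$---and to resolve it through the Lagrangian duality framework developed for (P2$'$) in \cite{AP15} and for the fairness problem on the `N' network in \cite{AP16}. I would proceed in five steps: (1) prove feasibility, i.e.\ $\sH_{\mathsf{f}}(\uptheta)\neq\emptyset$; (2) establish strong duality and produce the multiplier $\uplambda^*$, giving parts~(a)--(b); (3) solve the associated unconstrained HJB equation, giving~(c); (4) read off the verification characterization~(d); and (5) settle continuity~(e).

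For feasibility I would exhibit, for each positive $\uptheta\in\cS^J$, a stable stationary Markov control whose ergodic occupation measure lies in $\sH_{\mathsf{f}}(\uptheta)$. Steering the idleness-split coordinate $u^s$ biases the long-run idleness across pools, and a continuity/degree argument (as for $J=2$ in \cite{AP16}, extended by convexity to general $J$) shows the achievable ratio vectors sweep the whole open simplex $\cS^J$; the stabilizing controls of \cref{T2.1}, available since $\gamma_i>0$ for some $i$, keep the diffusion positive recurrent, and since $\Tilde{m}<m$ the BQBS moment bound \cref{E4.1} makes both $\uppi(r_{\mathsf{o}})$ and $\uppi(r_j)$ finite. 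The structural point for duality is that the $J$ constraints are dependent: because $\Bar{r}=\sum_{j}r_j$ and $e\cdot\uptheta=1$, one has $\sum_j(r_j-\uptheta_j\Bar{r})\equiv0$, so $h_{\uptheta,\uplambda}$ is invariant under $\uplambda\mapsto\uplambda+c\,e$. Using \cref{E4.1} to obtain tightness, hence compactness of the relevant sublevel sets of $\eom$, a Sion-type minimax over $\eom$ and the multipliers yields no duality gap; the gauge freedom then lets me normalize the optimal multiplier to $\uplambda^*\in\RR^J_+$. This is part~(a), and the saddle-point relations of~(b) are the minimax equality at $(\uppi^*,\uplambda^*)$ together with the fact that the constraint terms in $h_{\uptheta,\uplambda}$ vanish on $\uppi^*\in\sH_{\mathsf{f}}(\uptheta)$.

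Parts~(c)--(d) reduce to the \emph{unconstrained} ergodic problem with running cost $h_{\uptheta,\uplambda^*}$. The essential difficulty is that $r_{\mathsf{o}}$ penalizes only the queue, so the inf-compactness of the running cost that one would normally exploit for HJB existence is unavailable---and this is precisely where BQBS stability enters. Invoking \cref{E4.3} (equivalently the moment bound \cref{E4.1}) I would establish the tightness and moment estimates that substitute for inf-compactness, and then run the vanishing-discount construction of \cite{ABP14,AP15}; the existence argument given for the `N' network in \cite{AP16} carries over once the tree drift decomposition \cref{E-drift3} and the Lyapunov estimate \cref{E-structural} are in place, producing $V_{\mathsf{f}}\in\Cc^2(\RR^I)$ solving the stated HJB equation, with uniqueness following as in Theorem~3.2 of \cite{AP15}. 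Part~(d)---optimality of $v_{\mathsf{f}}\in\Ussm$ being equivalent to its being an a.e.\ minimizing selector in the HJB equation---is then the standard verification argument, combining the stochastic representation from It\^o's formula with the stability estimate \cref{E-stablev}.

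Finally, for~(e) I would prove that $\uptheta\mapsto\inf_{\uppi\in\sH_{\mathsf{f}}(\uptheta)}\uppi(r_{\mathsf{o}})$ is both upper and lower semicontinuous at an interior feasible point $\Hat\uptheta$: upper semicontinuity from the feasibility construction of step~(1), which is robust under small perturbations of $\uptheta$, and lower semicontinuity from the compactness of the sublevel sets together with the joint continuity of the constraint functionals $(\uppi,\uptheta)\mapsto\uppi(r_j)-\uptheta_j\uppi(\Bar{r})$. I expect the genuine obstacle to lie in the interaction of steps~(2) and~(3): since only the queue is penalized, both the no-duality-gap argument and the HJB existence rest entirely on the BQBS estimate \cref{E4.3}, and the delicate point is to confirm that the `N'-network existence proof transfers to a general tree topology under \cref{E4.3}, rather than under the stronger uniform-stability hypotheses that hold only in special cases.
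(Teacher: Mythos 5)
Your overall route is the same as the paper's. The paper does not give a self-contained proof of \cref{T4.1}: it points to Theorem~5.8 of \cite{AP15} (the Lagrangian framework for constrained ergodic control) and Theorem~4.3 of \cite{AP16} (the fairness problem for the `N' network), remarks that the HJB existence argument for the `N' network transfers to the general tree model, and takes uniqueness of $V_{\mathsf{f}}$ from Theorem~3.2 of \cite{AP15}. Your plan---the convex program over $\eom$, strong duality with the gauge freedom $\sum_{j}(r_j-\uptheta_j\Bar{r})\equiv 0$ used to normalize $\uplambda^{*}\in\RR^{J}_{+}$, vanishing discount plus the Lyapunov estimates for part (c), verification for (d), semicontinuity for (e), and the BQBS bound \cref{E4.1} standing in for inf-compactness of the running cost---reconstructs exactly that framework, including the two attributions the paper itself makes.

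The one step where your reasoning has a genuine gap is feasibility. You sweep the idleness component $u^s$ and appeal to the stabilizing control of \cref{T2.1} to keep the diffusion positive recurrent along the sweep; but \cref{T2.1} produces a \emph{single} control $\Bar{v}$ with a particular idleness allocation, and nothing in it implies that altering $\Bar{v}^s$ preserves stability. In a general (non-BQBS) network this can fail---which is precisely why (P3$'$) is posed only for BQBS stable networks. The role of BQBS stability in feasibility is therefore not, as in your sketch, merely to make $\uppi(r_{\mathsf{o}})$ and $\uppi(r_j)$ finite; it is what decouples the idleness allocation from stability. This is transparent from \cref{E4.3}: the Foster--Lyapunov inequality there is uniform over $u\in\Act$, the only expanding region is $\cK_{\delta,+}$, and stabilization on that region is achieved through $u^c$ alone (using $\gamma_i>0$ for some $i$), so a stable control remains stable under an arbitrary replacement of its $u^s$ component. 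Once this is observed, the degree/convexity argument becomes unnecessary: take a queue-stabilizing $u^c$ together with the \emph{constant} allocation $u^s_j\propto\uptheta_j^{1/\Tilde{m}}$; since $r_j(x,u)=(u^s_j)^{\Tilde{m}}\bigl[(e\cdot x)^-\bigr]^{\Tilde{m}}$ for constant $u^s$, the ratio constraints $\uppi(r_j)=\uptheta_j\,\uppi(\Bar{r})$ hold exactly, with $\uppi(\Bar{r})>0$ guaranteed by the Harnack argument the paper uses in the proof of \cref{T4.2}.
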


\smallskip

We next state the asymptotic optimality result for this class of networks. 

\begin{theorem} \label{T4.2}
For the class of  networks which satisfy \cref{E4.3}, and
$\gamma_i>0$ for some $i\in\cI$, the following hold.
\begin{itemize}
 \item [\upshape{(}i\upshape{)}] $($lower bound$)$
there exists a positive constant $C_{\mathsf{f}}$ such that  if a sequence
$\{Z^{n},\;n\in\NN\}\subset\boldsymbol\fZ$ satisfies
\begin{equation}\label{ET4.2A}
\max_{j\in\cJ}\;\babss{ \frac{
J_{\mathsf{c},j}\bigl(\Hat{X}^{n}(0), Z^{n}\bigr)}
{ \Bar{J}_{\mathsf{c}}\bigl(\Hat{X}^{n}(0), Z^{n}\bigr)}  - \uptheta_j } \;\le\;
\epsilon\,
\end{equation}
for some $\epsilon>0$ and for all sufficiently large $n\in\NN$, then
\begin{equation*}
\liminf_{n\to\infty}\;J_{\mathsf{o}}(\Hat{X}^n(0),Z^n) \;\ge\;
\varrho^{*}_{\mathsf{f}}  - C_{\mathsf{f}}\epsilon \,; 
\end{equation*}

\smallskip
\item [\upshape{(}ii\upshape{)}] $($upper bound$)$
for any $\epsilon>0$, there exists a sequence
$\{Z^{n},\;n\in\NN\}\subset\boldsymbol\fZ$ such that
\cref{ET4.2A} holds for all sufficiently large $n\in\NN$, and
\begin{equation*}
\limsup_{n\to\infty}\;J_{\mathsf{o}}(\Hat{X}^n(0),Z^n) \;\le\;
\varrho^{*}_{\mathsf{f}} + \epsilon\,.
\end{equation*}
\end{itemize}
\end{theorem}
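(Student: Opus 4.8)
The plan is to follow the same two-sided template used for \cref{T3.1,T3.2} (and for the `N' network in \cite{AP16}), replacing the unconstrained and inequality-constrained arguments by the Lagrangian characterization of \cref{T4.1}. Throughout we exploit that the network is BQBS stable, so that the pre-limit moment bound of \cref{P6.2} (the $n^{\rm th}$-system analogue of \cref{E4.1}, uniform in $n$) is available; this is exactly what makes the idleness functionals $J_{\mathsf{c},j}$ and $\Bar J_{\mathsf{c}}$ controllable by the queueing cost $J_{\mathsf{o}}$, and hence what makes (P3) and (P3$'$) well posed.

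For the lower bound, fix a sequence $\{Z^n\}\subset\boldsymbol\fZ$ satisfying \cref{ET4.2A}; passing to a subsequence we may assume $\liminf_n J_{\mathsf{o}}(\Hat X^n(0),Z^n)$ is finite and attained along it. By the BQBS bound of \cref{P6.2} the mean empirical measures of $\Hat X^n$ have uniformly bounded moments of every order $\kappa\ge1$, hence are tight; by \cref{L6.1} any subsequential limit $\uppi$ lies in $\eom$ and $\liminf_n J_{\mathsf{o}}\ge\uppi(r_{\mathsf{o}})$. The uniform integrability supplied by these moment bounds (and $\Tilde m<m$) also yields $J_{\mathsf{c},j}^n\to\uppi(r_j)$ and $\Bar J_{\mathsf{c}}^n\to\uppi(\Bar r)$ along the subsequence, so that \cref{ET4.2A} passes to the limit as $\babs{\uppi(r_j)-\uptheta_j\,\uppi(\Bar r)}\le\epsilon\,\uppi(\Bar r)$ for every $j\in\cJ$. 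I would then decompose, using the definition of $h_{\uptheta,\uplambda^{*}}$,
\begin{equation*}
\uppi(r_{\mathsf{o}})\;=\;\uppi(h_{\uptheta,\uplambda^{*}})
-\sum_{j\in\cJ}\uplambda^{*}_j\bigl(\uppi(r_j)-\uptheta_j\,\uppi(\Bar r)\bigr)\,,
\end{equation*}
and invoke \cref{T4.1}\,(a), which gives $\uppi(h_{\uptheta,\uplambda^{*}})\ge\varrho^{*}_{\mathsf{f}}$, together with $\babs{\sum_{j}\uplambda^{*}_j(\uppi(r_j)-\uptheta_j\uppi(\Bar r))}\le\epsilon\,(e\cdot\uplambda^{*})\,\uppi(\Bar r)$. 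Distinguishing the trivial case $\uppi(r_{\mathsf{o}})\ge\varrho^{*}_{\mathsf{f}}$, in the remaining case $\uppi(r_{\mathsf{o}})<\varrho^{*}_{\mathsf{f}}$ the quantity $\uppi(\Bar r)$ is bounded by a constant depending only on $\varrho^{*}_{\mathsf{f}}$ and the data (via BQBS stability, since $\Tilde m<m$), so $C_{\mathsf{f}}\df(e\cdot\uplambda^{*})\cdot(\text{this bound})$ is independent of the sequence and gives $\uppi(r_{\mathsf{o}})\ge\varrho^{*}_{\mathsf{f}}-C_{\mathsf{f}}\epsilon$, as claimed.

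For the upper bound I would first use \cref{T4.1} to obtain the optimal stationary Markov control for (P3$'$), and then apply the spatial-truncation construction of \cref{L7.1,C7.1} to produce a continuous precise control $\Bar v_{\epsilon}\in\Ussm$ that is $\epsilon$-optimal for (P3$'$), under which the limiting diffusion is exponentially ergodic, and whose ergodic occupation measure $\uppi_{\epsilon}$ satisfies the fairness identities $\uppi_{\epsilon}(r_j)=\uptheta_j\,\uppi_{\epsilon}(\Bar r)$ up to an error vanishing with the truncation radius. Next I would lift $\Bar v_{\epsilon}$ to the $n^{\rm th}$ system through the canonical scheduling policy of \cref{D6.1} inside the JWC region, concatenated with a fixed BSP outside it, producing a sequence in $\boldsymbol\fZ$. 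By \cref{P5.1} the BSP guarantees exponential ergodicity of $\Hat X^n$ outside the JWC region, and combined with the stability-preserving property inside it and the convergence of \cref{L6.1}, the mean empirical measures of $\Hat X^n$ under this concatenated policy converge to $\uppi_{\epsilon}$. Hence $J_{\mathsf{o}}(\Hat X^n(0),Z^n)\to\uppi_{\epsilon}(r_{\mathsf{o}})\le\varrho^{*}_{\mathsf{f}}+\epsilon$, while $J_{\mathsf{c},j}^n/\Bar J_{\mathsf{c}}^n\to\uppi_{\epsilon}(r_j)/\uppi_{\epsilon}(\Bar r)$ lies within $\epsilon$ of $\uptheta_j$ for the chosen truncation, so that \cref{ET4.2A} holds for all large $n$.

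The main obstacle is the upper bound, and within it two points require care. First, the concatenated policy is work conserving only in the JWC region, so exponential ergodicity of $\Hat X^n$ cannot be read off directly from \cref{P5.1}; it must be obtained by patching the BSP estimates outside the JWC region to the stability-preserving property inside, uniformly in $n$, and this is what secures both the convergence of empirical measures and the uniform integrability needed to pass the cost and the constraint functionals to the limit. Second, unlike the inequality-constrained problem (P2), here the fairness constraint is an equality of ratios, so I must ensure that $\Bar v_{\epsilon}$ reproduces the proportions $\uptheta_j$ in the limit—not merely a one-sided bound—and that $\uppi_{\epsilon}(\Bar r)$ remains bounded away from $0$, so that the ratios $J_{\mathsf{c},j}^n/\Bar J_{\mathsf{c}}^n$ are well defined and converge; BQBS stability is used once more to keep these idleness functionals finite and positive uniformly in $n$.
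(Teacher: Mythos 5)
Your overall template matches the paper's: tightness of mean empirical measures via the BQBS bound of \cref{P6.2}, identification of limits in $\eom$ via \cref{L6.1}, passage of the ratio constraint to the limit via the uniform integrability in \cref{C6.1}, and, for the upper bound, an $\epsilon$-optimal continuous precise control for (P3$'$) lifted to the $n^{\rm th}$ system by \cref{D6.1} and concatenated with a BSP outside the JWC region. Where you genuinely diverge is the final step of the lower bound: the paper closes by observing that the limit measure $\Hat\uppi$ lies in $\sH_{\mathsf{f}}(\uptheta')$ for some $\uptheta'$ within $\epsilon$ of $\uptheta$ and invokes the continuity of $\uptheta\mapsto\inf_{\uppi\in\sH_{\mathsf{f}}(\uptheta)}\uppi(r_{\mathsf{o}})$ (\cref{T4.1}\,(e)), whereas you use the Lagrangian decomposition
\begin{equation*}
\uppi(r_{\mathsf{o}})\;=\;\uppi(h_{\uptheta,\uplambda^{*}})
-\sum_{j\in\cJ}\uplambda^{*}_j\bigl(\uppi(r_j)-\uptheta_j\,\uppi(\Bar r)\bigr)
\;\ge\;\varrho^{*}_{\mathsf{f}}-\epsilon\,(e\cdot\uplambda^{*})\,\uppi(\Bar r)\,,
\end{equation*}
with $\uppi(h_{\uptheta,\uplambda^{*}})\ge\varrho^{*}_{\mathsf{f}}$ from \cref{T4.1}\,(a), plus the observation that $\uppi(\Bar r)$ is bounded by a data-dependent constant whenever $\uppi(r_{\mathsf{o}})<\varrho^{*}_{\mathsf{f}}$ (which is correct: $r_{\mathsf{o}}$ dominates a multiple of $[(e\cdot x)^+]^m$, and then \cref{E4.1} with $\Tilde{m}<m$ bounds $\uppi(\Bar r)$). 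This route is valid and in fact delivers the linear constant $C_{\mathsf{f}}$ explicitly, which mere continuity in \cref{T4.1}\,(e) does not obviously quantify; it is a legitimate, arguably sharper, alternative.

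The genuine gap is in the upper bound, precisely at the point you flag as requiring care. You correctly note that the ratios $J_{\mathsf{c},j}/\Bar{J}_{\mathsf{c}}$ only converge to $\uptheta_j$ if $\uppi_{v_\epsilon}(\Bar r)$ stays bounded away from zero, but you claim this follows from BQBS stability. It cannot: BQBS stability (\cref{E4.1}, \cref{E4.3}) only bounds state and idleness moments \emph{from above} by queue moments; it gives no lower bound on average idleness, and a priori an $\epsilon$-optimal control could concentrate the invariant measure where $(e\cdot x)^-$ is negligible, making the limiting ratios $0/0$. The paper supplies a separate argument: the family $\{\uppi_{v_\epsilon},\,\epsilon\in(0,1)\}$ is tight, $(e\cdot x)^-$ is strictly positive on an open subset of $B_1$, and the Harnack inequality applied to the density of the invariant probability measure of the limiting diffusion yields $\inf_{\epsilon\in(0,1)}\uppi_{v_\epsilon}(\Bar r)>0$. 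This positivity argument is the missing ingredient in your proposal. Two smaller corrections in the same part: \cref{C7.1}\,(ii) already provides a control satisfying the fairness identities $\uppi_{v_\epsilon}(r_j)=\uptheta_j\,\uppi_{v_\epsilon}(\Bar r)$ \emph{exactly} (not merely up to a truncation error, so you need not track that error at all); and the identification of the limit of the mean empirical measures under the concatenated policy as $\uppi_{v_\epsilon}$ is the content of \cref{L6.2}, not \cref{L6.1}, the latter only placing subsequential limits in $\eom$.
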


\begin{remark}
The reader will certainly notice that whereas \cref{T4.1} holds for
BQBS stable networks in general, i.e., networks which satisfy
\cref{E4.1}, the Foster--Lyapunov condition \cref{E4.3} is assumed
in \cref{T4.2} which asserts asymptotic optimality.
The reason behind this, is that the corresponding BQBS stability
property should hold for the diffusion-scaled processes in order
to establish the lower bound, and \cref{E4.3}
needs to be invoked in order to assert this property
(see \cref{P6.2} in \cref{S6}).
However, \cref{E4.3} is quite natural for the models considered here.
\end{remark}

\section{A family of stabilizing policies} \label{S5}

We introduce a class of stationary Markov scheduling policies for the general
multiclass multi-pool networks which is stabilizing for the diffusion-scaled state
processes in the H--W regime. 
Let $\cI_\circ\df\{i\in\cI\colon\gamma_i=0\}$.
Throughout this section we fix a collection
 $\{N_{ij}^n\in\NN\,,\;(i,j)\in\cE\,,n\in\NN\}$ which satisfies
$$\lfloor \xi_{ij}^* N_j^n \rfloor \le N_{ij}^n
\le \lceil \xi_{ij}^* N_j^n \rceil\,,\qquad\text{and\ \ }
\sum_{i\in \cI(j)}N_{ij}^n=N_j^n\,.$$
We also define $\Bar{N}_i^n\df \sum_{j\in \cI(i)}N_{ij}^n$ for $i\in\cI$.

\begin{lemma} \label{L5.1}
Suppose that $\cI_\circ\ne\cI$.
Then, given $\Tilde{C}_0>0$, there exist a collection
\begin{equation*}
\{\Tilde{N}_{ij}^n\in\NN\,,\;(i,j)\in\cE\,,\;n\in\NN\}\,,
\end{equation*}
and a positive constant $\Hat{C}_0$ satisfying
\begin{align}
\sum_{j\in\cJ(i)}\mu_{ij}^n (\Tilde{N}_{ij}^n-N_{ij}^n) & \;\ge\; 
2\Tilde{C}_0\,\sqrt{n}  \qquad
\text{if\ } i\in\cI_\circ\,,\label{EL5.1A}\\[5pt]
\babs{N_{ij}^n-\Tilde{N}_{ij}^n} &\;\le\;
\Hat{C}_0\sqrt{n}\qquad\forall (i,j)\in\cE\,,\label{EL5.1B}
\end{align}
and
\begin{equation}\label{EL5.1C}
\sum_{i\in \cI(j)}\Tilde{N}_{ij}^n=N_j^n\qquad\forall\,j\in\cJ
\end{equation}
for all sufficiently large $n\in\NN$.
\end{lemma}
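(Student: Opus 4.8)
The plan is to work with the increments $\delta_{ij}^n\df\Tilde{N}_{ij}^n-N_{ij}^n$ and to reduce the construction to a real-valued ``flux'' problem on the tree $\cG$. In terms of $\delta^n$ the three requirements read: (a) $\sum_{j\in\cJ(i)}\mu_{ij}^n\delta_{ij}^n\ge 2\Tilde{C}_0\sqrt{n}$ for $i\in\cI_\circ$; (b) $\abs{\delta_{ij}^n}\le\Hat{C}_0\sqrt{n}$; and (c) $\sum_{i\in\cI(j)}\delta_{ij}^n=0$ for every $j\in\cJ$, which is equivalent to \cref{EL5.1C} since $\sum_{i\in\cI(j)}N_{ij}^n=N_j^n$. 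Because $\xi^*_{ij}>0$ for $i\sim j$ and $N_j^n\sim n\nu_j$ with $\nu_j>0$, each $N_{ij}^n$ is of order $n$, so once the increments are of order $\sqrt n$ the nonnegativity and integrality of $\Tilde{N}_{ij}^n$ are automatic for large $n$. First I would solve the continuous relaxation: find $d=(d_{ij})_{(i,j)\in\cE}\in\RR^{\cE}$ with $\sum_{i\in\cI(j)}d_{ij}=0$ for all $j$ and $\phi_i(d)\df\sum_{j\in\cJ(i)}\mu_{ij}d_{ij}>0$ for all $i\in\cI_\circ$; rescaling $d$ by a large constant then makes each such capacity exceed $3\Tilde{C}_0$.

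The heart of the proof is the existence of such a $d$, and here the hypothesis $\cI_\circ\ne\cI$ and the connectedness of the tree $\cG$ are used essentially. Let $T\colon\RR^{\cE}\to\RR^{\cJ}$ be the pool-sum map $(Td)_j\df\sum_{i\in\cI(j)}d_{ij}$, so that we seek $d\in\ker T$ with $\phi_i(d)>0$ for all $i\in\cI_\circ$. By a theorem of the alternative (Stiemke's lemma), applied to the subspace $\{(\phi_i(d))_{i\in\cI_\circ}\colon d\in\ker T\}\subset\RR^{\cI_\circ}$, either such a $d$ exists, or there is a nonzero $w=(w_i)_{i\in\cI_\circ}$ with $w_i\ge0$ such that $\sum_{i\in\cI_\circ}w_i\,\phi_i(d)=0$ for every $d\in\ker T$. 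In the latter case the coefficient vector $c_{ij}\df w_i\mu_{ij}\Ind_{\{i\in\cI_\circ\}}$ lies in $(\ker T)^{\perp}$, which is the row space of $T$; hence $c_{ij}=\alpha_j$ depends only on $j$, for some $\alpha\in\RR^{\cJ}$. I would rule this out by a contagion argument: letting $S$ be the set of pools $j$ with $\alpha_j\ne0$ together with the classes $i\in\cI_\circ$ with $w_i\ne0$, the identities $\alpha_j=c_{ij}$ (with $\mu_{ij}>0$ for $i\sim j$) show that every neighbour in $\cG$ of a node of $S$ again lies in $S$, so $S$ is a union of connected components of $\cG$. Since $\cG$ is a tree, hence connected, and contains at least one class $i^*\notin\cI_\circ$ (which can never belong to $S$), we get $S=\emptyset$, i.e.\ $w\equiv0$, a contradiction. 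Thus the first alternative holds and the desired $d$ exists.

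It remains to discretize while keeping \cref{EL5.1C} exact. Set $\delta_{ij}^{n}\df\lfloor d_{ij}\sqrt n\rfloor$ as a first guess. The key observation is that modifying the entries indexed by a single pool $j$ changes only the $j$-th pool sum, so the pools decouple for the correction step. Since $\sum_{i\in\cI(j)}d_{ij}=0$, the rounding error $r_j\df\sum_{i\in\cI(j)}\lfloor d_{ij}\sqrt n\rfloor$ is an integer in $(-\abs{\cI(j)},0\,]$, hence bounded in absolute value by $I$ uniformly in $n$; I would restore \cref{EL5.1C} by adding the $\abs{r_j}$ missing units to an arbitrary class of pool $j$. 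Because $r_j\le0$ this correction only \emph{adds} servers, so each capacity $\sum_{j}\mu_{ij}^n\delta_{ij}^n$ can only increase. Using $\mu_{ij}^n\to\mu_{ij}$ together with $\lfloor d_{ij}\sqrt n\rfloor=d_{ij}\sqrt n+\order(1)$, for $i\in\cI_\circ$ the uncorrected capacity satisfies $\sum_{j}\mu_{ij}^n\lfloor d_{ij}\sqrt n\rfloor\ge2\Tilde{C}_0\sqrt n$ for all large $n$, and the correction preserves this, giving \cref{EL5.1A}. Finally $\abs{\delta_{ij}^n}\le\abs{d_{ij}}\sqrt n+1+I\le\Hat{C}_0\sqrt n$ for large $n$ with $\Hat{C}_0\df1+\max_{(i,j)\in\cE}\abs{d_{ij}}$, which is \cref{EL5.1B}, and $\Tilde{N}_{ij}^n=N_{ij}^n+\delta_{ij}^n\in\NN$ for large $n$ as noted above.

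The main obstacle is the continuous existence step, since it is precisely there that one must exploit both the presence of an abandonment class ($\cI_\circ\ne\cI$) and the connectivity of $\cG$; the separation/contagion argument packages this cleanly. By contrast the integer rounding, which might look like the delicate point because of the exact constraint \cref{EL5.1C}, turns out to be elementary once one notices that the pool constraints decouple and that undershooting floors can be repaired by additions that never decrease the relevant capacities.
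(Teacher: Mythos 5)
Your proposal is correct, and its overall architecture matches the paper's: first solve the continuous ``flux'' problem (find real increments with zero pool sums and strictly positive weighted class sums on $\cI_\circ$) via a theorem of the alternative, rescale to beat $3\Tilde{C}_0$, then discretize with floors, letting an $O(1)$ per-pool correction restore the exact pool-sum constraint \cref{EL5.1C}. Where you genuinely diverge is in how the dual alternative is refuted. The paper invokes \cite[Theorem~21.1]{Rock46} to produce nonnegative multipliers $\varkappa_i$, then exploits the tree structure to walk along a path from a class with $\varkappa_{\Hat\imath}>0$ to the class outside $\cI_\circ$, killing the multipliers one by one through hand-crafted perturbations of $\psi$ and a slightly delicate induction (``WLOG $\varkappa_{i_2}=0$''). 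You instead use Stiemke's lemma, observe that $(\ker T)^\perp$ is the row space of the pool-sum map---so the dual vector must have entries $c_{ij}=\alpha_j$ depending only on the pool---and then run a contagion argument: the joint support of $(w,\alpha)$ is closed under taking neighbours in $\cG$, hence by connectivity is either empty or everything, and the existence of a class with positive abandonment rate (which can never lie in the support) forces it to be empty, contradicting $w\ne0$. This buys two things: it replaces the paper's explicit coordinate perturbations and induction with a single structural fact about the row space, and it uses only connectivity of $\cG$ rather than the tree property. The discretizations differ only in bookkeeping---you floor every entry and repair each pool by adding the (nonnegative) deficit to one class, correctly noting that this can only increase the capacities in \cref{EL5.1A}, whereas the paper designates the last class of each pool to absorb the rounding error---and both yield \cref{EL5.1B} with a constant independent of $n$. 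Your verification that nonnegativity and integrality of $\Tilde{N}_{ij}^n$ are automatic for large $n$ (since $N_{ij}^n$ is of order $n$ by $\xi^*_{ij}>0$, $\nu_j>0$) is also sound.
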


\begin{proof}
Suppose, without loss of generality (WLOG),  that $\cI_\circ=\{1,\dotsc, I-1\}$.
We claim that there exists a collection
$\{\psi_{ij}\in\RR\colon j\in\cJ(i)\,,\; i\in\cI\}$
of real numbers and a constant $C_0>0$, satisfying
$\sum_{j\in\cJ(i)}\mu_{ij}\psi_{ij}>C_0$ for all $i\in\cI_\circ$,
and
\begin{equation}\label{PL5.1A}
\sum_{i\in\cI(j)}\psi_{ij}\;=\;0\qquad\forall\,j\in\cJ\,.
\end{equation}
To prove the claim we use the argument of contradiction.
If not, then by \cite[Theorem~21.1]{Rock46} there exists a collection
of nonnegative real numbers $\varkappa_i$, $i=1,\dotsc,\cI$, such that
\begin{equation}\label{PL5.1B}
\sum_{i\in\cI_\circ} \varkappa_i\sum_{j\in\cJ(i)}\mu_{ij}\psi_{ij}\;\le\;0
\end{equation}
for all $\{\psi_{ij}\}$ satisfying \cref{PL5.1A}, and $\varkappa_{\Hat\imath}>0$
for some $\Hat\imath\in\cI_\circ$.
Since $\cG$ is a tree, there exists a pair of finite sequences
$i_1,\dotsc,i_\ell$ and
$j_1,\dotsc,j_{\ell-1}$ such that $\Hat\imath=i_1$, $i_\ell=I$,
and $i_k\sim j_k$, $i_{k+1}\sim j_k$ for $k=1,\dotsc,\ell-1$.
Choosing $\psi_{i_\ell j_{\ell-1}}=-1$, $\psi_{i_{\ell-1} j_{\ell-1}}=1$,
$\psi_{i j_{\ell-1}}=0$ if $i\notin\{i_{\ell-1},i_\ell\}$,
and $\psi_{i j}=0$ if $j\ne j_{\ell-1}$, it follows from \cref{PL5.1B} that
$\varkappa_{i_{\ell-1}}=0$. 
Thus, WLOG, we may suppose that
$\varkappa_{i_2}=0$.
But then replacing $\psi_{i_1,j_1}$ with $\psi_{i_1,j_1}+C$,
and $\psi_{i_2,j_1}$ with $\psi_{i_2,j_1}-C$, the new set of numbers
$\{\psi_{ij}\}$ satisfies \cref{PL5.1A}. Therefore, by \cref{PL5.1B}
we must have
\begin{equation*}
\sum_{i\in\cI_\circ} \varkappa_i\sum_{j\in\cJ(i)}\mu_{ij}\psi_{ij}
+ \varkappa_{i_1} \mu_{i_1 j_1} C\;\le\;0
\end{equation*}
for all $C\in\RR$, which is impossible since $\varkappa_{i_1} \mu_{i_1 j_1}>0$.
This proves the claim.

Scaling $\{\psi_{ij}\}$ by multiplying with a constant, we may assume that
\begin{equation}\label{PL5.1C}
\sum_{j\in\cJ(i)}\mu_{ij}\psi_{ij}\;>\;3\,\Tilde{C}_0\qquad\forall\,i\in\cI_\circ\,.
\end{equation}
For each $j\in\cJ$,
if $\cI(j)$  is a singleton, i.e., $\cI(j)=\{i_1\}$, then
we define $\Tilde{N}_{i_1j}^n \df N_{i_1j}^n$.
Otherwise, if $\cI(j)=\{i_1,\dotsc,i_\ell\}$, then we let
$\Tilde{N}_{i_kj}^n \df N_{i_kj}^n+\bigl\lfloor \psi_{i_kj}\sqrt{n}\bigr\rfloor$
for $k=1,\dotsc,\ell-1$, and
$\Tilde{N}_{i_\ell j}^n\df N_j^n -\sum_{k=1}^{\ell-1}\Tilde{N}_{i_kj}^n$.
It is clear then that \cref{EL5.1A} holds for all sufficiently large $n$
by \cref{PL5.1C}, while \cref{EL5.1B,EL5.1C} hold by construction.
This completes the proof.
\end{proof}

\begin{definition}\label{D-SDPf}
Let
$\{\Tilde{N}_{ij}^n\}$ be as in \cref{L5.1},
and $\Tilde{N}_i^n\df \sum_{j\in \cI(i)} \Tilde{N}_{ij}^n$ for $i\in\cI$.
Let $\sZ^n$ denote the class of Markov policies $z$ satisfying
\begin{align*}
z_{ij}(x) &\;\le\; \Tilde{N}_{ij}^n \quad\forall\, i\sim j\,,
\quad\text{and}~ \sum_{j\in\cJ(i)} z_{ij}(x)=x_i\,,
\qquad \text{if~} x_i \;\le\; \Tilde{N}_i^n\\[5pt]
z_{ij}(x) &\;\ge\; \Tilde{N}_{ij}^n \quad \forall\, i\sim j\,,
\qquad \text{if~} x_i \;>\; \Tilde{N}_i^n\,.
\end{align*}
We refer to this class of  Markov policies as
\emph{balanced saturation policies} (BSPs). 
\end{definition}

We remark that if all $\gamma_i>0$ for $i \in \cI$, then in \cref{D-SDPf},
we may replace $\Tilde{N}_{ij}^n$ and $\Tilde{N}^n_i$ by $N^n_{ij}$ and
$\Bar{N}^n_i$, respectively.
Note that by \cref{L5.1}, the quantities $\Tilde{N}^n_{ij}$ and $\Tilde{N}_i$
are within $\order(\sqrt{n})$ of the quantities $N^n_{ij}$ and $\Bar{N}_i$,
which can be regarded as the `steady-state' allocations for the $n^{\text{th}}$ system.
Thus, in the class of BSPs, if $\gamma_i>0$ for some $i$, then the scheduling policy
$z$ is determined using the `shifted' steady-state allocations
$\Tilde{N}^n_{ij}$ and $\Tilde{N}_i$. 

Note that the stabilizing policy for the `N' network in \cite{AP15} belongs
to the class of BSPs.
As another example, for the `M' network, if $\gamma_i>0$ for some $i=1,2$,
the scheduling policy
$z = z(x)$, $x\in \ZZ^2_+$, defined by
\begin{align*}
& z_{11} \;=\; x_1 \wedge N_1^n\,, \\
& z_{12} \;=\; \begin{cases} 
(x_1 - N_1^n)^+ \wedge \Tilde{N}_{12}^n &  \text{if} \  x_2 \ge \Tilde{N}^n_2 \\
(x_1 - N_1^n)^+ \wedge (x_2 - N^n_3)^+ & \text{otherwise} \,, 
\end{cases} \\
&z_{22} \;=\; \begin{cases} 
(x_2 - N_3^n)^+ \wedge \Tilde{N}_{22}^n & \text{if} \  x_1 \ge \Tilde{N}^n_1 \\
(x_2 - N_3^n)^+ \wedge (x_1 - N^n_1)^+ & \text{otherwise} \,,
\end{cases}  \\
& z_{23} \;=\; x_2 \wedge N^n_3\,,
\end{align*}
is a BSP. 
If $\gamma_i>0$  for $i=1,2$, then in the scheduling policy above,
we can replace $\Tilde{N}^n_{ij}$ and $\Tilde{N}_i^n$ by $N^n_{ij}$
and $\bar{N}_i^n$, respectively. 

Using the function $\Hat{x}^n$ in \cref{D-hatx},
we can write the generator $\widehat\cL_n^{z}$ of the diffusion-scaled
state process $\Hat{X}^n$ under the policy $z \in \sZ^n$  as 
\begin{equation} \label{E-gen2} 
\widehat\cL_n^{z} f(\Hat{x})
\;=\; \cL_n^{z} f\bigl(\Hat{x}^n(x)\bigr)\,,
\end{equation}
where $\cL_n^{z}$ is as defined in \cref{E-cL}.

Recall that a $\Rd$-valued Markov process $\{M_t\colon t\ge 0\}$ is called
\emph{exponentially ergodic} if it possesses an invariant probability 
measure $\overline\uppi(\D y)$ satisfying
\begin{equation*}
\lim_{t\to\infty}\;e^{\kappa t}\,\bnorm{P_t(x,\cdot)
-\overline\uppi(\cdot)}_{\mathrm{TV}} \;=\;0\qquad \forall\,x \in \Rd\,,
\end{equation*}
for some $\kappa>0$, 
where $P_t(x,\cdot)\df \mathbb{P}^x(M_t\in \cdot)$ denotes the transition probability
of $M_t$,  
and $\norm{\cdot}_{\mathrm{TV}}$ denotes the total variation norm. 

\begin{proposition}\label{P5.1}
Let $\widehat\cL_n^z$ denote the generator
of the diffusion-scaled state process $\Hat{X}^{n}$ under a BSP $z\in\sZ^n$.
Let $\widetilde\Lyap_{\epsilon,\beta}$ be as in \cref{Lyapk}, with $\beta\in\RR^I$
a positive vector. 
There exists $\epsilon>0$, and positive constants $C_0$ and $C_1$ such that
\begin{equation} \label{EP5.1A}
\widehat\cL_n^z\, \widetilde\Lyap_{\epsilon,\beta}(\Hat{x}) \;\le\;
C_0  - C_1\, \widetilde\Lyap_{\epsilon,\beta}(\Hat{x})
\qquad\forall\,\Hat{x}\in \sS^n \,,\quad\forall\,n\ge n_0\,.
\end{equation} 
The process $\Hat{X}^n$ is exponentially ergodic and admits a unique
invariant probability measure $\Hat\uppi^n$ satisfying
\begin{equation}\label{EP5.1B}
\lim_{t\to\infty}\, e^{\kappa t} \,\bnorm{P^n_t (x, \cdot)
- \Hat{\uppi}^n(\cdot)}_{\mathrm{TV}} \;=\;0\,, \quad x \in \RR^I\,,
\end{equation}
for any $\kappa<C_1$,
where $P^n_t (x, \cdot)$ denotes the transition probability of $\Hat{X}^n$.
\end{proposition}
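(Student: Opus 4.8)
The plan is to reduce everything to the single Foster--Lyapunov estimate \cref{EP5.1A} for the exponential test function $\widetilde\Lyap_{\epsilon,\beta}$, and then read off exponential ergodicity from it. Since $\widetilde\Lyap_{\epsilon,\beta}\ge1$ is inf-compact and the diffusion-scaled chain $\Hat X^n$ is a non-explosive, irreducible continuous-time Markov chain on the lattice $\sS^n$ (finite total jump rate on compacts, all transitions $\pm e_i/\sqrt n$ accessible under a BSP), the drift inequality \cref{EP5.1A} feeds directly into the continuous-time Foster--Lyapunov (Down--Meyn--Tweedie) criterion to produce a unique invariant measure together with the geometric bound \cref{EP5.1B} for every $\kappa<C_1$. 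Thus the entire burden is to prove \cref{EP5.1A} uniformly for $n\ge n_0$, for a suitable $\epsilon>0$ and the weight $\beta$ supplied by the structural estimate \cref{E-structural}.

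First I would expand the generator. Writing $\phi\df\log\widetilde\Lyap_{\epsilon,\beta}$, one has $\nabla\phi$ globally bounded, $\partial_i\phi(\Hat x)=\epsilon\,\beta_i\Hat x_i/\norm{\Hat x}_\beta+o(1)$ and $\nabla\phi(\Hat x)\cdot\Hat x=\epsilon\norm{\Hat x}_\beta+O(1)$ for large $\norm{\Hat x}_\beta$. Using the increments $\pm e_i/\sqrt n$ with the rates in \cref{E-cL}, I write $\widehat\cL_n^z\widetilde\Lyap_{\epsilon,\beta}=\widetilde\Lyap_{\epsilon,\beta}\sum_i\{\lambda^n_i(\E^{\Delta^{+}_i}-1)+(\sum_j\mu^n_{ij}z_{ij}+\gamma^n_iq_i)(\E^{\Delta^{-}_i}-1)\}$, with $\Delta^{\pm}_i\df\phi(\Hat x\pm e_i/\sqrt n)-\phi(\Hat x)=O(1/\sqrt n)$, and apply $\E^{\Delta}-1=\Delta+\tfrac12\Delta^2+O(\Delta^3)$. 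Since the arrival/service rates are $O(n)$ and the abandonment rate is $O(\sqrt n\,\norm{\Hat x})$ (by \cref{L2.2}), the cubic remainder is $O(\epsilon^3/\sqrt n)+O(\epsilon^3\norm{\Hat x}/n)$, and I obtain the clean identity
\[
\frac{\widehat\cL_n^z\widetilde\Lyap_{\epsilon,\beta}(\Hat x)}{\widetilde\Lyap_{\epsilon,\beta}(\Hat x)}
=\sum_{i\in\cI}\mathscr A^n_{i,1}(\Hat x_i,\Hat z)\,\partial_i\phi(\Hat x)
+\sum_{i\in\cI}\mathscr A^n_{i,2}(\Hat x_i,\Hat z)\bigl[(\partial_i\phi)^2+\partial_{ii}\phi\bigr](\Hat x)+E_n(\Hat x)\,,
\]
with $E_n$ uniformly negligible. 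Because $\nabla\phi$ is bounded and $\mathscr A^n_{i,2}\to\lambda_i$ (\cref{L2.5}), the second sum is $O(\epsilon^2)$, so everything hinges on the \emph{radial drift} $\sum_i\beta_i\Hat x_i\mathscr A^n_{i,1}$.

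The heart of the proof is a pre-limit analogue of \cref{E-structural}: there exist $\delta_0>0$, $R_0>0$, $n_0$ such that
\[
\sum_{i\in\cI}\beta_i\,\Hat x_i\,\mathscr A^n_{i,1}(\Hat x_i,\Hat z)\;\le\;c_0-c_1\norm{\Hat x}_\beta^2\,\Ind_{\cK_\delta^c}(\Hat x)-\delta_0\,\norm{\Hat x}_\beta\,\Ind_{\cK_\delta}(\Hat x)
\]
for $\norm{\Hat x}_\beta\ge R_0$ and $n\ge n_0$, where $\Hat z$ is the BSP action. On the moderate range this follows by transferring \cref{E-structural} (for $\kappa=2$, so $\sum_i\beta_i\Hat x_i b_i=\tfrac12 b\cdot\nabla\Lyap_{2,\beta}$) through the uniform-on-compacts convergence \cref{L2.5}, giving the quadratic term off the cone $\cK_\delta$. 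For the far range I argue directly on the lattice using the two branches of \cref{D-SDPf}. Off the cone and on the under-loaded part of $\cK_\delta$ ($e\cdot\Hat x<0$), all queues vanish, the saturation cap $z_{ij}\le\Tilde N^n_{ij}$ forces $\sum_j\mu^n_{ij}\Hat z_{ij}\le C-\mu_{\min}\abs{\Hat x_i}$ whenever $\Hat x_i$ is large negative, so $\mathscr A^n_{i,1}\gtrsim\abs{\Hat x_i}$ and the pairing with $\Hat x_i<0$ is quadratically negative. On the over-loaded cone $\cK_{\delta,+}$ the large coordinates sit in the saturated branch $z_{ij}\ge\Tilde N^n_{ij}$; for those with $\gamma_i>0$ the abandonment term $-\gamma^n_i\Hat q_i$ gives a quadratic gain, while for the abandonment-free classes $i\in\cI_\circ$ the shifted-allocation bound \cref{EL5.1A} yields the uniform constant drift $\mathscr A^n_{i,1}\le-\Tilde C_0$, contributing the linear term $-\Tilde C_0\beta_i\Hat x_i$; bounded coordinates contribute $O(1)$. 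Since $\nabla\phi\cdot\Hat x\sim\epsilon\norm{\Hat x}_\beta$, this radial bound forces the first sum in the displayed identity below $-\epsilon\delta_0+o(1)$, and choosing $\epsilon$ small enough that the $O(\epsilon^2)$ curvature term and $E_n$ are dominated gives $\widehat\cL_n^z\widetilde\Lyap_{\epsilon,\beta}/\widetilde\Lyap_{\epsilon,\beta}\le-\tfrac12\epsilon\delta_0$ for $\norm{\Hat x}_\beta\ge R_0$, i.e.\ \cref{EP5.1A} with $C_1=\tfrac12\epsilon\delta_0$.

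The main obstacle is precisely the over-loaded cone $\cK_{\delta,+}$ for the abandonment-free classes. There the limiting estimate \cref{E-structural} is only neutral (this is exactly the regime where \cref{E4.3} can fail), and the stabilizing effect comes from the extra service in \cref{EL5.1A}, which is an $O(\sqrt n)$ correction invisible to the diffusion limit; hence one cannot argue by \cref{L2.5} and must work at the pre-limit level far from the origin, where $\norm{\Hat x}_\beta$ may be of order $\sqrt n$ and the diffusion approximation degrades. The delicate points are to verify that each over-loaded abandonment-free coordinate is genuinely pinned to its saturated branch so that \cref{EL5.1A} applies, to control the tree-coupling through $\Psi$ (via the leaf-elimination form \cref{E-drift3}) so that service redistributed from other classes cannot manufacture a compensating positive radial drift, and to secure $\delta_0,R_0,n_0$ uniformly in $n$ together with the uniform negligibility of $E_n$ and of $\mathscr A^n_{i,2}-\lambda_i$. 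Once the uniform radial bound is in place the remaining passage from \cref{EP5.1A} to \cref{EP5.1B} is routine.
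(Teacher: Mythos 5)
Your high-level architecture is the same as the paper's (prove the drift inequality \cref{EP5.1A} for $\widetilde\Lyap_{\epsilon,\beta}$, then get \cref{EP5.1B} from a standard Foster--Lyapunov theorem, as in \cite{Douc-09}), and you correctly isolate the three stabilizing mechanisms of a BSP: the saturation cap ($z_{ij}\le\Tilde N^n_{ij}$, $\sum_j z_{ij}=x_i$) when $x_i\le\Tilde N^n_i$, abandonment when $x_i>\Tilde N^n_i$ and $\gamma_i>0$, and the shifted allocations of \cref{EL5.1A} when $\gamma_i=0$. But the way you propose to prove the drift bound has genuine gaps. First, every appeal you make to the limiting diffusion is illegitimate for a BSP: \cref{L2.2,L2.3,L2.5} are statements about actions in $\bcZn$, i.e.\ under the JWC condition, and a BSP is in general \emph{not} jointly work conserving. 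Whenever some class $i$ is overloaded while a pool $j$ with $j\nsim i$ serves only underloaded classes, one has $q_i>0$ and $y_j>0$ simultaneously, so $\Hat\vartheta^n>0$ and in fact of the same order as $\norm{\Hat{x}}$; this happens even on compact sets of $\Hat{x}$. Consequently the BSP drift $\mathscr{A}^n_{i,1}(\cdot,\Hat{z})$ cannot be identified with $\Breve{\mathscr{A}}^n_{i,1}(\cdot,u)$ for any $u\in\Act$, and \cref{E-structural} (a statement about $b(x,u)$, $u\in\Act$) gives no information about it --- so your ``transfer through \cref{L2.5}'' step fails, and for the same reason your claim that queues vanish off the cone $\cK_\delta$ is false. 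The paper's proof never touches $b$, $\Psi$, the leaf-elimination form, or the cone: it works entirely on the lattice, in coordinates $\doublehat{x}_i=(x_i-\Tilde N^n_i)/\sqrt{n}$ centered at the shifted allocations.

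Second, your claim that the curvature term is uniformly $\order(\epsilon^2)$ because ``$\mathscr{A}^n_{i,2}\to\lambda_i$'' is wrong precisely where the inequality must hold: for classes with $\gamma_i>0$ the jump rate $\gamma^n_i q_i$ grows linearly in the state, so $\mathscr{A}^n_{i,2}$ is unbounded on $\sS^n$ and the second-order contribution grows like $\epsilon^2\bigl(1+\norm{\Hat{x}}/\sqrt{n}\bigr)$; it must be absorbed by the first-order negative drift, which is exactly why the paper tracks the total-jump-rate term explicitly and proves \emph{quadratic} (not merely linear) negativity in the saturation and abandonment branches. Finally, and most importantly, what you defer as ``delicate points'' is the actual content of the proposition. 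In the paper these are settled by a purely per-coordinate case analysis relative to $\Tilde N^n_i$: if $x_i<\Tilde N^n_i$, the BSP constraints alone bound the radial term $\doublehat{x}_i\bigl(\lambda^n_i-\sum_j\mu^n_{ij}z_{ij}-\gamma^n_i q_i\bigr)$ by $-\Tilde{C}_0\sqrt{n}\,\doublehat{x}_i-\bigl(\min_{j}\mu^n_{ij}\bigr)\sqrt{n}\,\abs{\doublehat{x}_i}^2$; if $x_i\ge\Tilde N^n_i$, work conservation pins the allocation --- either class $i$ exceeds its available capacity $\sum_j\Breve{z}_{ij}$, with $\Breve{z}_{ij}\df N^n_j-\sum_{i'\ne i}z_{i'j}$, forcing $z_{ij}=\Breve{z}_{ij}$, or it does not, forcing $q_i=0$ --- and all cross-class dependence enters through the nonnegative quantities $\Breve{z}_{ij}-\Tilde N^n_{ij}$, whose sign allows them to be discarded. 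No control of ``tree-coupling through $\Psi$'' and no uniform-in-$n$ cone estimate is needed. As it stands, your proposal reduces the proposition to exactly these unproven sub-claims rather than establishing them, so it is not a complete proof.
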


\begin{proof}
Throughout the proof we use, without further mention, the fact that there exists
a constant $\Tilde{C}_0$ such that
\begin{equation*}
\begin{split}
\babss{\lambda_i^{n} - \sum_{j\in\cJ(i)}\mu_{ij}^{n} N_{ij}^n}
&\;\le\; \Tilde{C}_0\,\sqrt{n}\,,\\[5pt]
\babs{nx_i^* -\Bar{N}_i^n}\;=\;\babss{nx_i^* - \sum_{j\in\cJ(i)} N_{ij}^n}
&\;\le\; \Tilde{C}_0\,\sqrt{n}
\end{split}
\end{equation*}
for all $i\in\cI$ and all sufficiently large $n\in\NN$.
This follows by \cref{HWpara}.

Recall the collection $\{\Tilde{N}_{ij}^n\in\NN\,,\;(i,j)\in\cE\}$
constructed in \cref{L5.1} with respect to the constant $\Tilde{C}_0$ given above.
Define
$\Breve{x}_i=\Breve{x}_i^n(x)\df x_i - \Tilde{N}_i^n$,
$\doublehat{x}_i\df \frac{1}{\sqrt{n}}\Breve{x}_i^n$,
and let
\begin{equation*}
\widehat\Lyap_{\epsilon,\beta}(x)\;\df\;
\widetilde\Lyap_{\epsilon,\beta}(\doublehat{x})\,.
\end{equation*}
Using the identity
\begin{equation}\label{PP5.1Ba}
f(x\pm e_i) - f(x) \mp \partial_i f(x)
\;=\; \int_0^1 (1-t)\, \partial_{ii} f(x\pm t e_i) \,\D{t}\,,
\end{equation}
we obtain
\begin{equation}\label{PP5.1Bd}
\Babs{\widehat\Lyap_{\epsilon,\beta}(x\pm e_i)
- \widehat\Lyap_{\epsilon,\beta}(x) \mp
\epsilon\tfrac{\beta_i}{\sqrt{n}} \doublehat{x}_i \phi_\beta(\doublehat{x})\,
\widehat\Lyap_{\epsilon,\beta}(x)}
\;\le\; \tfrac{1}{n}\epsilon^2\,\Tilde\kappa_1\,\widehat\Lyap_{\epsilon,\beta}(x)
\end{equation}
for some constant $\Tilde\kappa_1>0$, and all $\epsilon\in(0,1)$, with
\begin{equation*}
\phi_\beta(x) \;\df\; \frac{2+\norm{x}_\beta^2}{
\bigl(1+\norm{x}_\beta^2\bigr)^{\nicefrac{3}{2}}}\,. 
\end{equation*}

Fix $n\in\NN$.
By \cref{E-cL}, with 
\begin{equation*}
q_i  \;=\; q_i(x_i) \;=\; x_i - \sum_{j\in\cJ(i)} z_{ij}
\end{equation*}
for $i \in \cI$ (see \cref{Td-xn}),
and using \cref{PP5.1Bd}, we obtain
\begin{align}\label{PP5.1Be} 
\cL_n^z\, \widehat\Lyap_{\epsilon,\beta}(x) &\;\le\; \epsilon
\sum_{i\in\cI} \biggl[\lambda^{n}_i
\Bigl(\tfrac{\beta_i}{\sqrt{n}} \doublehat{x}_i \phi_\beta(\doublehat{x})
+\tfrac{1}{n}\epsilon\,\Tilde\kappa_1\Bigr)
 + \sum_{j\in \cJ(i)}\mu_{ij}^{n}z_{ij}\,
\Bigl(-\tfrac{\beta_i}{\sqrt{n}} \doublehat{x}_i \phi_\beta(\doublehat{x})
+\tfrac{1}{n}\epsilon\,\Tilde\kappa_1\Bigr) \nonumber\\[5pt]
&\mspace{300mu} +  \gamma_i^{n} q_i\,
\Bigl(-\tfrac{\beta_i}{\sqrt{n}}\doublehat{x}_i \phi_\beta(\doublehat{x})
+\tfrac{1}{n}\epsilon\,\Tilde\kappa_1\Bigr)\biggr]
\widehat\Lyap_{\epsilon,\beta}(x)
\nonumber \\[5pt]
&\;=\; \epsilon\,\widehat\Lyap_{\epsilon,\beta}(x)\,
\sum_{i\in\cI} \Bigl(\tfrac{\beta_i}{\sqrt{n}}\,
\phi_\beta(\doublehat{x})\,F_{n,i}^{(1)}(x)
+\tfrac{1}{n}\epsilon\,\Tilde\kappa_1\,F_{n,i}^{(2)}(x)\Bigr)\,,
\end{align}
where
\begin{equation}\label{PP5.1C}
\begin{split}
F_{n,i}^{(1)}(x) &\;\df\; 
\doublehat{x}_i
\biggl(\lambda_i^{n} - \sum_{j \in \cJ(i)} \mu_{ij}^{n} z_{ij}
- \gamma^{n}_i q_i \biggr)\,,\\[5pt]
F_{n,i}^{(2)}(x) &\;\df\;  \lambda_i^{n}
+ \sum_{j \in \cJ(i)} \mu_{ij}^{n} z_{ij} + \gamma^{n}_i q_i\,.
\end{split}
\end{equation}
It always holds that $z_{ij} \le x_i$
and $q_i\le x_i$ for all $(i,j)\in\cE$.
By \cref{HWpara}, for some constant
$\Tilde\kappa_2$ we have
\begin{equation}\label{PP5.1D}
\lambda_i^n+ \sum_{j \in \cJ(i)} \bigl(\mu_{ij}^{n} + \gamma^{n}_i \bigr)
\,\Tilde{N}_i^n
\;\le\; n\Tilde\kappa_2\qquad\forall\,i\in\cI\,,
\end{equation}
and all $n\in\NN$.
Thus, by \cref{PP5.1C,PP5.1D}, we obtain
\begin{align} \label{PP5.1E}
F_{n,i}^{(2)}(x) &\;\le\;  \lambda_i^{n} + \sum_{j \in \cJ(i)}
\mu_{ij}^{n} x_i + \gamma^{n}_i  x_i  \nonumber\\[5pt]
&\;=\;   \lambda_i^{n}
+ \Biggl(\sum_{j \in \cJ(i)} \mu_{ij}^{n} + \gamma^{n}_i \Biggr)
(\Tilde{N}_i^n + \Breve{x}_i) \nonumber\\
&\;\le\;  \Tilde\kappa_2\, (n+ \Breve{x}_i)\,.
\end{align}

We next calculate an estimate for $F_{n,i}^{(1)}$ in \cref{PP5.1C}.
Consider any $i\in\cI$.  Define
\begin{equation*}
\Breve{z}_{ij}\;\df\; N_j^n - \sum_{i'\ne i} z_{i'j}\,.
\end{equation*}
We distinguish three cases.

\smallskip\noindent\emph{Case~A.}
Suppose that $x_i<\Tilde{N}_i^n$.
We write
\begin{equation*}
\sum_{j\in\cJ(i)}\mu_{ij}^n z_{ij}^n \;=\;
\sum_{j\in\cJ(i)}\mu_{ij}^n \Tilde{N}_{ij}^n +
\sum_{j\in\cJ(i)}\mu_{ij}^n \bigl(z_{ij}^n-\Tilde{N}_{ij}^n\bigr)\,.
\end{equation*}
Note that $z_{ij}^n-\Tilde{N}_{ij}^n\le0$ and
$\Breve{x}_i\le 0$.
Therefore, we have
\begin{equation*}
- \Breve{x}_i\sum_{j\in\cJ(i)}\mu_{ij}^n \bigl(z_{ij}^n-\Tilde{N}_{ij}^n\bigr)
\;\le\;
- \Breve{x}_i
\biggl(\min_{j\in\cJ(i)}\,\mu_{ij}^n\biggr)
\bigl(x_i-\Tilde{N}_i^n)
\;=\; -  \biggl(\min_{j\in\cJ(i)}\,\mu_{ij}^n\biggr)\,\abs{\Breve{x}_i}^2\,.
\end{equation*}
We also have that
\begin{equation} \label{PP5.1F}
\lambda_i^n - \sum_{j\in\cJ(i)}\mu_{ij}^n \Tilde{N}_{ij}^n
\;=\; \lambda_i^n - \sum_{j\in\cJ(i)}\mu_{ij}^n N_{ij}^n
- \sum_{j\in\cJ(i)}\mu_{ij}^n (\Tilde{N}_{ij}^n-N_{ij}^n)
\;\le\;  - \Tilde{C}_0\,\sqrt{n}\,.
\end{equation}
Thus we obtain
\begin{align*}
F_{n,i}^{(1)}(x) &\;\le\;
- \Tilde{C}_0\,\sqrt{n}\,\doublehat{x}_i
-\sqrt{n}\,\biggl(\min_{j\in\cJ(i)}\,\mu_{ij}^n\biggr)\,\abs{\doublehat{x}_i}^{2}\,.
\end{align*}

\smallskip\noindent\emph{Case~B.}
Suppose that
\begin{equation*}
x_i\;\ge\;\Tilde{N}_i^n\qquad\text{and}\qquad
 x_i \;\ge\; \sum_{j\in\cJ(i)} \Breve{z}_{ij}\,.
\end{equation*}
Define
\begin{equation*}
\Breve{\zeta}_{ij}\;\df\; \Breve{z}_{ij}- \Tilde{N}_{ij}^n\,,
\end{equation*}
and note that $\Breve{\zeta}_{ij}\ge0$.
Then
$z_{ij} = \Breve{z}_{ij}=\Tilde{N}_{ij}^n+\Breve{\zeta}_{ij}$. 

Suppose first that $\gamma_i=0$. 
Then by \cref{PP5.1F}, we have
\begin{align*}
F_{n,i}^{(1)}(x) &\;\le\;  - \Tilde{C}_0\,\sqrt{n}\,\doublehat{x}_i
+ \doublehat{x}_i
\Biggl[\lambda_i^n - \sum_{j\,\in\,\cJ(i)} \mu_{ij}^n\,\Breve{z}_{ij}\Biggr]
\nonumber\\[5pt]
&\;\le\;  - \Tilde{C}_0\,\sqrt{n}\,\doublehat{x}_i
+ \doublehat{x}_i
\Biggl[\lambda_i^n - \sum_{j\,\in\,\cJ(i)} \mu_{ij}^n\,(\Tilde{N}_{ij}^n
+\Breve{\zeta}_{ij})\Biggr]
\nonumber\\[5pt]
&\;\le\;  - 2 \Tilde{C}_0\,\sqrt{n}\,\doublehat{x}_i
- \doublehat{x}_i \Biggl[\sum_{j\,\in\,\cJ(i)} \mu_{ij}^n\,\Breve{\zeta}_{ij}\Biggr]
\nonumber\\[5pt]
&\;\le\; - 2\Tilde{C}_0\,\sqrt{n}\,\doublehat{x}_i\,.
\end{align*}
Suppose now that $\gamma_i>0$. Then by \cref{PP5.1F}, we have
\begin{align*}
F_{n,i}^{(1)}(x) &\;\le\;  - \Tilde{C}_0\,\sqrt{n}\,\doublehat{x}_i
+ \doublehat{x}_i \Biggl[\lambda_i^n - \sum_{j\,\in\,\cJ(i)} \mu_{ij}^n\,\Breve{z}_{ij}
- \gamma_i^n \Biggl(x_i - \sum_{j\,\in\,\cJ(i)}\Breve{z}_{ij}\Biggr)\Biggr]
\nonumber\\[5pt]
&\;\le\;  - \Tilde{C}_0\,\sqrt{n}\,\doublehat{x}_i
+ \doublehat{x}_i \Biggl[\lambda_i^n - \sum_{j\,\in\,\cJ(i)} \mu_{ij}^n\,
(\Tilde{N}_{ij}^n+\Breve{\zeta}_{ij})
-\gamma_i^n \Biggl(\Breve{x}_i - \sum_{j\,\in\,\cJ(i)}\Breve{\zeta}_{ij}\Biggr)\Biggr]
\nonumber\\[5pt]
&\;\le\;  - 2 \Tilde{C}_0\,\sqrt{n}\,\doublehat{x}_i
- \doublehat{x}_i \Biggl[\sum_{j\,\in\,\cJ(i)} \mu_{ij}^n\,\Breve{\zeta}_{ij}
+\gamma_i^n
\Biggl(\doublehat{x}_i - \sum_{j\,\in\,\cJ(i)}\Breve{\zeta}_{ij}\Biggr)\Biggr]
\nonumber\\[5pt]
&\;\le\; - 2\Tilde{C}_0\,\sqrt{n}\,\doublehat{x}_i
- \sqrt{n}\,
\Bigl(\gamma_i^n\wedge\min_{j\in\cJ(i)}\,\mu_{ij}^n\Bigr)\,\abs{\doublehat{x}_i}^{2}\,.
\end{align*}

\smallskip\noindent\emph{Case~C.}
Suppose that
\begin{equation*}
x_i\;\ge\; \Tilde{N}_i^n\qquad\text{and}\qquad
 x_i \;<\; \sum_{j\in\cJ(i)} \Breve{z}_{ij}\,.
\end{equation*}
Let $\Hat\jmath\in\cJ(i)$ be arbitrary.
We have
\begin{align*}
F_{n,i}^{(1)}(x) &\;\le\; -\Tilde{C}_0\,\sqrt{n}\,\doublehat{x}_i
+ \doublehat{x}_i
\Biggl[\lambda_i^n - \sum_{j\,\in\,\cJ(i)\setminus\{\Hat\jmath\}} \mu_{ij}^n\,\Breve{z}_{ij}
- \mu_{i\Hat\jmath}^n
\Biggl(x_i - \sum_{j\,\in\,\cJ(i)\setminus\{\Hat\jmath\}}\Breve{z}_{ij}\Biggr)\Biggr]
\nonumber\\[5pt]
&\;\le\; - \Tilde{C}_0\,\sqrt{n}\,\doublehat{x}_i
+ \doublehat{x}_i \Biggl[\lambda_i^n - \sum_{j\,\in\,\cJ(i)\setminus\{\Hat\jmath\}}
\mu_{ij}^n\,(\Tilde{N}_{ij}^n+\Breve{\zeta}_{ij})
-\mu_{i\Hat\jmath}^n \Biggl(\Tilde{N}_{i\Hat\jmath}^n+\Breve{x}_i
- \sum_{j\,\in\,\cJ(i)\setminus\{\Hat\jmath\}}\Breve{\zeta}_{ij}\Biggr)\Biggr]
\nonumber\\[5pt]
&\;\le\; - 2\Tilde{C}_0\,\sqrt{n}\,\doublehat{x}_i
- \doublehat{x}_i
\Biggl[\sum_{j\,\in\,\cJ(i)\setminus\{\Hat\jmath\}} \mu_{ij}^n\,\Breve{\zeta}_{ij} +
\mu_{i\Hat\jmath}^n \Biggl(\Breve{x}_i
- \sum_{j\,\in\,\cJ(i)\setminus\{\Hat\jmath\}}\Breve{\zeta}_{ij}\Biggr)\Biggr]
\nonumber\\[5pt]
&\;\le\; - 2\Tilde{C}_0\,\sqrt{n}\,\doublehat{x}_i
- \sqrt{n}\,\biggl(\min_{j\in\cJ(i)}\,\mu_{ij}^n\biggr)\,\abs{\doublehat{x}_i}^{2}\,.
\end{align*}

From cases A--C, we obtain 
\begin{equation*}
\begin{aligned}
F_{n,i}^{(1)}(x) & \; \le\;  - 2 \Tilde{C}_0\,\sqrt{n}\,
\doublehat{x}_i\,\Ind_{\{\Breve{x}_i>0\}}
 - \sqrt{n}\,\biggl(\Tilde{C}_0\,\doublehat{x}_i
+\biggl(\min_{j\in\cJ(i)}\,\mu_{ij}^n\biggr)\,
\abs{\doublehat{x}_i}^{2}\biggr)\,\Ind_{\{\Breve{x}_i\le0\}}
\quad &\text{if\, }  \gamma_i=0\,, \\[5pt]
F_{n,i}^{(1)}(x) & \; \le\;  - \sqrt{n}\,\biggl(2 \Tilde{C}_0\,\doublehat{x}_i
+ \Bigl(\gamma_i^n\wedge\min_{j\in\cJ(i)}\,\mu_{ij}^n\Bigr)
\abs{\doublehat{x}_i}^{2}\biggr)\,\Ind_{\{\Breve{x}_i>0\}}\\[3pt]
&\mspace{200mu}
- \sqrt{n}\,\biggl(\Tilde{C}_0\,\doublehat{x}_i
+\biggl(\min_{j\in\cJ(i)}\,\mu_{ij}^n\biggr)\,
\abs{\doublehat{x}_i}^{2}\biggr)\,\Ind_{\{\Breve{x}_i\le0\}}
\quad & \text{if\, }  \gamma_i>0\,.
\end{aligned}
\end{equation*}
It is clear from these estimates
together with \cref{PP5.1Be,PP5.1E}, that, for $\varepsilon>0$ small enough,
there exist positive constants 
$M_k$, $k=0,1$, satisfying
\begin{equation*}
\cL_n^z\,\widehat\Lyap_{\epsilon,\beta}(x) \; \le\;  M_0
-M_1\,\widehat\Lyap_{\epsilon,\beta}(x)\qquad\forall\,x\in\ZZ^I_+ \,.
\end{equation*}

Define 
$\varsigma^n_i\df \sqrt{n} x_i^* - \tfrac{1}{\sqrt{n}}\Tilde{N}_i^n$. Then 
$\widehat\Lyap_{\epsilon,\beta}(x) =
\widetilde\Lyap_{\epsilon,\beta}(\Hat{x} +\varsigma^n)$.
Thus, using the definition in \cref{E-gen2,D-hatx},
we obtain
\begin{equation}\label{PP5.1I}
\widehat\cL^z_n\,\widetilde\Lyap_{\epsilon,\beta}(\Hat{x} +\varsigma^n  )  \; \le\; 
M_0-M_1\,\widetilde\Lyap_{\epsilon,\beta}(\Hat{x} +\varsigma^n)\,.
\end{equation}
Since $\abs{\varsigma^n_i}\le \Hat{C}_0$ by \cref{L5.1},
it is clear that \cref{EP5.1A} follows by \cref{PP5.1I}.

It is standard to show that \cref{EP5.1A} implies \cref{EP5.1B}.
One can apply, for example, equation (3.5) in \cite[Theorem~3.2]{Douc-09},
using $\varPsi_1(x)=x$ and $\varPsi_2(x)=1$.
This completes the proof.
\end{proof}

The following is immediate from \cref{P5.1}.

\begin{corollary}
If $z\in\sZ^n$ is a BSP, then for some $\varepsilon>0$ we have
\begin{equation} \label{EC5.1B}
\sup_{n\ge n_0} \limsup_{T \to \infty}\;\frac{1}{T}\;
\Exp^z \biggl[ \int_{0}^{T} \E^{\varepsilon\,\abs{\Hat{X}^{n}(s)}} \,\D{s} \biggr]
\;<\; \infty\,,
\end{equation}
and the same holds if we replace $\Hat{X}^n$ with
$\Hat{Q}^n$ or $\Hat{Y}^n$ in \cref{EC5.1B}.
In particular, the invariant probability
measure of the diffusion-scaled process $\Hat{X}^n(t)$ under a BSP
has finite moments of any order.
\end{corollary}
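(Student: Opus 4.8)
The plan is to integrate the Foster--Lyapunov drift inequality \cref{EP5.1A} supplied by \cref{P5.1}, and then to pass from the Lyapunov function $\widetilde\Lyap_{\epsilon,\beta}$ to the exponential weight $\E^{\varepsilon\abs{\,\cdot\,}}$. Fixing a BSP $z\in\sZ^n$, I would first apply Dynkin's formula to the Markov jump process $\Hat{X}^n$ (whose generator is $\widehat\cL_n^z$) and the smooth function $\widetilde\Lyap_{\epsilon,\beta}\ge1$, localizing along the exit times $\tau_R\df\inf\{t\ge0\colon\abs{\Hat{X}^n(t)}\ge R\}$. Using $\widehat\cL_n^z\widetilde\Lyap_{\epsilon,\beta}\le C_0-C_1\widetilde\Lyap_{\epsilon,\beta}$ and $\widetilde\Lyap_{\epsilon,\beta}\ge0$ gives $C_1\Exp^z[\int_0^{T\wedge\tau_R}\widetilde\Lyap_{\epsilon,\beta}(\Hat{X}^n(s))\,\D{s}]\le C_0 T+\widetilde\Lyap_{\epsilon,\beta}(\Hat{X}^n(0))$; letting $R\to\infty$ by monotone convergence, dividing by $T$, and taking $\limsup_{T\to\infty}$ yields $\limsup_T\tfrac1T\Exp^z[\int_0^T\widetilde\Lyap_{\epsilon,\beta}(\Hat{X}^n(s))\,\D{s}]\le C_0/C_1$. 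Crucially this is uniform in $n\ge n_0$, since $C_0$, $C_1$, $n_0$ from \cref{P5.1} do not depend on $n$.

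Next I would note that, since $\beta$ is positive, $\norm{x}_\beta$ is comparable to $\abs{x}$, and that $\norm{x}_\beta^2(1+\norm{x}_\beta^2)^{-1/2}\ge\tfrac1{\sqrt2}\norm{x}_\beta$ once $\norm{x}_\beta\ge1$; hence there exist $\varepsilon>0$ and a constant $c$ with $\E^{\varepsilon\abs{x}}\le c\,\widetilde\Lyap_{\epsilon,\beta}(x)$ for all $x\in\RR^I$. Substituting into the previous bound gives \cref{EC5.1B} for $\Hat{X}^n$.

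For the queue and idleness processes I would avoid any appeal to a joint-work-conservation estimate and instead read the bounds off the BSP structure of \cref{D-SDPf} directly. Writing $\doublehat{x}=\Hat{x}+\varsigma^n$ with $\abs{\varsigma^n}$ bounded uniformly in $n$ by \cref{L5.1}, the definition of a BSP gives $\Hat{Q}^n_i\le(\doublehat{x}_i)^+$ (the $i$th queue is empty when $x_i\le\Tilde{N}_i^n$ and is at most $\Breve{x}_i$ otherwise), while the identity $\sum_{i\in\cI(j)}\Tilde{N}_{ij}^n=N_j^n$ together with work conservation gives $\Hat{Y}^n_j\le\norm{\doublehat{x}}$. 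These pointwise estimates furnish a constant $K\ge1$, independent of $n$, with $\abs{\Hat{Q}^n}\vee\abs{\Hat{Y}^n}\le K(1+\abs{\Hat{X}^n})$, so that a single, suitably small $\varepsilon$ (e.g.\ $\varepsilon/K$) transfers \cref{EC5.1B} to $\Hat{Q}^n$ and $\Hat{Y}^n$. Finally, starting the process from the invariant law $\Hat\uppi^n$ of \cref{P5.1} and using stationarity identifies the time-average with $\int\E^{\varepsilon\abs{x}}\,\Hat\uppi^n(\D{x})$, which is therefore finite, and since $\E^{\varepsilon\abs{x}}$ dominates every monomial, $\Hat\uppi^n$ has finite moments of all orders. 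I expect the only genuinely delicate point to be the justification of Dynkin's formula for the unbounded $\widetilde\Lyap_{\epsilon,\beta}$ and the uniformity in $n$, which the localization-plus-monotone-convergence step is designed to handle; the remaining steps are routine consequences of \cref{P5.1,L5.1,D-SDPf}.
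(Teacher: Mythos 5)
Your proposal is correct and is essentially the argument the paper has in mind: the paper states this corollary as immediate from \cref{P5.1}, and your steps --- integrating the drift inequality \cref{EP5.1A} via Dynkin's formula with localization (uniform in $n\ge n_0$ since $C_0,C_1$ are $n$-independent), the comparison $\E^{\varepsilon\abs{x}}\le c\,\widetilde\Lyap_{\epsilon,\beta}(x)$ for a suitably smaller $\varepsilon$, and the pointwise bounds $\abs{\Hat{Q}^n}\vee\abs{\Hat{Y}^n}\le K(1+\abs{\Hat{X}^n})$ read off the BSP structure (correctly avoiding \cref{L2.2}, which applies only to EJWC actions, whereas a BSP need not be JWC) --- are exactly the standard content behind that assertion. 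One small repair to your last step: starting from $\Hat\uppi^n$ and invoking stationarity to identify the time average with $\int\E^{\varepsilon\abs{x}}\,\Hat\uppi^n(\D{x})$ is circular as written (integrating your Dynkin bound over $\Hat\uppi^n(\D x)$ presupposes the finiteness you are trying to prove), so instead either combine the a.s.\ ergodic theorem for the positive recurrent chain with Fatou's lemma under the law started from a fixed $x$, or invoke the standard fact that the inequality $\widehat\cL_n^z\,\widetilde\Lyap_{\epsilon,\beta}\le C_0-C_1\widetilde\Lyap_{\epsilon,\beta}$ itself yields $\Hat\uppi^n(\widetilde\Lyap_{\epsilon,\beta})\le C_0/C_1$.
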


\begin{remark}
A Foster--Lyapunov property analogous to \cref{E-stablev} can be obtained for the
diffusion-scaled state process $\Hat{X}^n$ under a BSP policy. 
Let $\Lyap_{\kappa,\beta}$ be as in \cref{Lyapk} and $\widehat\cL_n^z$
as in \cref{P5.1}.
One can show, by a slight modification of the proof of \cref{P5.1},
that for each $\kappa>1$, there exist positive constants $C_0$ and $C_1$
depending only on $\kappa$ and $n_0 \in \NN$, such that for all $z\in\sZ^n$, we have 
\begin{enumerate}
\item[(i)]
If $\gamma_i >0$ for all $i \in \cI$, then 
\begin{equation*}
\widehat\cL_n^z \Lyap_{\kappa, \beta}(\Hat{x}) \;\le\;
C_0  - C_1\, \Lyap_{\kappa,\beta} (\Hat{x})
\qquad\forall\,\Hat{x}\in \sS^n \,,\quad\forall\,n\ge n_0\,.
\end{equation*}  
\item[(ii)] If $\gamma_i >0$ for some $i \in \cI$ (but not all $i$), then 
\begin{equation} \label{EP5.1A1}
\widehat\cL_n^z \Lyap_{\kappa, \beta}(\Hat{x}) \;\le\;
C_0  - C_1\, \Lyap_{\kappa-1,\beta} (\Hat{x})
\qquad\forall\,\Hat{x}\in \sS^n \,,\quad\forall\,n\ge n_0\,.
\end{equation}  
\end{enumerate}
The Foster--Lyapunov property in \cref{EP5.1A1} can be equivalently written as
\begin{equation*}
\widehat\cL_n^z \Lyap_{\kappa, \beta}(\Hat{x}) \;\le\; C_0  - C_1'\,
\bigl(\Lyap_{\kappa,\beta} (\Hat{x}) \bigr)^{\frac{\kappa-1}{\kappa}}
\qquad\forall\,\Hat{x}\in \sS^n \,,\quad\forall\,n\ge n_0\,,
\end{equation*}  
for some constant $C_1'$. 
Such Foster--Lyapunov properties appear in the studies on subexponential rate of
convergence of Markov processes (see, e.g., \cite{Douc-09}, \cite{Hairer-16}
and references therein).
Thus \cref{EP5.1A1} provides an interesting example to that rich theory.  
On the other hand,  \cref{EP5.1A} with
the exponential function $\widetilde\Lyap_{\epsilon,\beta}$ is stronger,
and implies exponential ergodicity of the processes $\hat{X}^n(t)$ under a BSP. 
\end{remark}

\section{Ergodic Properties of the \texorpdfstring{$n^{\mathrm{th}}$}{th} System}
\label{S6}

\subsection{Moment bounds for general multiclass multi-pool networks}
Recall the moment bounds for the diffusion limit in
\cref{E-BQYBS}.
We prove the analogous property for the
diffusion-scaled state process $\Hat{X}^n$.

\begin{proposition}\label{P6.1}
For any $\kappa\ge1$,
there  exist constants
$\widetilde{C}_0$ and $\widetilde{C}_1$, depending only on
$\kappa$, such that
\begin{equation}\label{EP6.1A}
\Exp^{Z^{n}}\biggl[\int_{0}^{T}\abs{\Hat{X}^{n}(s)}^\kappa\,\D{s}\biggr]
\;\le\; \widetilde{C}_0\,\bigl(T+\abs{\Hat{X}^{n}(0)}^\kappa\bigr)
+\widetilde{C}_1
\Exp^{Z^{n}}\biggl[\int_{0}^{T}
\bigl(1+\abs{\Hat{Q}^{n}(s)}+\abs{\Hat{Y}^{n}(s)}\bigr)^{\kappa}\,\D{s}\biggr]
\end{equation}
for all $n\in\NN$,
and for any sequence $\{Z^{n}\in\fZ^{n},\;n\in\NN\}$.
\end{proposition}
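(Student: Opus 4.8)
The plan is to mirror the derivation of the diffusion moment bound \cref{E-BQYBS} from the structural estimate \cref{E-structural}, but carried out at the level of the prelimit dynamics. Fix $\kappa\ge1$ and let $\Lyap_{\kappa,\beta}$ be a Lyapunov function as in \cref{Lyapk}, with the positive weight vector $\beta$ to be chosen below. Applying the Kunita--Watanabe formula \cref{E-KW} with $f=\Lyap_{\kappa,\beta}$ and taking expectations, so that the martingale parts vanish, the proposition reduces to the pointwise drift inequality
\[
\mathscr{A}^n\Lyap_{\kappa,\beta}(\Hat{x},\Hat{z}) + \mathfrak{d}_n(\Hat{x},\Hat{z})
\;\le\; \widetilde{C}_0 - \widetilde{C}_1\,\Lyap_{\kappa,\beta}(\Hat{x})
+ \widetilde{C}_2\bigl(1+\abs{\Hat{q}^n}+\abs{\Hat{y}^n}\bigr)^{\kappa}\,,
\]
holding uniformly in $n\in\NN$ and over all admissible actions $\Hat{z}\in\hcZn(\Hat{x})$, where $\mathfrak{d}_n$ is the density of the compensator of $\sum_{s\le\cdot}\mathscr{D}\Lyap_{\kappa,\beta}(\Hat{X}^n,s)$, i.e.\ the third- and higher-order jump remainder. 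Once this is established, integrating, using $\Exp^{Z^n}[\Lyap_{\kappa,\beta}(\Hat{X}^n(T))]\ge0$ and $\Lyap_{\kappa,\beta}(\Hat{x})\ge c\abs{\Hat{x}}^{\kappa}-c'$, and rearranging yields \cref{EP6.1A}; a standard stopping-time localization at $\tau_R\df\inf\{t:\abs{\Hat{X}^n(t)}>R\}$ followed by $R\to\infty$ (monotone convergence and Fatou) justifies the manipulation and the a priori finiteness of the integral.

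The heart of the matter is the drift inequality, and the key point is that it must hold for \emph{all} admissible actions, including those that are not jointly work conserving, for which $\Hat{q}^n$ and $\Hat{y}^n$ may be simultaneously large. First I would record that, since $\cG$ is a tree, every $\Hat{z}^n$ arising from $z\in\cZn(x)$ satisfies $\Hat{z}^n=\Psi(\Hat{x}^n-\Hat{q}^n,-\Hat{y}^n)$; this is \cref{EL2.2B}, which holds \emph{without} the JWC restriction because the row and column sums of $\Hat{z}^n$ are exactly $\Hat{x}^n-\Hat{q}^n$ and $-\Hat{y}^n$, so by linearity of $\Psi$ one has $\abs{\Hat{z}^n}\le C(\abs{\Hat{x}^n}+\abs{\Hat{q}^n}+\abs{\Hat{y}^n})$. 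Substituting this into $\mathscr{A}^n_{i,1}$ and invoking the leaf-elimination algebra of \cite[Lemma~4.3]{AP15}, which is a linear identity in the arguments of $\Psi$ with coefficients $\mu^n_{ij}$ and hence valid verbatim in the prelimit, the first-order part takes the form $\mathscr{A}^n_{\cdot,1}=\ell^n-B_1^n\Hat{x}+(B_1^n-\varGamma^n)\Hat{q}^n+B_2^n\Hat{y}^n$, where $B_1^n$ is lower triangular with positive diagonal entries $\mu^n_{ij_i}$ that converge to those of $B_1$ and are therefore bounded below by a constant independent of $n$. I would then split $\mathscr{A}^n_{\cdot,1}\cdot\nabla\Lyap_{\kappa,\beta}$ into the restoring part $(\ell^n-B_1^n\Hat{x})\cdot\nabla\Lyap_{\kappa,\beta}$ and the queue/idleness part $[(B_1^n-\varGamma^n)\Hat{q}^n+B_2^n\Hat{y}^n]\cdot\nabla\Lyap_{\kappa,\beta}$.

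For the restoring part, a triangular diagonal scaling $\beta_i=\varepsilon^{\,i}$ with $\varepsilon$ small (depending only on the uniform bounds on the entries of $B_1^n$, hence on $\kappa$ and the fixed network parameters, not on $n$) renders the quadratic form $\Hat{x}\mapsto\sum_i\beta_i\Hat{x}_i[B_1^n\Hat{x}]_i$ bounded below by $c\norm{\Hat{x}}_\beta^2$ uniformly in $n$, giving $(\ell^n-B_1^n\Hat{x})\cdot\nabla\Lyap_{\kappa,\beta}(\Hat{x})\le c_0-c_1\Lyap_{\kappa,\beta}(\Hat{x})$ on all of $\RR^I$; this is where the tree topology and the positivity $\mu_{ij_i}>0$ enter, and it requires no abandonment assumption, consistent with the hypothesis-free statement. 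The queue/idleness part is dominated by $C(\abs{\Hat{q}^n}+\abs{\Hat{y}^n})\abs{\nabla\Lyap_{\kappa,\beta}}\le C(\abs{\Hat{q}^n}+\abs{\Hat{y}^n})\Lyap_{\kappa,\beta}^{(\kappa-1)/\kappa}$ and absorbed by Young's inequality into $\epsilon\Lyap_{\kappa,\beta}+C_\epsilon(\abs{\Hat{q}^n}+\abs{\Hat{y}^n})^{\kappa}$. It remains to control the second-order term $\sum_i\mathscr{A}^n_{i,2}\partial_{ii}\Lyap_{\kappa,\beta}$ together with $\mathfrak{d}_n$. The cleanest uniform-in-$n$ treatment is to keep these jointly as the \emph{integral} form of the Taylor remainder of $\Lyap_{\kappa,\beta}(\Hat{x}\pm e_i/\sqrt{n})-\Lyap_{\kappa,\beta}(\Hat{x})$ tested against the jump intensities $\lambda_i^n$, $\mu^n_{ij}z_{ij}$, $\gamma_i^n q_i$; since these intensities are $O\bigl(n(1+(\abs{\Hat{x}}+\abs{\Hat{q}^n})/\sqrt{n})\bigr)$ while the remainder carries a factor $\tfrac1n$ and $\abs{\partial_{ii}\Lyap_{\kappa,\beta}}\lesssim(1+\abs{\Hat{x}})^{\kappa-2}$, the whole contribution is at most $C(1+\abs{\Hat{x}})^{\kappa-2}+\tfrac{C}{\sqrt{n}}(\abs{\Hat{x}}+\abs{\Hat{q}^n}+\abs{\Hat{y}^n})(1+\abs{\Hat{x}})^{\kappa-2}$, which Young's inequality again splits into $\epsilon\Lyap_{\kappa,\beta}+C_\epsilon(1+\abs{\Hat{q}^n}+\abs{\Hat{y}^n})^{\kappa}$. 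Choosing the $\epsilon$'s so that the total coefficient of $\Lyap_{\kappa,\beta}$ stays below $-c_1/2$ completes the drift inequality, and then the integration and rearrangement described in the first paragraph finish the proof.

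I expect the main obstacle to be precisely this uniform pointwise drift inequality. One must (i) exploit the tree structure through the exact representation $\Hat{z}^n=\Psi(\Hat{x}^n-\Hat{q}^n,-\Hat{y}^n)$ so as to accommodate non-JWC actions with $\Hat{q}^n$ and $\Hat{y}^n$ both large, (ii) obtain restoring for the linear part with constants independent of $n$ through a single weighted norm, and (iii) dominate the jump remainder uniformly in $n$ — in particular for small $n$, where the scaled jumps $1/\sqrt{n}$ are not small and one cannot truncate the Taylor expansion at second order, so the integral remainder must be carried throughout. The subsequent integration, rearrangement, and localization are routine.
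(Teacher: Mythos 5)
Your proposal is correct and follows essentially the same route as the paper's proof: a weighted Lyapunov function comparable to $\norm{x}_\beta^\kappa$, the Kunita--Watanabe formula, the representation $\Hat{z}^n=\Psi(\Hat{x}^n-\Hat{q}^n,-\Hat{y}^n)$ valid without the JWC restriction, the lower-triangular Hurwitz structure from the leaf-elimination algebra to produce the restoring term, Young's inequality to absorb the queue/idleness contributions, and a Taylor-remainder bound on the jump compensator using that the jump intensities are $\order(n)$ plus queue terms while each jump has size $\nicefrac{1}{\sqrt{n}}$. The only difference is bookkeeping: the paper routes the prelimit drift through the limiting $b(x,u)$ via the parameterization $(\Hat{u}^c,\Hat{u}^s,\Hat{\Theta}^n)$ and an $o(1)$ comparison of $\mathscr{A}^n$ with its limiting analogue, whereas you apply leaf elimination directly to the prelimit rates to obtain $n$-dependent matrices $B_1^n$, $B_2^n$ --- an immaterial variation of the same argument.
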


\begin{proof}
Let 
$\mathscr{V}(x)\;\df\; \sum_{i\in\cI} \beta_{i} \mathscr{V}_{i}(x_i)$,
$x\in \RR^I$, where $\beta_{i}$, $i\in\cI$, are positive constants
to be determined later, and
$\mathscr{V}_{i}(x_i)=\abs{x_{i}}^{\kappa}$ when $\kappa>1$, whereas 
$\mathscr{V}_{i}(x_i)=\frac{\abs{x_{i}}^{2}}{\sqrt{\delta +\abs{x_{i}}^{2}}}$
for some $\delta>0$ when $\kappa=1$.
By applying It\^o's formula on $\mathscr{V}$, we obtain from
\cref{hatXn-1} that for $t\ge 0$,
\begin{equation}\label{EP6.1B}
\Exp\bigl[\mathscr{V}(\Hat{X}^{n}(t))\bigr] \;=\;
\Exp\bigl[\mathscr{V}(\Hat{X}^{n}(0)) \bigr] +
\Exp\biggl[\int_{0}^{t} \mathscr{A}^n \mathscr{V} 
\bigl(\Hat{X}^{n}(s), \Hat{Z}^{n}(s)\bigr)\,\D{s} \biggr] 
+\Exp \Biggl[\sum_{s\le t} \mathscr{D} \mathscr{V}(\Hat{X}^n,s) \Biggr]\,,  
\end{equation}
where $\mathscr{A}^n$ is given in \cref{D-scrA}, and
$\mathscr{D} \mathscr{V}(\Hat{X}^n,s)$ is as in \cref{E-scrD}.

Let $\Hat{\Theta}^{n}\df e\cdot\Hat{Q}^{n}\wedge e\cdot\Hat{Y}^{n}$. 
Then 
$\Hat{Q}^{n}= \bigl(\Hat{\Theta}^{n}+(e\cdot\Hat{X}^{n})^{+}\bigr)\Hat{u}^{c}$ 
and $\Hat{Y}^{n}= \bigl(\Hat{\Theta}^{n}+(e\cdot\Hat{X}^{n})^{-}\bigr)\Hat{u}^{s}$
for some $(\Hat{u}^c,\Hat{u}^s)\in\Act$ by \cref{baleq-hat}.
Also by the linearity of the map $\Psi$
in \cref{E-Psi}, we obtain
\begin{align}\label{EP6.1C}
\Hat{Z}^{n}&\;=\; \Psi(\Hat{X}^{n}-\Hat{Q}^{n},- \Hat{Y}^{n})\nonumber\\
&\;=\; \Psi\bigl(\Hat{X}^{n}-(e\cdot\Hat{X}^{n})^{+}\Hat{u}^{c},
-(e\cdot\Hat{X}^{n})^{-}\Hat{u}^{s}\bigr)
-\Hat{\Theta}^{n}\,\Psi(\Hat{u}^c,\Hat{u}^s)\,.
\end{align}

Define
\begin{equation*}
\Bar{\mathscr{A}}_{i,1}\bigl(x_i,\{z_{ij}\}\bigr)\;\df\;
\ell_{i} - \sum_{i \in \cJ(i)}\mu_{ij} z_{ij}-\gamma_{i}
\biggl(x_i- \sum_{j \in \cJ(i)} z_{ij}\biggr)\,, 
\end{equation*}
and
\begin{equation}\label{EP6.1D}
\Bar{\mathscr{A}} \mathscr{V} (x,z) \;\df \; \sum_{i\,\in\,\cI}
\Bigl(\Bar{\mathscr{A}}_{i,1}(x_i,\{z_{ij}\}) \partial_i \mathscr{V}(x)
+\lambda_i\,\partial_{ii} \mathscr{V}(x) \Bigr)\,.
\end{equation}
By the convergence of the parameters in \cref{HWpara}, we have
\begin{equation*}
\begin{split}
\babs{\Bar{\mathscr{A}}_{i,1}\bigl(x_i,\{z_{ij}\}\bigr)
-\mathscr{A}^{n}_{i,1}(x_i,\{z_{ij}\})}
&\;\le\; c_{1}(n)\bigl(1+\norm{x}\bigr)\,,\\[5pt]
\babs{\lambda_i
-\mathscr{A}^{n}_{i,2}(x_i,\{z_{ij}\})}
&\;\le\; c_{1}(n)\bigl(1+\norm{x}\bigr)\,,
\end{split}
\end{equation*}
for all $i\in\cI$,
for some constant $c_{1}(n)\searrow 0$ as $n\to\infty$.
Therefore
\begin{equation}\label{EP6.1E}
\babs{\Bar{\mathscr{A}} \mathscr{V} (x,z)-
\mathscr{A}^n\mathscr{V} (x,z)}\;\le\;
c'_{1}(n)\bigl(1+\norm{x}^\kappa\bigr)
\end{equation}
for some constant $c'_{1}(n)\searrow 0$ as $n\to\infty$.

Recall the drift representation $b(x,u)$ in \cref{E-drift2}.
By \cref{EP6.1C}, we obtain that for each $i$, 
\begin{equation}\label{EP6.1F}
\Bar{\mathscr{A}}_{i,1}\bigl(\Hat{X}^n_i,\{\Hat{Z}^n_{ij}\}\bigr)
\;=\;b_i\bigl(\Hat{X}^n,(\Hat{u}^c, \Hat{u}^s)\bigr)
+ \Hat{\Theta}^{n}\sum_{j \in \cJ(i)} \mu_{ij} {\Psi}_{ij}(\Hat{u}^c,\Hat{u}^s)
-\gamma_{i}\,\Hat{\Theta}^{n}\Hat{u}^c_{i} \,. 
\end{equation}
Since $- B_{1}$ in \cref{E-drift2} is lower diagonal and Hurwitz,
there exist positive constants $\beta_{i}$, $i\in\cI$, such
that 
\begin{equation*}
\nabla\mathscr{V}(x)\cdot B_{1}x \;\ge\; c_{2} \mathscr{V}(x)\,,
\end{equation*}
for some positive constant $c_2$. 
Thus, applying Young's inequality, after some simple calculations,
we obtain
\begin{equation}\label{EP6.1G}
\Bar{\mathscr{A}}\mathscr{V} 
\bigl(\Hat{X}^{n}, \Hat{Z}^{n}\bigr)
\;\le\; -c_{3} \mathscr{V}(\Hat{X}^n)
+ c_{4} \bigl(1+\abs{e\cdot\Hat{X}^{n}}^{\kappa} + (\Hat{\Theta}^{n})^{\kappa})
\end{equation}
for some positive constants $c_{3}$ and $c_{4}$.

Concerning the last term in \cref{EP6.1B}, we first note that by the definition of
$\mathscr{V}_{i}$,
since the jump size is of order $\frac{1}{\sqrt{n}}$, there exists a
positive constant $c_5$ such that 
$\sup_{|x'_i-x_i| \le 1} \abs{\mathscr{V}_{i}''(x'_i)} \le c_5 (1+|x_i|^{\kappa-2})$
for each $x_i \in \RR$. 
Then by the Taylor remainder theorem, we obtain that for each $i \in\cI$,
\begin{equation*}
\Delta\mathscr{V}_{i}\bigl(\Hat{X}^{n}_i(s)\bigr)
-\mathscr{V}_{i}'\bigl(\Hat{X}^{n}_i(s-)\bigr)\cdot\Delta\Hat{X}^{n}_{i}(s)
\;\le\; \frac{1}{2} \sup_{|x'_i - \Hat{X}^{n}_i(s-)| \le 1} \;
|\mathscr{V}_{i}''(x'_i)| (\Delta\Hat{X}^{n}_{i}(s))^2\,.
\end{equation*}
Thus, we have
\begin{align}\label{EP6.1H}
\Exp \Biggl[\sum_{s\le t} \mathscr{D} \mathscr{V}_{i}(\Hat{X}^n,s) \Biggr]
&\le\; \Exp \Biggl[ \sum_{s\le t} c_5 \biggl(1+
\abs{\Hat{X}^{n}_{i}(s-)}^{\kappa-2} \biggr) (\Delta\Hat{X}^{n}_{i}(s))^2  \Biggr]
\nonumber\\[5pt]
&=\; c_5 \Exp \biggl[ \int_0^t \biggl(1+ \abs{\Hat{X}^{n}_{i}(s-)}^{\kappa-2} \biggr)
\biggl(\frac{\lambda^{n}_i}{n} + \frac{1}{n}\sum_{j \in \cJ(i)}
\mu^{n}_{ij} Z^{n}_{ij}(s)
+ \frac{1}{n} \gamma^{n}_i Q^{n}_i(s) \biggr)\,\D{s} \biggr]
\nonumber\\[5pt]
&\le\; c_5 \Exp \biggl[ \int_0^t \biggl(1+ \abs{\Hat{X}^{n}_{i}(s-)}^{\kappa-2} \biggr)
\nonumber\\
&\mspace{130mu}
\biggl(\frac{\lambda^{n}_i}{n} + \frac{1}{n}\sum_{j \in \cJ(i)}
\mu^{n}_{ij} N^{n}_{j}
+ \frac{1}{n} \gamma^{n}_i \Bigl(\Hat{\Theta}^{n}(s)+
\bigl(e\cdot\Hat{X}^{n}(s)\bigr)^{+}\Bigr)
\biggr)\,\D{s} \biggr]
\nonumber\\[5pt]
&\le\; \Exp \biggl[ \int_0^t \biggl( \frac{c_3}{4}
\babs{\Hat{X}^{n}_{i}(s)}^\kappa
+ c_6 \Bigl(1+\abs{e\cdot\Hat{X}^{n}(s)}^\kappa +
\bigl(\Hat{\Theta}^{n}(s)\bigr)^{\kappa}\Bigr)
\biggr)\,\D{s} \bigg]\,, 
\end{align}
for some positive constant $c_6$, independent of $n$,
where in the first equality we use the fact that the optional martingale 
$[\Hat{X}^{n}_{i}]$
is the sum of the squares of the jumps, and that
$[\Hat{X}^{n}_{i}]-\langle\Hat{X}^{n}_{i}\rangle$ is a martingale,
and in the last inequality we use Young's inequality.

Therefore, the assertion of the proposition follows by combining
\cref{EP6.1B,EP6.1E,EP6.1G,EP6.1H}, and the
inequality
\begin{equation*}
1+\abs{e\cdot\Hat{X}^{n}}^{\kappa} + (\Hat{\Theta}^{n})^{\kappa}
\;\le\;
\bigl(1+\abs{\Hat{Q}^{n}}+\abs{\Hat{Y}^{n}}\bigr)^{\kappa}\,.\qedhere
\end{equation*}
\end{proof}

\subsection{Moment bounds for BQBS stable networks}

For the class of BQBS stable networks, the moment bound in \cref{E4.1}
holds for the limiting diffusion $X$. The following proposition shows that
the analogous moment bound also holds for the diffusion-scaled process
$\Hat{X}^n$ of this class of networks.

\begin{proposition}\label{P6.2}
Suppose that \cref{E4.3} holds.
Then \cref{P6.1} holds with \cref{EP6.1A} replaced by
\begin{equation}\label{EP6.2A}
\Exp^{Z^{n}}\biggl[\int_{0}^{T}\abs{\Hat{X}^{n}(s)}^\kappa\,\D{s}\biggr]
\;\le\; \widetilde{C}_0\,\bigl(T+\abs{\Hat{X}^{n}(0)}^\kappa\bigr)+\widetilde{C}_1
\Exp^{Z^{n}}\biggl[\int_{0}^{T}
\bigl(1+\abs{\Hat{Q}^{n}(s)}\bigr)^{\kappa}\,\D{s}\biggr]
\end{equation}
for all $n\in\NN$.
\end{proposition}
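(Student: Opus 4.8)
The plan is to reproduce the proof of \cref{P6.1} up to and including the It\^o expansion \cref{EP6.1B} and the identities \cref{EP6.1C,EP6.1F}, changing only the single step in which the drift estimate \cref{EP6.1G} was obtained. In \cref{P6.1} the idleness norm entered \cref{EP6.1G} through the summand $(e\cdot x)^{-}B_2u^s$ of the drift \cref{E-drift2}; it is precisely this contribution that the hypothesis \cref{E4.3} tames. Accordingly, I would take $\mathscr{V}=\Lyap_{\kappa,\beta}$ to be a Lyapunov function for which \cref{E4.3} holds, so that, with $\Hat{u}=(\Hat{u}^c,\Hat{u}^s)\in\Act$ the control pair from \cref{EP6.1C},
\begin{equation*}
b(\Hat{X}^n,\Hat{u})\cdot\nabla\mathscr{V}(\Hat{X}^n)\;\le\;
c_0 + c_1\,\mathscr{V}(\Hat{X}^n)\,\Ind_{\cK_{\delta,+}}(\Hat{X}^n)
- c_2\,\mathscr{V}(\Hat{X}^n)\,\Ind_{\cK^c_{\delta,+}}(\Hat{X}^n)\,.
\end{equation*}

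Two observations then drive the argument. First, on the cone $\cK_{\delta,+}$ one has $e\cdot\Hat{X}^n>\delta\,\abs{\Hat{X}^n}>0$, whence $\abs{\Hat{X}^n}\le\delta^{-1}(e\cdot\Hat{X}^n)^{+}$; since $\Hat{Q}^n\ge0$ and \cref{baleq-hat} give $(e\cdot\Hat{X}^n)^{+}\le e\cdot\Hat{Q}^n=\norm{\Hat{Q}^n}$, it follows that $\mathscr{V}(\Hat{X}^n)\le C_\delta\,\norm{\Hat{Q}^n}^{\kappa}$ on $\cK_{\delta,+}$. Rewriting $-c_2\mathscr{V}\Ind_{\cK^c_{\delta,+}}=-c_2\mathscr{V}+c_2\mathscr{V}\Ind_{\cK_{\delta,+}}$ and applying this bound to both indicator terms turns the display above into
\begin{equation*}
b(\Hat{X}^n,\Hat{u})\cdot\nabla\mathscr{V}(\Hat{X}^n)\;\le\;
c_0 - c_2\,\mathscr{V}(\Hat{X}^n) + C'\,\norm{\Hat{Q}^n}^{\kappa}\,.
\end{equation*}
Second, the correction terms in \cref{EP6.1F}, namely $\Hat{\Theta}^n\sum_{j}\mu_{ij}\Psi_{ij}(\Hat{u})-\gamma_i\Hat{\Theta}^n\Hat{u}^c_i$, contribute a term of the form $\Hat{\Theta}^n\,(\,\cdot\,)\cdot\nabla\mathscr{V}$ to $\Bar{\mathscr{A}}\mathscr{V}$; as the bracketed factor is bounded, $\abs{\nabla\mathscr{V}}\le C\abs{\Hat{X}^n}^{\kappa-1}$, and $\Hat{\Theta}^n\le e\cdot\Hat{Q}^n=\norm{\Hat{Q}^n}$, Young's inequality dominates it by $\varepsilon\abs{\Hat{X}^n}^{\kappa}+C_\varepsilon\norm{\Hat{Q}^n}^{\kappa}$, and the same reasoning disposes of the bounded second-order term $\sum_i\lambda_i\partial_{ii}\mathscr{V}$. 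Choosing $\varepsilon<c_2$ and absorbing, one reaches $\Bar{\mathscr{A}}\mathscr{V}(\Hat{X}^n,\Hat{Z}^n)\le c_0'-c_3\abs{\Hat{X}^n}^{\kappa}+C'(1+\norm{\Hat{Q}^n}^{\kappa})$, the sharpened analogue of \cref{EP6.1G} from which the idleness norm $\abs{\Hat{Y}^n}$ has vanished.

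From this point the argument would follow \cref{P6.1} line for line. The discrepancy $\mathscr{A}^n\mathscr{V}-\Bar{\mathscr{A}}\mathscr{V}$ is absorbed for $n\ge n_0$ by \cref{EP6.1E} at the cost of a fraction of the negative term. The jump term \cref{EP6.1H} already involves the queue alone, since the service jump rates obey $\tfrac1n\mu^n_{ij}Z^n_{ij}\le\tfrac1n\mu^n_{ij}N^n_j=\order(1)$ while the abandonment rate is $\tfrac1n\gamma^n_iQ^n_i=\tfrac{\gamma^n_i}{\sqrt n}\Hat{Q}^n_i$, so that no idleness enters the quadratic variation and Young's inequality again produces $\tfrac{c_3}{4}\abs{\Hat{X}^n}^{\kappa}+C(1+\norm{\Hat{Q}^n}^{\kappa})$. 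Substituting these bounds into \cref{EP6.1B}, discarding the nonnegative quantity $\Exp[\mathscr{V}(\Hat{X}^n(T))]$, and using $\mathscr{V}(\Hat{X}^n(0))\le C\abs{\Hat{X}^n(0)}^{\kappa}$ together with $\norm{\Hat{Q}^n}\le\sqrt{I}\,\abs{\Hat{Q}^n}$, one arrives at \cref{EP6.2A}; the case $\kappa=1$ is treated, as in \cref{P6.1}, with $\mathscr{V}_i(x_i)=\abs{x_i}^2/\sqrt{\delta+\abs{x_i}^2}$. The one genuinely new point---and the main obstacle---is the cone step: under \cref{E4.3} the growth part of the drift estimate is confined to $\cK_{\delta,+}$, where it is dominated by $\norm{\Hat{Q}^n}^{\kappa}$, whereas for general networks only the symmetric cone $\cK_{\delta}$ of \cref{E-structural} is available and the idleness direction $(e\cdot x)^{-}$ contributes irreducibly, which is exactly why \cref{EP6.1A} cannot be improved to \cref{EP6.2A} without the BQBS hypothesis.
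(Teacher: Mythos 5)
Your proposal is correct and follows essentially the same route as the paper: replace the general structural estimate by the BQBS condition \cref{E4.3}, control the cone term and the $\Hat{\Theta}^n$-correction terms from \cref{EP6.1F} via the identity $\norm{\Hat{Q}^{n}}=\Hat{\Theta}^{n}+(e\cdot\Hat{X}^{n})^{+}$ together with Young's inequality, and then conclude exactly as in \cref{P6.1}. The paper compresses this into the two displays \cref{EP6.2B,EP6.2C} and the remark that the rest is unchanged, while you additionally spell out the jump-term and discrepancy steps, but the substance is identical.
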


\begin{proof}
Recall the definition of the cone $\cK_{\delta,+}$ in \cref{E-cone+}.
By \cref{E4.3,EP6.1D,EP6.1F}, we obtain
\begin{align}\label{EP6.2B}
\Bar{\mathscr{A}} \mathscr{V} \bigl(\Hat{X}^{n},Z^n \bigr)&\;\le\;
c_0 \Bigl(1+  \mathscr{V}(\Hat{X}^{n}) \Ind_{\cK_{\delta,+}}(\Hat{X}^{n}) +
\abs{\grad\mathscr{V}(\Hat{X}^{n})}\,\Hat{\Theta}^{n}\Bigr)
- c_1 \mathscr{V}(\Hat{X}^{n})\,\Ind_{\cK^c_{\delta,+}}(\Hat{X}^{n})\nonumber\\[5pt]
&\;\le\; (c_0\vee c_1) \Bigl(1+  \mathscr{V}(\Hat{X}^{n})
\Ind_{\cK_{\delta,+}}(\Hat{X}^{n}) +
\abs{\grad\mathscr{V}(\Hat{X}^{n})}\,\Hat{\Theta}^{n}\Bigr)
- c_1 \mathscr{V}(\Hat{X}^{n})
\end{align}
for some positive constants $c_0$ and $c_1$.
Since
\begin{align*}
\norm{\Hat{Q}^{n}}
 &\;=\; \Hat{\Theta}^{n}+(e\cdot\Hat{X}^{n})^{+}\nonumber\\
&\;\ge\; \Hat{\Theta}^{n}+\delta \abs{\Hat{X}^{n}}
 \,\Ind_{\cK_{\delta,+}}(\Hat{X}^{n})\,,
\end{align*}
we obtain by \cref{EP6.2B} that
\begin{align}\label{EP6.2C}
\Bar{\mathscr{A}} \mathscr{V} \bigl(\Hat{X}^{n},Z^n \bigr)&\;\le\;
(c_0\vee c_1) \Bigl(1+ \norm{\Hat{Q}^{n}} +
\abs{\grad\mathscr{V}(\Hat{X}^{n})}\,\norm{\Hat{Q}^{n}}\Bigr)
- c_1 \mathscr{V}(\Hat{X}^{n})\nonumber\\[5pt]
&\;\le\;c_0'\, \bigl(1+  \mathscr{V}(\Hat{Q}^{n})\bigr)- c'_1 \mathscr{V}(\Hat{X}^{n})
\end{align}
for some positive constants $c_i'$, $i=0,1$, where the second inequality
in \cref{EP6.2C} follows by applying Young's inequality.
The rest follows as in the proof of \cref{P6.1}.
\end{proof}

As a consequence of \cref{P6.2}, we also obtain the following moment
bound for the idleness process.  
\begin{corollary}\label{C6.1}
If \cref{E4.3} holds,
then there exist some constants $\widetilde{C}_0'>0$
and $\widetilde{C}_1'>0$ such that
\begin{equation*}
\Exp^{Z^{n}}\biggl[\int_{0}^{T}\abs{\Hat{Y}^{n}(s)}^\kappa\,\D{s}\biggr]
\;\le\; \widetilde{C}_0'\,\bigl(T+\abs{\Hat{Y}^{n}(0)}^\kappa\bigr)+\widetilde{C}_1'
\Exp^{Z^{n}}\biggl[\int_{0}^{T}
\bigl(1+\abs{\Hat{Q}^{n}(s)}\bigr)^{\kappa}\,\D{s}\biggr]\,,
\end{equation*}
for all $n \in \NN$, and 
for any sequence $\{Z^{n}\in\fZ^{n},\;n\in\NN\}$. 
\end{corollary}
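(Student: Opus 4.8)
The plan is to control the idleness pointwise in time by the queue together with the \emph{full} state, and then to trade the state moment for a queue moment by invoking \cref{P6.2}. The conceptual point is that idleness cannot be dominated by the queue alone (outside the JWC region one may have many idle servers while the queue is small), so the state must serve as an intermediary; this is exactly where the BQBS hypothesis \cref{E4.3} enters, since it is \cref{P6.2} that converts a state moment into a queue moment.

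First I would record the required pointwise estimate. The identities derived in the proof of \cref{L2.2}, namely \cref{EL2.2A}, are valid for \emph{every} admissible action (not only jointly work conserving ones), and give, at each time $s$,
\begin{equation*}
\norm{\Hat{Y}^n(s)} \;=\; \Hat\Theta^n(s) + \bigl(e\cdot\Hat{X}^n(s)\bigr)^-\,,
\qquad \Hat\Theta^n\;\df\; e\cdot\Hat{Q}^n\wedge e\cdot\Hat{Y}^n\,,
\end{equation*}
with $\Hat\Theta^n$ as in the proof of \cref{P6.1}. Since $\Hat{Q}^n\ge0$ componentwise we have $\Hat\Theta^n\le e\cdot\Hat{Q}^n=\norm{\Hat{Q}^n}$, while $(e\cdot\Hat{X}^n)^-\le\abs{e\cdot\Hat{X}^n}\le\sqrt{I}\,\abs{\Hat{X}^n}$ by Cauchy--Schwarz. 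Hence, for a constant $C$ depending only on $I$,
\begin{equation*}
\abs{\Hat{Y}^n(s)} \;\le\; \norm{\Hat{Y}^n(s)} \;\le\; C\bigl(\abs{\Hat{Q}^n(s)} + \abs{\Hat{X}^n(s)}\bigr)\qquad\forall\,s\ge0\,,
\end{equation*}
valid for any $Z^n\in\fZ^n$ and every $n$.

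The remaining steps are routine. Raising this bound to the power $\kappa$ via $(a+b)^\kappa\le 2^{\kappa-1}(a^\kappa+b^\kappa)$, integrating over $[0,T]$ and applying $\Exp^{Z^n}$, I obtain
\begin{equation*}
\Exp^{Z^{n}}\biggl[\int_{0}^{T}\abs{\Hat{Y}^{n}}^\kappa\,\D{s}\biggr]
\;\le\; C_\kappa\,\Exp^{Z^{n}}\biggl[\int_{0}^{T}\abs{\Hat{Q}^{n}}^\kappa\,\D{s}\biggr]
+ C_\kappa\,\Exp^{Z^{n}}\biggl[\int_{0}^{T}\abs{\Hat{X}^{n}}^\kappa\,\D{s}\biggr]\,.
\end{equation*}
I would then apply \cref{P6.2} to the last integral---the only place where \cref{E4.3} is used---and collect the queue contributions, using the equivalence of $\norm{\,\cdot\,}$ and $\abs{\,\cdot\,}$ and the trivial bound $\abs{\Hat{Q}^n}^\kappa\le(1+\abs{\Hat{Q}^n})^\kappa$, to reach the asserted inequality, with the $T$-independent initial term $\abs{\Hat{X}^n(0)}^\kappa$ supplied by \cref{P6.2}.

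I do not expect a genuine obstacle here: the substantive analysis is entirely contained in \cref{P6.2}, and the corollary is essentially a one-line consequence once the pointwise idleness bound above is in place. The only point demanding a little care is the bookkeeping of the initial data, since \cref{P6.2} naturally contributes $\abs{\Hat{X}^n(0)}^\kappa$; this term is constant in $T$ and therefore immaterial in the ergodic ($\tfrac1T$, $T\to\infty$) applications for which the corollary is intended.
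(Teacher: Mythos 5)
Your proof is correct and follows essentially the same route as the paper's own (one-line) proof, which likewise rests on the fundamental identities $\norm{\Hat{Q}^{n}}= \Hat{\Theta}^{n}+(e\cdot\Hat{X}^{n})^{+}$ and $\norm{\Hat{Y}^{n}}= \Hat{\Theta}^{n}+(e\cdot\Hat{X}^{n})^{-}$ (so that $\Hat{\Theta}^{n}\le\norm{\Hat{Q}^{n}}$ and the idleness is controlled by queue plus state) combined with \cref{EP6.2A} of \cref{P6.2}. The initial-data point you flag is present in the paper's version as well, which silently carries $\abs{\Hat{X}^{n}(0)}^\kappa$ rather than $\abs{\Hat{Y}^{n}(0)}^\kappa$ and disregards the difference for exactly the ergodic reason you give.
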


\begin{proof}
The claim follows from \cref{EP6.2A} and the fundamental identities
$\norm{\Hat{Q}^{n}}= \Hat{\Theta}^{n}+(e\cdot\Hat{X}^{n})^{+}$ 
and $\norm{\Hat{Y}^{n}}= \Hat{\Theta}^{n}+(e\cdot\Hat{X}^{n})^{-}$. 
\end{proof}

\subsection{Convergence of mean empirical measures}

For the process $\Hat{X}^n$ under a scheduling policy
$Z^n$, and with $U^n$ as in \cref{D-upara},
we define the mean empirical measures 
\begin{equation} \label{E-emp}
\Phi^{Z^n}_{T}(A\times B)\;\df\;
\frac{1}{T}\,\Exp^{Z^n}\biggl[\int_{0}^{T}\Ind_{A\times B}
\bigl(\Hat{X}^{n}(t),U^{n}(t)\bigr)\,\D{t}\biggr]
\end{equation}
for Borel sets $A\subset\RR^{I}$ and $B\subset \Act$. Recall \cref{DEJWC}.
The lemma which follows
shows that if the long-run average first-order moment of the diffusion-scaled
state process under an EJWC scheduling policy is finite, then the mean empirical
measures $\Phi^{Z^n}_{T}$ are tight and converge
to an ergodic occupation measure corresponding to some stationary stable Markov
control for the limiting diffusion control problem.
This property is used in the proof of the lower bounds in \cref{T3.1,T3.2}.

\begin{lemma}\label{L6.1}
If under some sequence of scheduling policies 
$\{Z^{n},\;n\in\NN\}\subset\boldsymbol\fZ$, we have
\begin{equation}\label{L6.1A}
\sup_{n}\;\limsup_{T\to\infty}\;\frac{1}{T}\,
\Exp^{Z^{n}}\biggl[\int_{0}^{T}\babs{\Hat{X}^{n}(s)}\,\D{s}\biggr]\;<\,\infty\,,
\end{equation}
then $\bigl\{\Phi^{Z^n}_{T}\colon n\in \NN, \ T>0\bigr\}$ is tight,
and
any limit point $\uppi\in\cP(\RR^{I}\times\Act)$ of $\bigl\{\Phi^{Z^n}_{T}\bigr\}$
over a sequence $(n_k,T_k)$, with $n_k\to\infty$ and $T_k\to\infty$,
lies in $\eom$. 
\end{lemma}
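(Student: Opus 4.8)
The plan is to first establish tightness of the family, and then to identify every subsequential limit as an ergodic occupation measure by passing to the limit in the martingale identity \cref{E-KW} for the pre-limit generator, transferring to the limiting generator $\Lg^{u}$ via \cref{L2.3,L2.5}.

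\emph{Tightness.} Since $\Act$ is compact, the $\Act$-marginals are automatically tight, so it suffices to control the $\RR^{I}$-marginals. The first moment of the $\RR^{I}$-marginal of $\Phi^{Z^{n}}_{T}$ is precisely $\frac{1}{T}\Exp^{Z^{n}}\bigl[\int_{0}^{T}\abs{\Hat{X}^{n}(s)}\,\D{s}\bigr]$, which is bounded uniformly in $n$ (for $T$ large) by \cref{L6.1A}. By Markov's inequality, $\Phi^{Z^{n}}_{T}(\{\abs{x}>R\}\times\Act)\le K/R$ for a fixed $K$, so the family is tight and limit points exist.

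\emph{The martingale identity.} Fix $f\in\Cc^{\infty}_{c}(\RR^{I})$ with $\mathrm{supp}\,f\subset B_{R}$. Applying \cref{E-KW}, taking $\Exp^{Z^{n}}$, and dividing by $T$ gives
\[
\tfrac{1}{T}\Exp^{Z^{n}}\bigl[f(\Hat{X}^{n}(T))-f(\Hat{X}^{n}(0))\bigr]
=\tfrac{1}{T}\Exp^{Z^{n}}\Bigl[\int_{0}^{T}\mathscr{A}^{n}f\bigl(\Hat{X}^{n}(s),\Hat{Z}^{n}(s)\bigr)\,\D{s}\Bigr]
+\tfrac{1}{T}\Exp^{Z^{n}}\Bigl[\sum_{s\le T}\mathscr{D}f(\Hat{X}^{n},s)\Bigr].
\]
Because $f$ is bounded, the left-hand side is $\order(1/T)$. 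The term $\mathscr{D}f$ in \cref{E-scrD} is a second-order Taylor remainder, hence bounded by $C\norm{D^{3}f}_{\infty}\abs{\Delta\Hat{X}^{n}}^{3}=\order(n^{-3/2})$ per jump; since the total jump intensity is of order $n+\sqrt{n}\,\abs{\Hat{Q}^{n}}$, one gets $\frac{1}{T}\Exp^{Z^{n}}[\sum_{s\le T}\abs{\mathscr{D}f}]\le C n^{-1/2}+C n^{-1}\frac{1}{T}\Exp^{Z^{n}}[\int_{0}^{T}\abs{\Hat{Q}^{n}}\,\D{s}]$. By \cref{L2.2}, $\abs{\Hat{Q}^{n}}\le\Tilde{M}_{0}\abs{\Hat{X}^{n}}$, so \cref{L6.1A} shows this jump contribution vanishes as $n\to\infty$.

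\emph{Transfer and passage to the limit.} Since $\mathscr{A}^{n}f$ vanishes off $\mathrm{supp}\,f\subset B_{R}$, and for $n$ large $B_{R}\subset\sqrt{n}\Breve{B}\subset\Breve\sS^n$, on the support the EJWC action (recall \cref{DEJWC}) is jointly work conserving, so \cref{L2.3} gives $\mathscr{A}^{n}f(\Hat{x},\Hat{z})=\Breve{\mathscr{A}}^{n}f\bigl(\Hat{x},u(\Hat{x},\Hat{z})\bigr)$ there. Hence the drift integral equals $\int_{\RR^{I}\times\Act}\Breve{\mathscr{A}}^{n}f\,\D\Phi^{Z^{n}}_{T}$. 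By \cref{L2.5}, $\Breve{\mathscr{A}}^{n}f\to\Lg^{u}f$ uniformly on the compact set $\Bar{B}_{R}\times\Act$ where both functions are supported; combined with the weak convergence $\Phi^{Z^{n_{k}}}_{T_{k}}\to\uppi$ and a triangle-inequality estimate, $\int\Breve{\mathscr{A}}^{n_{k}}f\,\D\Phi^{Z^{n_{k}}}_{T_{k}}\to\int\Lg^{u}f\,\D\uppi$. Passing to the limit along $(n_{k},T_{k})$ in the identity above then yields $\int_{\RR^{I}\times\Act}\Lg^{u}f(x)\,\uppi(\D{x},\D{u})=0$ for every $f\in\Cc^{\infty}_{c}(\RR^{I})$, i.e.\ $\uppi\in\eom$. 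The crux is the faithful transfer of the pre-limit generator to $\Lg^{u}$: the identity $\mathscr{A}^{n}f=\Breve{\mathscr{A}}^{n}f\circ u$ holds only in the JWC region, so one must exploit the compact support of $f$ together with the fact that $\sqrt{n}\Breve{B}$ exhausts $\RR^{I}$, while simultaneously showing the third-order jump remainder is negligible---this last point is exactly where \cref{L6.1A} (through \cref{L2.2}) is indispensable.
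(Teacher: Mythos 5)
Your proposal is correct and follows essentially the same route as the paper's proof: tightness from the first-moment bound in \cref{L6.1A}, the identity \cref{E-KW} combined with \cref{L2.3} (valid since $f$ has compact support inside $\sqrt{n}\Breve{B}$ for large $n$, where EJWC policies are jointly work conserving), a bound showing the jump-remainder term vanishes, and then \cref{L2.5} plus weak convergence and a triangle inequality to conclude $\uppi(\Lg^{u}f)=0$. The only cosmetic difference is in the jump term: the paper bounds $\mathscr{D}f$ by a second-order expression and evaluates the expected sum of squared jumps via the quadratic-variation martingale identity, whereas you use a per-jump $\order(n^{-3/2})$ bound times the expected jump count --- an equivalent computation, both ultimately controlled through \cref{L2.2} and \cref{L6.1A}.
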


\begin{proof}
It is clear that \cref{L6.1A} implies that
$\bigl\{\Phi^{Z^n}_{T}\colon n\in \NN, \ T>0\bigr\}$ is tight. 
For $f \in \Cc^{\infty}_c(\RR^I)$, by \cref{E-KW}, the definition of
$\Phi^{Z^n}_{T}$ in \cref{E-emp}, and \cref{L2.3}, we obtain
\begin{equation}\label{L6.1B}
\frac{\Exp\bigl[f(\Hat{X}^{n}(T))\bigr] -
\Exp\bigl[f(\Hat{X}^{n}(0))\bigr]}{T} \;=\;
\int_{\RR^I \times \Act} \Breve{\mathscr{A}}^{n} f(x,u)\, \Phi^{Z^n}_{T} (\D{x}, \D{u})
+ \frac{1}{T}\;\Exp\Biggl[ \sum_{s\le T}\mathscr{D} f(\Hat{X}^n,s)\Biggr]\,.
\end{equation}

Let
\begin{equation*}
\norm{f}_{\Cc^3} \df \sup_{x\in\RR^I}
\Bigl( \abs{f(x)} + \sum_{i\,\in\,\cI} \abs{\partial_i f(x)}
+ \sum_{i\,,i'\,\in\,\cI} \abs{\partial_{ii'} f(x)}
+ \sum_{i\,,i'\,,i''\,\in\,\cI} \abs{\partial_{ii'i''} f(x)}\Bigr)\,.
\end{equation*}
By Taylor's formula, using also the fact that the jump size is 
$\frac{1}{\sqrt{n}}$, we obtain
\begin{equation*}
\babs{\mathscr{D} f(\Hat{X}^n,s)}
\;\le\;\frac{\kappa \norm{f}_{\Cc^{3}}}{\sqrt{n}}\sum_{i,i'=1}^{I}
\babs{\Delta\Hat{X}^{n}_{i}(s)\Delta \Hat{X}^{n}_{i'}(s)}\,,
\end{equation*}
for some constant $\kappa$ that does not depend on $n\in\NN$.
On the other hand, since independent Poisson processes have no simultaneous
jumps w.p.1., we have
\begin{equation*}
\frac{1}{T}\;\Exp\biggl[\int_{0}^{T}\sum_{i,i'=1}^{I}
\babs{\Delta\Hat{X}^{n}_{i}(s)\Delta \Hat{X}^{n}_{i'}(s)}\,\D{s}\biggr] 
\;=\;
\frac{1}{T}\;\Exp\babss{\int_{0}^{T}\sum_{i\in\cI}
\biggl(\frac{\lambda^{n}_i}{n} + \frac{1}{n}\sum_{j \in \cJ(i)}
\mu^{n}_{ij} Z^{n}_{ij}(s)
+ \frac{1}{n} \gamma^{n}_i Q^{n}_i(s) \biggr)\,\D{s}}\,,
\end{equation*}
and the right hand side is uniformly bounded over $n\in\NN$
and $T>0$
by \cref{L6.1A}.

Therefore, taking limits in \cref{L6.1B}, we obtain
\begin{equation}\label{L6.1C}
\limsup_{n\,\to\,\infty,\;T\,\to\,\infty}\; 
\int_{\RR^I \times \Act} \Breve{\mathscr{A}}^{n} f(x,u)\, \Phi^{Z^n}_{T} (\D{x}, \D{u})
\;=\;0\,.
\end{equation}

Let $(n_{k},T_{k})$, with $n_k\to\infty$ and $T_k\to\infty$, 
be any sequence along which
$\Phi^{Z^n}_{T} (\D{x}, \D{u})$ converges to some $\uppi\in\cP(\RR^{I}\times\Act)$.
By \cref{L6.1C},  \cref{L2.5}, and a standard triangle inequality, we obtain
\begin{equation*}
\int_{\RR^I \times \Act} \Lg^{u} f(x)\,\uppi(\D{x}, \D{u})
\;=\;0 \,.
\end{equation*}
This implies that $\uppi \in \eom$.
\end{proof}

We introduce a canonical construction of scheduling policies which is used in the
proofs of the upper bounds for asymptotic optimality. 
Recall \cref{DEJWC,D-hatx}, and $\Breve{\sX}^{n}$
defined in \cref{E-BsX}.

\begin{definition}\label{D6.1}
Let
$\varpi\colon \{ x \in \RR^{I}_{+}\colon e\cdot x \in \mathbb{Z}\}
\to \mathbb{Z}^{I}_{+}$ be
a measurable map defined by
\begin{equation*}
\varpi(x) \;\df\; \Bigl(\lfloor x_1 \rfloor, \dotsc,\lfloor x_{I-1} \rfloor,\,
e\cdot x - \textstyle\sum_{i=1}^{I-1} \lfloor x_i \rfloor\Bigr)\,,\qquad
x \in \RR^I\,.
\end{equation*}
By abuse of notation, we denote by $\varpi$ the similarly defined map
$\varpi\colon \{ x \in \RR^{J}_{+}\colon e\cdot x \in \mathbb{Z}\}
\to \mathbb{Z}^{J}_{+}$.
For a precise  control $v \in \Ussm$, define the maps
$q^{n}[v]\colon\RR^{I}\to \mathbb{Z}^{I}_{+}$
and $y^{n}[v]\colon\RR^{I}\to \mathbb{Z}^{J}_{+}$ by
\begin{equation*}
q^{n}[v](\Hat{x}) \;\df\;
\varpi \bigl((e\cdot (\sqrt{n}\Hat{x}+ n x^*) )^{+} v^c(\Hat{x})\bigr) \,,
\qquad
y^{n}[v](\Hat{x}) \;\df\;
\varpi\bigl((e \cdot   (\sqrt{n}\Hat{x}+ n x^*) )^{-} v^s(\Hat{x}) \bigr)\,,
\end{equation*}
where $\Hat{x} \in \sS^n$. 
Recall the definition of the linear map $\Psi$ in \cref{E-Psi}.
Define the Markov scheduling policy $z^n[v]$ on $\Breve{\sS}^{n}$ by 
\begin{equation*}
z^{n}[v](\Hat{x}) \;\df\;
\Psi\bigl(x-q^{n}[v](\Hat{x}), N^n - y^{n}[v](\Hat{x})\bigr)\,. 
\end{equation*}
\end{definition}

\begin{corollary}\label{L-Xn}
For any precise control $v \in \Ussm$, 
we have   
\begin{equation*}
e\cdot q^n[v](\Hat{x}^n(x)) \wedge e \cdot y^n[v](\Hat{x}^n(x))=0\,,
\quad \text{and} \quad z^n[v](\Hat{x}^n(x))\in \cZn(x)\,,
\end{equation*}
for all $x\in\Breve{\sX}^{n}$, i.e., the JWC condition is
satisfied in $\Breve{\sX}^{n}$.
\end{corollary}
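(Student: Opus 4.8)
The plan is to reduce both assertions to the consequence of \cref{L-JWC} recorded in the remark immediately following it, applied to the integer vectors $q\df q^{n}[v](\Hat{x}^{n}(x))$ and $y\df y^{n}[v](\Hat{x}^{n}(x))$. Fix $x\in\Breve{\sX}^{n}$ and write $\Hat{x}\df\Hat{x}^{n}(x)$, so that $\sqrt{n}\Hat{x}+n x^{*}=x$ by \cref{D-hatx}; writing $a\df e\cdot(x-N^{n})$ for the total customer–server imbalance, the scalar factors inside $\varpi$ in \cref{D6.1} reduce to $a^{+}$ and $a^{-}$ respectively. First I would record two elementary properties of the rounding map $\varpi$: for every $w$ in its domain one has $\varpi(w)\in\ZZ^{I}_{+}$ (resp.\ $\ZZ^{J}_{+}$) and $e\cdot\varpi(w)=e\cdot w$. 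Integrality is built into the definition, the last coordinate is nonnegative because $\sum_{i<I}\lfloor w_{i}\rfloor\le\sum_{i<I}w_{i}\le e\cdot w$ when $w\ge0$, and sum preservation is a one-line telescoping. It is precisely this exact preservation of the coordinate sum under rounding on which the argument hinges.

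Next I would compute the two totals. Since $v(\Hat{x})=(v^{c}(\Hat{x}),v^{s}(\Hat{x}))\in\Act$, both $v^{c}(\Hat{x})$ and $v^{s}(\Hat{x})$ are probability vectors, so $e\cdot\bigl(c\,v^{c}(\Hat{x})\bigr)=c$ and $e\cdot\bigl(c\,v^{s}(\Hat{x})\bigr)=c$ for any $c\ge0$. Combining this with the sum preservation of $\varpi$ yields
\begin{equation*}
e\cdot q \;=\; a^{+}\,,\qquad e\cdot y \;=\; a^{-}\,,\qquad a\df e\cdot(x-N^{n})\,.
\end{equation*}
The first assertion of the corollary is then immediate, since $a^{+}\wedge a^{-}=0$ for every real $a$. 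Moreover these identities say exactly that $(q,y)\in\varTheta^{n}(x)$: indeed $q\in\ZZ^{I}_{+}$ and $y\in\ZZ^{J}_{+}$ by the properties of $\varpi$, while $e\cdot q=[e\cdot(x-N^{n})]^{+}$ and $e\cdot y=[e\cdot(x-N^{n})]^{-}$ are the defining constraints of $\varTheta^{n}(x)$. For the balance condition required by the remark, one checks using $a=a^{+}-a^{-}$ that
\begin{equation*}
e\cdot(x-q)-e\cdot(N^{n}-y)\;=\;(e\cdot x-e\cdot N^{n})-(a^{+}-a^{-})\;=\;a-a\;=\;0\,,
\end{equation*}
and that this common value is nonnegative, equaling $e\cdot x\ge0$ when $a\le0$ and $e\cdot N^{n}\ge0$ when $a>0$.

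Finally, having verified that $q\in\ZZ^{I}_{+}$ and $y\in\ZZ^{J}_{+}$ satisfy $e\cdot q\wedge e\cdot y=0$ and $e\cdot(x-q)=e\cdot(N^{n}-y)\ge0$ for $x\in\Breve{\sX}^{n}$, the remark following \cref{L-JWC} gives $z^{n}[v](\Hat{x})=\Psi\bigl(x-q,\,N^{n}-y\bigr)\in\cZn(x)$, which is the second assertion. The only genuine point of care, and hence the main (modest) obstacle, is the \emph{exactness} of the totals in the displayed identities: membership in $\varTheta^{n}(x)$ demands equality of the coordinate sums, not mere closeness, so one must use that $\varpi$ preserves $e\cdot(\cdot)$ exactly and that $v^{c},v^{s}$ sum exactly to one; once this is in place, the conclusion is a direct invocation of \cref{L-JWC}.
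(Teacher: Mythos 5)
Your proof is correct and takes essentially the same route as the paper, whose entire proof reads ``This follows from \cref{L-JWC} and the definition of the maps $q^n[v]$, $y^n[v]$ and $z^n[v]$''; what you wrote is exactly that argument with the omitted verifications supplied (integrality and sum preservation of $\varpi$, the identities $e\cdot q = a^{+}$ and $e\cdot y = a^{-}$, membership in $\varTheta^n(x)$, and the invocation of the remark following \cref{L-JWC}).

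One caveat, which concerns \cref{D6.1} rather than your reasoning: you take the scalar factors inside $\varpi$ to be $a^{\pm}$ with $a = e\cdot(x-N^{n})$, whereas \cref{D6.1} as printed has $\bigl(e\cdot(\sqrt{n}\Hat{x}+n x^{*})\bigr)^{\pm} = (e\cdot x)^{\pm}$. Your reading is the only one under which the corollary can hold: with the literal reading, $y^{n}[v]\equiv 0$ on $\sS^n$ (since $e\cdot x\ge 0$ for $x\in\ZZ^I_+$), so that $e\cdot(x-q^{n}[v]) = 0$ while $e\cdot(N^{n} - y^{n}[v]) = e\cdot N^{n}>0$, the argument of $\Psi$ falls outside $D_{\Psi}$, and $z^{n}[v]$ is not even well defined. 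So you have in effect corrected a typo in the definition (the scalar should be $e\cdot(\sqrt{n}\Hat{x}+n x^{*}-N^{n})$, which is also what makes the $\varpi$-domain requirement $e\cdot w\in\ZZ$ automatic), and with that correction your verification is complete and coincides with the paper's intended argument.
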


\begin{proof}
This follows from \cref{L-JWC} and the definition of the
maps $q^n[v]$,  $y^n[v]$ and  $z^n[v]$. 
\end{proof}

The next lemma is used  in the proof of upper bounds in \cref{T3.1,T3.2}.
It shows that for any given continuous precise  stationary stable Markov control,
if we construct a sequence of EJWC scheduling policies as in \cref{D6.1},
then the corresponding mean empirical measures of the diffusion-scaled processes
converge and the limit agrees with the ergodic occupation measure of the
limiting diffusion corresponding to that control. 

\begin{lemma}\label{L6.2}
Let $v\in\Ussm$ be a  continuous precise control,
and $\bigl\{Z^{n}\,\colon\, n\in\NN\bigr\}$ be any sequence of
admissible scheduling policies such that each $Z^n$ agrees with
the Markov scheduling policy $z^n[v]$ given in \cref{D6.1}
on $\sqrt{n}\Breve{B}$, i.e.,
$Z^n(t) = z^n[v]\bigl(\Hat{X}^n(t)\bigr)$ whenever $\Hat{X}^n(t)\in\sqrt{n}\Breve{B}$.
For $\Hat{x}\in \sqrt{n}\Breve{B}\cap\sS^n$, we define
\begin{align*}
u^{c,n}[v] (\Hat{x}) &\;\df\; \begin{cases}
\frac{q^{n}[v](\Hat{x})}{e\cdot q^{n}[v](\Hat{x})}
& \text{if~} e\cdot q^{n}[v](\Hat{x})>0\,,\\[5pt]
v^c(\Hat{x}) &\text{otherwise,}\end{cases}
\intertext{and} 
u^{s,n}[v](\Hat{x}) &\;\df\; \begin{cases}
\frac{y^{n}[v](\Hat{x})}{e\cdot y^{n}[v](\Hat{x})}
& \text{if~} e\cdot y^{n}[v](\Hat{x})>0\,,\\[5pt]
v^s(\Hat{x}) &\text{otherwise.}\end{cases}
\end{align*}
For the process $X^n$ under the scheduling policy $Z^n$,
define the mean empirical measures 
\begin{equation} \label{EL6.2A}
\Tilde\Phi^{Z^n}_{T}(A\times B)\;\df\;
\frac{1}{T}\,\Exp^{Z^n}\biggl[\int_{0}^{T}\Ind_{A\times B}
\bigl(\Hat{X}^{n}(t),u^{n}[v]\bigl(\Hat{X}^{n}(t)\bigr)\bigr)\,\D{t}\biggr]
\end{equation}
for Borel sets $A\subset\sqrt{n}\Breve{B}$ and $B\subset \Act$.
Suppose that \cref{L6.1A} holds under this sequence  $\{Z^n\}$.
Then the ergodic occupation measure $\uppi_{v}$ of the controlled
diffusion in \cref{E-diff} corresponding to $v$
is the unique limit point in $\cP(\Rd\times\Act)$
of $\Tilde\Phi^{Z^n}_{T}$ as $n$ and $T$ tend to $\infty$.
\end{lemma}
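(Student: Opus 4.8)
The plan is to follow the scheme of the proof of \cref{L6.1}, but to pin down the limit precisely as $\uppi_v$ rather than merely placing it in $\eom$. Since $v\in\Ussm$ is stable, the $v$-controlled diffusion \cref{E-diff} is positive recurrent, and as its diffusion matrix is nondegenerate it admits a unique invariant probability measure $\mu_v$; thus $\uppi_v(\D x,\D u)=\mu_v(\D x)\,\delta_{v(x)}(\D u)$ is the unique element of $\eom$ that is supported on the graph of $v$. I would establish convergence in three steps: (i) tightness of $\{\Tilde\Phi^{Z^n}_T\}$ and extraction of subsequential limits; (ii) every limit point lies in $\eom$; and (iii) every limit point is supported on the graph of $v$. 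Steps (ii) and (iii) force any limit to equal $\uppi_v$, and since the limit is thereby unique, the whole family converges.

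First I would record the structural facts underlying the construction in \cref{D6.1}. Writing $x=\sqrt n\,\Hat x+nx^*$ and using the defining relations \cref{E-Psi}, one checks that the queue and idleness actually produced by $z^n[v]$ coincide with $q^n[v](\Hat x)$ and $y^n[v](\Hat x)$; hence on $\sqrt n\Breve B$ the parameterization $u^n[v]$ in \cref{EL6.2A} is exactly the canonical control parameterization of \cref{D-upara} for the action $z^n[v]$, except on the sets where the corresponding total queue or total idleness vanishes, where the discrepant coordinate is multiplied by that vanishing total and so does not enter the drift. By \cref{L-Xn} the action is JWC on $\Breve{\sX}^n$, so \cref{L2.3} applies and $\Breve{\mathscr A}^n f(\Hat x,u^n[v](\Hat x))=\mathscr A^n f(\Hat x,z^n[v](\Hat x))$ there. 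Moreover, since $\Breve B$ contains a ball about the origin, $\sqrt n\Breve B$ contains $B_{\rho\sqrt n}$ for some $\rho>0$, so by \cref{L6.1A} and Markov's inequality the mean empirical mass carried by $\{\Hat X^n\notin\sqrt n\Breve B\}$ is $\order(n^{-1/2})$ and is negligible in the limit. With these reductions, tightness and step (ii) follow exactly as in \cref{L6.1}: the Dynkin/Kunita--Watanabe identity \cref{E-KW} together with the generator convergence \cref{L2.5} yields $\int\Lg^u f(x)\,\uppi(\D x,\D u)=0$ for every $f\in\Cc^\infty_c(\RR^I)$, i.e.\ $\uppi\in\eom$, for any subsequential limit $\uppi$.

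The heart of the matter is step (iii). Here I would use that $\varpi$ preserves the coordinate sum and perturbs each coordinate by at most $\order(1)$, so that $q^n[v](\Hat x)$ equals a nonnegative scalar multiple of $v^c(\Hat x)$ up to an $\order(1)$ rounding error, while its total $e\cdot q^n[v](\Hat x)$ is of order $\sqrt n$ away from the critical manifold $\{e\cdot\Hat x\approx0\}$. Dividing by the total, the scalar prefactor cancels and
\begin{equation*}
\babs{u^{c,n}[v](\Hat x)-v^c(\Hat x)}\;\le\;\frac{\order(1)}{e\cdot q^n[v](\Hat x)}\;=\;\order\!\left(n^{-1/2}\right)
\end{equation*}
uniformly on compact subsets of $\{e\cdot\Hat x>0\}$, with the analogous bound for $u^{s,n}[v]$ on $\{e\cdot\Hat x<0\}$; on the complementary sets the relevant total is zero and $u^n[v]$ is set equal to $v$ by definition. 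Consequently $u^n[v](\Hat x)\to v(\Hat x)$ off a thin slab about the critical manifold whose mean empirical mass vanishes as $n\to\infty$ by \cref{L6.1A}. Since $v$ is continuous, for every bounded continuous $g$ one then has $\int\bigl[g(x,u)-g(x,v(x))\bigr]\,\Tilde\Phi^{Z^n}_T(\D x,\D u)\to0$, and therefore every limit point is of the form $\mu(\D x)\,\delta_{v(x)}(\D u)$.

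Combining steps (ii) and (iii), any limit $\uppi=\mu(\D x)\,\delta_{v(x)}(\D u)$ in $\eom$ satisfies $\int\Lg^{v(x)}f(x)\,\mu(\D x)=0$ for all $f\in\Cc^\infty_c(\RR^I)$, so $\mu$ is invariant for the $v$-controlled diffusion; by uniqueness of the invariant measure, $\mu=\mu_v$ and hence $\uppi=\uppi_v$. As every subsequential limit equals $\uppi_v$, the family $\Tilde\Phi^{Z^n}_T$ converges to $\uppi_v$ as $n,T\to\infty$. I expect the main obstacle to be step (iii): making the convergence $u^n[v]\to v$ quantitative and uniform on compacta while showing that the degenerate slab near $\{e\cdot\Hat x=0\}$---where the normalizer $e\cdot q^n[v]$ is small and the $\order(1)$ rounding error is not negligible relative to it---carries asymptotically no mean empirical mass, a fact I would extract from the first-order moment bound \cref{L6.1A}.
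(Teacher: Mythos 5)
Your overall architecture---tightness from \cref{L6.1A}; every subsequential limit lies in $\eom$ via \cref{E-KW}, \cref{L2.3} and \cref{L2.5}; identification of the limit as supported on the graph of $v$; and finally uniqueness of the invariant probability measure of the nondegenerate $v$-controlled diffusion---is exactly the scheme of the proof the paper points to (the paper's proof of \cref{L6.2} consists of a citation to Lemma~7.2 of \cite{AP16}, which runs along these lines). Your preparatory observations are also correct: by \cref{E-Psi} the queue and idleness produced by $z^n[v]$ are indeed $q^n[v]$ and $y^n[v]$, so on $\sqrt{n}\Breve{B}$ the map $u^n[v]$ is the canonical parameterization of \cref{D-upara} up to the sets where the corresponding totals vanish, where the discrepancy does not enter $\Breve{\mathscr{A}}^n$; and the empirical mass outside $\sqrt{n}\Breve{B}$ is $\order(n^{-\nicefrac{1}{2}})$ by Markov's inequality.

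There is, however, a genuine gap at the step you yourself single out as the crux. You claim that the slab $\bigl\{0< e\cdot q^n[v](\Hat{x})\le K\bigr\}$---a set of width $\order(n^{-\nicefrac{1}{2}})$ around the critical hyperplane, on which the rounding error is not negligible relative to the normalizer---carries asymptotically vanishing mean empirical mass ``by \cref{L6.1A}''. A first-order moment bound gives tightness, i.e., it controls mass far from the origin, but it cannot rule out concentration on a thin slab, or even on the hyperplane itself: a sequence of processes spending a fixed fraction of time on $\{e\cdot\Hat{x}=0\}$ inside a bounded set satisfies \cref{L6.1A} perfectly well, so step (iii) as justified would fail. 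The repair uses an ingredient you already have, but in the opposite order: first pass to a convergent subsequence and use step (ii) to conclude that the limit $\uppi$ lies in $\eom$; by disintegration its marginal on $\RR^I$ is an invariant probability measure of the diffusion \cref{E-diff} under some (possibly relaxed) Markov control, and since $\Sigma$ is constant and nondegenerate this marginal is absolutely continuous with a positive density, hence assigns zero mass to the hyperplane. The portmanteau theorem applied to closed $\epsilon$-neighborhoods of the hyperplane, which contain the slabs for all large $n$, then shows that the slab mass vanishes along the convergent subsequence, after which your argument that the limit is supported on the graph of $v$---and therefore, by uniqueness of the invariant measure, equals $\uppi_{v}$---goes through. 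In short, the vanishing slab mass is a consequence of step (ii) together with nondegeneracy, not of \cref{L6.1A}.
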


\begin{proof}
The proof follows exactly as that of Lemma~7.2 in \cite{AP16}. 
\end{proof}

\subsection{A stability preserving property in the JWC region}

If there exists a stationary Markov control under which the controlled
diffusion is exponentially ergodic, then it can be shown that under the
corresponding scheduling policy as constructed in \cref{D6.1},
the diffusion-scaled state process satisfies a Foster--Lyapunov condition
of the exponential ergodicity type in the JWC region.
We refer to this as the \emph{stability preserving property in the JWC region}.
This property is important to prove the upper bounds for the asymptotic optimality,
and is also the reason why the spatial truncation technique works. 
We present this in the following Proposition. 

\begin{proposition}\label{P6.3}
Let $\widetilde{\Lyap}_{\epsilon,\beta}$ be as in \cref{Lyapk}.
Suppose $v\in\Ussm$ is such that for some
positive constants $c_0$, $c_1$, and $\epsilon>0$, 
and a positive vector $\beta\in\RR^I$, it holds that
\begin{equation} \label{EP6.3A}
\Lg^{v} \widetilde{\Lyap}_{\epsilon,\beta}(x) \;\le\;
c_0  - c_1\, \widetilde{\Lyap}_{\epsilon,\beta}(x)\qquad
\forall x\in\RR^I\,.
\end{equation} 
Let $\Hat{X}^{n}$ denote the diffusion-scaled state process under the
scheduling policy $z^{n}[v]$ in \cref{D6.1}, and
$\widehat\cL_n$ denote its generator.
Then, there exists $n_{0}\in\NN$ such that
\begin{equation*}
\widehat\cL_n \widetilde{\Lyap}_{\epsilon,\beta}(\Hat{x}) \;\le\;
\Hat{c}_0  - \Hat{c}_1 \,\widetilde{\Lyap}_{\epsilon,\beta}(\Hat{x})\qquad
\forall \Hat{x}\in\Breve\sS^n\cap\sqrt{n}\Breve{B}\,,
\end{equation*} 
for some positive constants $\Hat{c}_0$ and $\Hat{c}_1$, and for all $n\ge n_0$.
\end{proposition}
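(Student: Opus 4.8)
The plan is to transfer the exponential Foster--Lyapunov inequality \cref{EP6.3A} from the limiting generator $\Lg^v$ to the discrete generator $\widehat\cL_n$, working only on the JWC region $\Breve\sS^n\cap\sqrt n\Breve B$ where, by \cref{L-Xn}, the canonical policy $z^n[v]$ of \cref{D6.1} is jointly work conserving. First I would expand the exact discrete generator by Taylor's theorem exactly as in \cref{E-KW,E-scrD}, writing
\begin{equation*}
\widehat\cL_n\widetilde\Lyap_{\epsilon,\beta}(\Hat x)
\;=\;\mathscr{A}^n\widetilde\Lyap_{\epsilon,\beta}\bigl(\Hat x,\Hat z^n[v](\Hat x)\bigr)
+ R_n(\Hat x)\,,
\end{equation*}
where $\Hat z^n[v]$ is the diffusion-scaled action under $z^n[v]$ and $R_n(\Hat x)$ is the instantaneous rate of the third-order jump remainder $\mathscr{D}\widetilde\Lyap_{\epsilon,\beta}$. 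The elementary fact used throughout is that $\widetilde\Lyap_{\epsilon,\beta}$ has all partial derivatives bounded by a fixed multiple of itself (its logarithm has bounded derivatives), which is exactly the estimate behind \cref{PP5.1Bd}. Since the jumps of $\Hat X^n$ are single-coordinate increments of size $\tfrac{1}{\sqrt n}$ occurring at total rate $\order(n)$, the third-order remainder gives $\abs{R_n(\Hat x)}\le C n^{-1/2}\widetilde\Lyap_{\epsilon,\beta}(\Hat x)$.

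Next, on the JWC region the policy $z^n[v]$ is JWC, so by the pointwise (derivative-local) content of \cref{L2.3} one has $\mathscr{A}^n\widetilde\Lyap_{\epsilon,\beta}(\Hat x,\Hat z^n[v]) = \Breve{\mathscr{A}}^n\widetilde\Lyap_{\epsilon,\beta}(\Hat x,u^n[v](\Hat x))$, where $u^n[v]$ is the induced control parameterization of \cref{D-upara}; this identity extends from $\Cc^2_c(\sqrt n\Breve B)$ to $\widetilde\Lyap_{\epsilon,\beta}$ because both operators see only $\partial_i\widetilde\Lyap_{\epsilon,\beta}(\Hat x)$ and $\partial_{ii}\widetilde\Lyap_{\epsilon,\beta}(\Hat x)$. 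I would then compare this with $\Lg^v\widetilde\Lyap_{\epsilon,\beta}$, splitting the difference into a coefficient discrepancy $\Breve{\mathscr{A}}^n-\Lg^{u^n[v]}$ and a control discrepancy $\Lg^{u^n[v]}-\Lg^{v}$. From the explicit forms of $\Breve{\mathscr{A}}^n_{i,1},\Breve{\mathscr{A}}^n_{i,2}$ and \cref{HWpara} (quantifying \cref{L2.5}), the coefficient discrepancy is bounded by $C\delta_n(1+\abs{\Hat x})\widetilde\Lyap_{\epsilon,\beta}(\Hat x)$ with $\delta_n\to 0$, since $\widehat\Psi[u]$ is linear in $\Hat x$ and $\mu^n_{ij}-\mu_{ij}=\order(n^{-1/2})$. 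The control discrepancy is governed by $(e\cdot\Hat x)^{+}\abs{u^{c,n}[v]-v^c}$ and $(e\cdot\Hat x)^{-}\abs{u^{s,n}[v]-v^s}$; as the rounding map $\varpi$ perturbs the integer queue/idleness vectors by $\order(1)$, this deviation is $\order(n^{-1/2})\widetilde\Lyap_{\epsilon,\beta}$ wherever the total queue (resp.\ idleness) is macroscopic, and is only $\order(1)\widetilde\Lyap_{\epsilon,\beta}$ in the thin transition slab where $(e\cdot\Hat x)^{\pm}$ is bounded.

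The main obstacle is that neither error is uniformly small over the \emph{growing} region $\sqrt n\Breve B$: the coefficient error is $\order(1)$ at the boundary $\abs{\Hat x}\sim\sqrt n$, and the control error is $\order(1)$ in the slab. The resolution is a decomposition by the cone $\cK_{\delta}$ of \cref{Lyapk}. Repeating the computation underlying \cref{E-structural} for the drift \cref{E-drift2} but with the exponential $\widetilde\Lyap_{\epsilon,\beta}$ (whose gradient, unlike that of a polynomial, is bounded by a multiple of itself) yields the \emph{sharper} decay $\le c_0'-c_1'(1+\abs{\Hat x})\widetilde\Lyap_{\epsilon,\beta}$ outside the cone, uniformly over $u\in\Act$. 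Outside $\cK_\delta$ the $\order(1)$ control error occurs only where $(e\cdot\Hat x)^{\pm}$ is bounded, so this linear-in-$\abs{\Hat x}$ decay dominates both the control error and the coefficient error for $\abs{\Hat x}$ large, the bounded part being absorbed into $\Hat c_0$. Inside the cone the total queue/idleness is of order $\abs{\Hat x}$, so the control error is negligible $\order(n^{-1/2})$ and the decay comes directly from \cref{EP6.3A}; the residual coefficient error $\order(\abs{\Hat x}/\sqrt n)$ is absorbed either by fixing $\Breve B$ of sufficiently small radius (permitted by \cref{DEJWC}) or, where $\gamma_i>0$ supplies abandonment decay in the cone, by the same linear mechanism. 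Choosing $n_0$ so that $\delta_n$ and $n^{-1/2}$ are small for $n\ge n_0$ then gives $\widehat\cL_n\widetilde\Lyap_{\epsilon,\beta}(\Hat x)\le\Hat c_0-\Hat c_1\widetilde\Lyap_{\epsilon,\beta}(\Hat x)$ on $\Breve\sS^n\cap\sqrt n\Breve B$, as claimed.
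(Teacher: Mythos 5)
Your proposal is correct, and its skeleton is the same as the paper's proof: a Taylor expansion of the discrete generator (your remainder $R_n$ corresponds to the jump-correction terms \cref{E-scrD}, and your per-step bounds play the role of the paper's estimates culminating in \cref{PP6.3B,PP6.3C}), the identification with $\Breve{\mathscr{A}}^n$ on the JWC region through \cref{L2.3} (valid, as you note, because both operators are local differential operators), the comparison with $\Lg^{v}$ via \cref{L2.5} and \cref{HWpara}, and finally the hypothesis \cref{EP6.3A}, which the paper invokes in the drift-only form \cref{PP6.3D}. Where you genuinely diverge is the final step. The paper concludes from ``$G^{(1)}_{n,i}(\Hat x)\to b_i(\Hat x,v(\Hat x))$ uniformly on compact sets,'' which, taken literally, does not cover the growing domain $\Breve\sS^n\cap\sqrt n\Breve B$: exactly as you observe, the coefficient errors there are of order $(1+\abs{\Hat x})/\sqrt{n}$ (from the $\order(n^{-\nicefrac{1}{2}})$ rates in \cref{HWpara}) plus $\abs{\gamma^n_i-\gamma_i}\,\abs{\Hat x}$, hence bounded only by a constant proportional to the radius of $\Breve B$, not vanishing. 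Your cone decomposition supplies the missing uniformity: outside $\cK_\delta$, the drift \cref{E-drift2} tested against the exponential function yields decay $-c\,(1+\abs{\Hat x})\,\widetilde{\Lyap}_{\epsilon,\beta}$ uniformly over $\Act$ (the same mechanism that underlies \cref{T2.1}), which swallows every error growing at most linearly in $\abs{\Hat x}$ once $n$ is large; inside $\cK_\delta$ the rounding error is genuinely $\order(n^{-\nicefrac{1}{2}})$ and \cref{EP6.3A} does the work. So your route is the paper's route plus the uniformity argument the paper elides; that is what your extra machinery buys. The one soft spot is the one you flag yourself: inside the cone the coefficient error is only bounded by $CR$ ($R$ the radius of $\Breve B$) and must be beaten by the fixed constant $c_1$ of \cref{EP6.3A}; note that shrinking $\epsilon$ does not help, since the error and the decay both scale linearly in $\epsilon$, so you indeed need either $R$ small (legitimate, since \cref{DEJWC} fixes $\Breve B$ ``only for convenience'') or genuinely linear decay inside the cone, which the controls to which the proposition is actually applied (those from \cref{L7.1,C7.1}, constructed as in \cref{T2.1}) do provide. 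This caveat is not a defect relative to the paper: the same constraint is hidden inside the paper's two-line conclusion.
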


\begin{proof}
Recall the notation $\Hat{x}= \Hat{x}^n(x)$ in \cref{D-hatx}. Under the Markov
scheduling policy $z^n[v]$ in \cref{D6.1},  for each given $x\in \RR^I$, 
we define the  associated diffusion-scaled quantities 
\begin{equation*}
\Hat{q}^n \;=\; \Hat{q}^n[v]\;\df\; \frac{q^n[v]}{\sqrt{n}}\,, \quad
\Hat{y}^n \;=\; \Hat{y}^n[v]\;\df\; \frac{y^n[v]}{\sqrt{n}}\,, \quad
\Hat{z}^n \;=\; \Hat{z}^n[v]\;\df\; \frac{z^n[v] -  nz^*}{\sqrt{n}}\,.
\end{equation*}

Recall that $\widehat{\cL}_n=\widehat{\cL}_n^{z^n[v]}$ denotes the generator
of $\Hat{X}^n$ under the
scheduling policy $z^n[v]$ (see \cref{E-cL,E-gen2}).
Let $\widehat\Lyap_{\epsilon,\beta}(x)\;\df\;\widetilde\Lyap_{\epsilon,\beta}(\Hat{x})$,
with $\widetilde\Lyap_{\epsilon,\beta}$ as in \cref{Lyapk}.
Using the identity in \cref{PP5.1Ba}, we obtain 
\begin{equation}\label{PP6.3A}
\Babs{\widehat\Lyap_{\epsilon,\beta}(x\pm e_i)
- \widehat\Lyap_{\epsilon,\beta}(x) \mp
\epsilon\tfrac{1}{\sqrt{n}}\,\beta_i \Hat{x}_i \phi_\beta(\Hat{x})\,
\widehat\Lyap_{\epsilon,\beta}(x)}
\;\le\; \tfrac{1}{n}\epsilon^2\,\Hat\kappa_1\,\widehat\Lyap_{\epsilon,\beta}(x)
\end{equation}
for some constant $\Hat\kappa_1>0$, and all $\epsilon\in(0,1)$.
Thus by \cref{E-cL,E-gen2,PP6.3A}, we obtain 
\begin{equation}\label{PP6.3B} 
\cL_n^z\, \widehat\Lyap_{\epsilon,\beta}(x) \;\le\;
\epsilon\,\widehat\Lyap_{\epsilon,\beta}(x)\,
\sum_{i\in\cI} \Bigl(\beta_i\,\Hat{x}_i\,\phi_\beta(\Hat{x})\,G_{n,i}^{(1)}(\Hat{x})
+\epsilon\,\Hat\kappa_1\,G_{n,i}^{(2)}(\Hat{x})\Bigr)\,,
\end{equation}
in direct analogy to \cref{PP5.1Be}, where 
\begin{align*} 
G_{n,i}^{(1)}(\Hat{x}) &\;\df\;
\ell_i^{n} - \sum_{j \in \cJ(i)} \mu_{ij}^{n} \Hat{z}^n_{ij}
- \gamma^{n}_i \Hat{q}^n_i \,,\\[5pt]
G_{n,i}^{(2)}(\Hat{x}) &\;\df\;  \frac{\lambda^{n}_{i}}{n}
+\sum_{j\in\cJ(i)} \mu_{ij}^{n} z_{ij}^*
+\frac{1}{\sqrt{n}} \sum_{j\in\cJ(i)} \mu_{ij}^{n} \Hat{z}_{ij}^n
+ \frac{\gamma^{n}_i}{\sqrt{n}}  \Hat{q}^n_i\,.
\end{align*}
The dependence of $G_{n,i}^{(1)}$ and $G_{n,i}^{(2)}$ on $\Hat{x}$
is implicit through $z^n[v]$.
By \cref{D6.1}, it always holds that $z^n_{ij} \le x_i$
and $q^n_i\le x_i$ for all $(i,j)\in\cE$.
Since $\frac{x_i}{\sqrt{n}} =\Hat{x}_{i}+\sqrt{n}x^*_i$,
and  $\Hat{z}_{ij} = \frac{x_i}{\sqrt{n}}+ z_{ij}^*$, we obtain 
\begin{align}\label{PP6.3C}
n\,G_{n,i}^{(2)}(x) 
& \;\le\;  \lambda_i^{n}
+  \sqrt{n} \sum_{j \in \cJ(i)} \mu_{ij}^{n} z^*_{ij}
+  \sqrt{n}\sum_{j \in \cJ(i)} \mu_{ij}^{n} \bigl(\Hat{x}_{i}
+ \sqrt{n}(x^*_i -  z^*_{ij}) \bigr)  + \sqrt{n} \gamma^{n}_i
(\Hat{x}_i + \sqrt{n} x^*_i) \, \nonumber \\
& \;=\;  \lambda_i^{n}
+  n \biggl( \sum_{j \in \cJ(i)} \mu_{ij}^{n} + \gamma^{n}_i \biggr) x_i^* 
+  \sqrt{n} \biggl( \sum_{j \in \cJ(i)} \mu_{ij}^{n} + \gamma^{n}_i \biggr)
\Hat{x}_{i}  \, \nonumber\\
&\;\le\;  \Hat{\kappa}_2 \big(n+  \sqrt{n} \abs{\Hat{x}_{i}} \big) \,,
\end{align}
for some constant $\Hat\kappa_2>0$,
where the last inequality follows from the assumption on the parameters
in \cref{HWpara}.

Since the control $v$ satisfies \cref{EP6.3A}, we must have,
for some positive constants $c_0'$ and $c_1'$ that
\begin{equation}\label{PP6.3D}
\sum_{i\in\cI} \epsilon\beta_i b_i\bigl(x,v(x)\bigr) x_i \phi_\beta(x)
\widetilde\Lyap_{\epsilon,\beta}(x) \;\le\;
c_0' - c_1'\widetilde\Lyap_{\epsilon,\beta}(x)\qquad\forall x\in\RR^I\,.
\end{equation}
By \cref{L2.3,L2.5}, we have
$G_{n,i}^{(1)}(\Hat{x})\to b_i\bigl(\Hat{x},v(\Hat{x})\bigr)$,
uniformly over compact sets of $\RR^I$ as $n\to\infty$.
Therefore, the result follows by combining \cref{PP6.3B,PP6.3C,PP6.3D}.
\end{proof}

\medskip
\section{Proofs of Asymptotic Optimality}\label{S-LUB}

We need the following lemma, which is used in the proof of the upper bound.

\begin{lemma}\label{L7.1}
For any $\varepsilon>0$, there exists a
continuous precise
control $v_\epsilon\in\Ussm$ with the following properties:
\begin{enumerate}
\item[(a)]
For some positive vector $\beta\in\RR^{I}$ which does not depend
on $\varepsilon$, and any $\kappa>1$, we have
\begin{equation}\label{EL7.1A}
\Lg^{v_\epsilon} \Lyap_{\kappa,\beta}(x) \;\le\;
c_0 - c_1 \,\Lyap_{\kappa,\beta}(x)\qquad \forall x\in \RR^I
\end{equation}
for some constants $c_0$ and $c_1$ depending only on $\kappa$.
\item[(b)]
With
$\uppi_{v_{\varepsilon}}$ denoting the ergodic
occupation measure corresponding to $v_{\varepsilon}$, it holds that 
\begin{equation*}
\uppi_{v_{\varepsilon}} (r)
\;=\; \int_{\RR^{I} \times\Act} r(x,u)\, \uppi_{v_{\varepsilon}}(\D{x},\D{u}) \;<\;
\varrho^* + \varepsilon\,,
\end{equation*}
where $\varrho^*$ is the optimal value of problem {\upshape(P1$^\prime$)}.
\end{enumerate}
\end{lemma}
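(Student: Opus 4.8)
The plan is to obtain $v_\epsilon$ by the \emph{spatial truncation} technique of \cite[Theorem~4.2]{ABP14}, splicing a continuous, precise, near-optimal control on a large ball together with a continuous version of the globally stabilizing control $\Bar{v}$ of \cref{E-stablev} outside it. Since (P1$'$) is well posed (by \cite{ABP14,AP15}), there is an optimal ergodic occupation measure $\Bar\uppi\in\eom$ with $\Bar\uppi(r)=\varrho^*$, associated with a precise stationary Markov control $v^*$. As $v^*$ is in general only measurable, I would first regularize it to a continuous precise control $\Hat{v}$ which remains stable and nearly optimal, $\uppi_{\Hat{v}}(r)<\varrho^*+\tfrac{\epsilon}{2}$; this is the standard mollification/approximation step for ergodic control. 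Fixing a smooth cutoff $\chi_R\in\Cc^\infty(\RR^I)$ with $\chi_R\equiv1$ on $B_R$, $\chi_R\equiv0$ on $B_{2R}^c$, and $0\le\chi_R\le1$, I would then set
$$v_R(x)\;\df\;\chi_R(x)\,\Hat{v}(x)+\bigl(1-\chi_R(x)\bigr)\,\Bar{v}(x)\,.$$
Because the drift $b(x,\cdot)$ in \cref{E-drift2} is affine in $u$ and $\Act$ is convex, $v_R$ is a continuous precise control and $b(x,v_R(x))=\chi_R(x)\,b(x,\Hat{v}(x))+(1-\chi_R(x))\,b(x,\Bar{v}(x))$.

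For part (a) I would take $\beta$ to be the common weight rendering $-B_1$ dissipative, i.e.\ $\nabla\Lyap_{\kappa,\beta}\cdot B_1 x\ge c\,\Lyap_{\kappa,\beta}$ (as in the choice of the $\beta_i$ in the proof of \cref{P6.1}); this $\beta$ is fixed, independent of $\kappa$ and of $\epsilon$. I would then verify the Foster--Lyapunov bound for $v_R$ on three regions. On $\cK_\delta^c$ the structural estimate \cref{E-structural} gives $b(x,u)\cdot\nabla\Lyap_{\kappa,\beta}\le c_0-c_1\Lyap_{\kappa,\beta}$ for \emph{every} $u\in\Act$, hence for $v_R$; on $\cK_\delta\cap B_{2R}^c$ we have $\chi_R=0$, so $v_R=\Bar{v}$ and \cref{E-stablev} yields $\Bar{c}_0-\Bar{c}_1\Lyap_{\kappa,\beta}$; on the bounded set $\cK_\delta\cap B_{2R}$ the structural bound only gives a term $+c_2\Lyap_{\kappa,\beta}$, but there $\Lyap_{\kappa,\beta}$ is bounded and the term is absorbed into the additive constant. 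Adding the constant-coefficient second-order part and combining the three regions gives $\Lg^{v_R}\Lyap_{\kappa,\beta}\le c_0-c_1\Lyap_{\kappa,\beta}$ for every $\kappa>1$ with the same $\beta$, which is (a); the coefficient $c_1$ is uniform in $R$, while the additive constant may depend on the fixed radius.

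For part (b) I would let $R\to\infty$. Fixing some $\kappa>m$ and invoking part (a) uniformly in $R$ gives $\uppi_{v_R}(\Lyap_{\kappa,\beta})\le c_0/c_1$, so $\{\uppi_{v_R}\}$ is tight and, since $r$ grows like $|x|^m$ with $m<\kappa$, the family $\{r\}$ is uniformly integrable with respect to $\{\uppi_{v_R}\}$. As $R\to\infty$ we have $v_R\to\Hat{v}$ locally uniformly, so the ergodic occupation measures converge, $\uppi_{v_R}\Rightarrow\uppi_{\Hat{v}}$; uniform integrability upgrades this to $\uppi_{v_R}(r)\to\uppi_{\Hat{v}}(r)<\varrho^*+\tfrac{\epsilon}{2}$. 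Choosing $R=R_\epsilon$ large enough and setting $v_\epsilon\df v_{R_\epsilon}$ yields a continuous precise control with $\uppi_{v_\epsilon}(r)<\varrho^*+\epsilon$, which is exponentially ergodic by part (a).

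The hard part is the regularization step producing $\Hat{v}$: one must replace the merely measurable optimizer $v^*$ by a \emph{continuous precise} control that is simultaneously stable and still within $\tfrac{\epsilon}{2}$ of optimality. The delicate point is continuity of the ergodic-cost functional $v\mapsto\uppi_v(r)$ under such approximations, which hinges on having the weak convergence $\uppi_{v_R}\Rightarrow\uppi_{\Hat{v}}$ together with uniform integrability of $r$; it is precisely here that the $\kappa$-uniform Lyapunov estimates \cref{E-structural,E-stablev} with a common $\beta$ are indispensable, since they furnish the uniform moment bounds that make the passage to the limit legitimate.
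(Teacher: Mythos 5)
Your proposal does not follow the paper's route, and it contains two genuine gaps. The first, which you yourself flag as ``the hard part,'' is the regularization step: you assert that the merely measurable optimal control $v^*$ can be mollified into a \emph{continuous} precise control $\Hat{v}$ that is still stable and within $\nicefrac{\epsilon}{2}$ of optimality, calling this ``standard.'' It is not standard in this model, and no proof is offered. The obstruction is exactly the one that makes this lemma nontrivial: the structural bound \cref{E-structural} contains the destabilizing term $+c_2\Lyap_{\kappa,\beta}\Ind_{\cK_\delta}$, so there is no Lyapunov function that works uniformly over all controls; consequently a mollification of $v^*$ need not be stable at all, and the map $v\mapsto\uppi_v(r)$ has no continuity property that would survive an a.e.\ perturbation of the control without uniform moment bounds. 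The paper circumvents this entirely by a different device: it perturbs the running cost by a positive \emph{strictly convex} function $f\colon\Act\to\RR_+$ (costing only $\nicefrac{\epsilon}{3}$ in value), invokes the spatial truncation results of \cite[Theorems~4.1 and 4.2]{ABP14} to produce a near-optimal control $\Bar{v}_R$ that agrees with the stable constant control $\Bar{u}$ of \cite[Theorem~4.2]{AP15} on $B_R^c$, and then obtains continuity of $\Bar{v}_R$ \emph{inside} $B_R$ for free, because $\Bar{v}_R$ is the minimizer in the HJB equation and the minimand $u\mapsto b(x,u)\cdot p + r(x,u)+f(u)$ is strictly convex, hence has a unique, continuously varying minimizer. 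Continuity and near-optimality thus come simultaneously from the HJB characterization, not from smoothing a measurable optimizer; only the discontinuity at the concatenation boundary $\partial B_R$ is smoothed by a cutoff.

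The second gap is an internal inconsistency in your passage $R\to\infty$ for part (b). Your own version of part (a) absorbs the bad term of \cref{E-structural} on $\cK_\delta\cap B_{2R}$ into the additive constant, so that constant grows like $\sup_{B_{2R}}\Lyap_{\kappa,\beta}\sim R^\kappa$; it is \emph{not} uniform in $R$. But your part (b) then invokes ``part (a) uniformly in $R$'' to get $\uppi_{v_R}(\Lyap_{\kappa,\beta})\le c_0/c_1$, tightness of $\{\uppi_{v_R}\}$, and uniform integrability of $r$. With $c_0=c_0(R)\to\infty$ this bound is vacuous for the limit, and the weak convergence $\uppi_{v_R}\Rightarrow\uppi_{\Hat{v}}$ (which again requires uniform stability of the family, not available here) is also unjustified. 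Note that the paper never takes such a limit: near-optimality of the spliced control at a \emph{fixed} finite $R$ is part of the conclusion of \cite[Theorems~4.1 and 4.2]{ABP14}, so no convergence of occupation measures as $R\to\infty$ is needed. To repair your argument you would need both a proof of the regularization step and a uniform-in-$R$ Foster--Lyapunov bound for the spliced controls, neither of which your sketch provides.
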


\begin{proof}
By \cite[Theorem~4.2]{AP15} there exists a constant Markov control
$\Bar{u}$ and a positive vector $\beta\in\RR^{I}$ satisfying
\begin{equation}\label{EL7.1B}
\Lg^{\Bar{u}} \Lyap_{\kappa,\beta}(x) \;\le\;
\Bar{c}_0 - \Bar{c}_1 \,\Lyap_{\kappa,\beta}(x)\qquad \forall x\in \RR^I
\end{equation}
for all $\kappa>1$ and some constants $\Bar{c}_0$ and $\Bar{c}_1$.
Even though not stated in that theorem, it follows from its proof that
the constants $\Bar{c}_0$ and $\Bar{c}_1$ depend only on $\kappa$.
We perturb $r$ by adding a positive strictly
convex function $f\colon\Act\to\RR_+$, such that
the optimal value of the problem {\upshape(P1$^\prime$)} with $r$ replaced
by $r+f$ is smaller than $\varrho^* + \frac{\varepsilon}{3}$.
Following the proof of Theorems~4.1 and 4.2 in \cite{ABP14}, there exists
$R>0$ large enough and a stationary Markov control $\Bar{v}_R$, which
agrees with $\Bar{u}$ on $B_R^c$ and satisfies
$\uppi_{\Bar{v}_R} (r+f)<\varrho^* + \frac{2\varepsilon}{3}$.
This control satisfies, for some $V_R\in\Cc^2(B_R)$,
\begin{equation}\label{EL7.1C}
\min_{u\in\Act}\;\bigl[b(x,u)\cdot\grad V_R + r(x,u)+f(u)\bigr]\;=\;
b\bigl(x,\Bar{v}_R(x)\bigr)\cdot\grad V_R + r\bigl(x,\Bar{v}_R(x)\bigr)
+f\bigl(\Bar{v}_R(x)\bigr)
\end{equation}
for all $x\in B_R$.
Since $u\mapsto \{b(x,u)\cdot p + r(x,u)+f(u)\}$
is strictly convex whenever it is not constant, it follows by \cref{EL7.1C} that
$\Bar{v}_R$ is continuous on $B_R$.
Consider the concatenated Markov control which agrees with
$\Bar{v}_R$ in $B_R$ and with $\Bar{u}$ in $B_R^c$.
As in the proof of \cite[Theorem~2.2]{ABP14}, we can employ a cut-off function
to smoothen the discontinuity of this control at the concatenation boundary,
and thus obtain a Markov control $v_\varepsilon$ satisfying
$\uppi_{v_\varepsilon} (r+f)<\varrho^* + \varepsilon$.
Clearly then part (b) holds since $f$ is nonnegative,
while part (a) holds by \cref{EL7.1B} and the fact that
$v_\varepsilon$ agrees with $\Bar{u}$ outside a compact set.
This completes the proof. 
\end{proof}

Concerning the constrained problem (P2$^\prime$) and the fairness problem
(P3$'$) we have the following analogous result.

\begin{corollary}\label{C7.1}
For any $\varepsilon>0$, there exists a
continuous precise
control $v_\epsilon\in\Ussm$ satisfying \cref{L7.1}\,\textup{(}a\textup{)},
and constants
$\updelta^{\epsilon}_j<\updelta_j$, $j\in\cJ$
such that:
\begin{itemize}
\item[(i)] In the case of problem {\upshape(P2$^\prime$)} we have
\begin{equation*}
\uppi_{v_{\varepsilon}}(r_{\mathsf{o}})\;<\;
\varrho_{\mathsf{c}}^{*} +\varepsilon\,,\quad\text{and}\quad
\uppi_{v_{\varepsilon}}(r_{j}) \;\le\; \updelta^{\epsilon}_j\,,\quad j \in\cJ\,.
\end{equation*}
\item[(ii)] In the case of problem {\upshape(P3$^\prime$)}  we have
\begin{equation*}
\uppi_{v_{\varepsilon}}(r_{\mathsf{o}})\;<\;
\varrho_{\mathsf{f}}^{*} +\varepsilon\,,\quad\text{and}\quad
\uppi_{v_{\varepsilon}}(r_{j}) \;=\; \uptheta_j\, \sum_{\jmath\in\cJ}
\uppi_{v_{\varepsilon}}(r_{\jmath})\,,\qquad j\in\cJ\,.
\end{equation*}
\end{itemize}
\end{corollary}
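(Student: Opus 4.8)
The plan is to mirror the construction in \cref{L7.1}, replacing the unconstrained running cost $r$ by the Lagrangian running costs supplied by the characterizations of the constrained and fairness problems, and then to repair the constraints after the spatial truncation. I would rely throughout on two common ingredients. First, $\eom$ is convex, and for any relaxed ergodic occupation measure $\uppi(\D x,\D u)=\mu(\D x)\,\eta_x(\D u)$ the barycenter control $\Bar{v}(x)\df\int_\Act u\,\eta_x(\D u)\in\Act$ is precise, induces the same state marginal $\mu$ (because $b(x,\cdot)$ is affine and $\Sigma$ is constant, so $\int \Lg^u f\,\eta_x(\D u)=\Lg^{\Bar{v}(x)}f$), and by Jensen's inequality satisfies $\uppi_{\Bar{v}}(g)\le\uppi(g)$ for every running cost $g$ convex in $u$ --- in particular for $r_{\mathsf{o}}$ and each $r_j$. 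This converts convex combinations of occupation measures into genuine precise controls without raising any convex cost. Second, exactly as in \cref{L7.1}, concatenating a stable continuous control with the constant control $\Bar{u}$ of \cref{EL7.1B} outside a ball $B_R$ and smoothing the junction yields a continuous precise control that agrees with $\Bar{u}$ on $B_R^c$, hence satisfies \cref{L7.1}\,(a) for every $\kappa>1$; and as $R\to\infty$ the corresponding occupation measures converge in the costs $r_{\mathsf{o}},r_j$, by the uniform integrability afforded by the polynomial moment bound \cref{EL7.1A}.

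For part (i) I would start from an optimal occupation measure $\uppi^*$ for (P2$'$), which exists by \cite{AP15}, so $\uppi^*(r_{\mathsf{o}})=\varrho^*_{\mathsf{c}}$ and $\uppi^*(r_j)\le\updelta_j$. Invoking the Slater (strict feasibility) condition for (P2$'$) from \cite{AP15}, I would fix $\uppi_0\in\eom$ with $\uppi_0(r_j)<\updelta_j$ for all $j$ and $\uppi_0(r_{\mathsf{o}})<\infty$, and form $\uppi_\theta\df(1-\theta)\uppi^*+\theta\,\uppi_0\in\eom$. Then $\uppi_\theta(r_j)<\updelta_j$ strictly for every $\theta\in(0,1)$, while $\uppi_\theta(r_{\mathsf{o}})=\varrho^*_{\mathsf{c}}+\theta\bigl(\uppi_0(r_{\mathsf{o}})-\varrho^*_{\mathsf{c}}\bigr)<\varrho^*_{\mathsf{c}}+\tfrac{\varepsilon}{2}$ for $\theta$ small. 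Passing to the barycenter control of $\uppi_\theta$ only lowers each convex cost, so the strict inequalities persist; then the truncation-and-smoothing above, for $R$ large, produces the desired continuous precise $v_\epsilon$ satisfying \cref{L7.1}\,(a) with $\uppi_{v_\epsilon}(r_{\mathsf{o}})<\varrho^*_{\mathsf{c}}+\varepsilon$ and $\uppi_{v_\epsilon}(r_j)<\updelta_j$, and any $\updelta^\epsilon_j\in\bigl(\uppi_{v_\epsilon}(r_j),\updelta_j\bigr)$ completes (i).

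Part (ii) is harder because the fairness constraints are equalities: the barycenter step and, more seriously, the truncation against $\Bar{u}$ generically destroy the exact ratios $\uppi(r_j)=\uptheta_j\,\uppi(\Bar{r})$. Here I would work with the Lagrangian $h_{\uptheta,\uplambda^*}$ and the optimal data of \cref{T4.1}, perturb by a strictly convex $f$ on $\Act$ as in \cref{L7.1} to obtain a continuous minimizer, and concatenate with $\Bar{u}$ outside $B_R$; this yields a continuous precise control satisfying \cref{L7.1}\,(a) whose occupation measure realizes ratios $\uptheta^{(R)}_j\to\uptheta_j$ and objective $\to\varrho^*_{\mathsf{f}}$. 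The crux is to restore the ratios exactly. Since $\sum_j\uptheta_j=1$ there are only $J-1$ independent constraints, and I would recover them by a continuity/degree argument: varying the multiplier $\uplambda$ in a small neighbourhood of $\uplambda^*$ (equivalently, the target vector in the truncated problem) moves the achieved ratio vector continuously over a neighbourhood of $\uptheta$, and the continuity of $\uptheta\mapsto\inf_{\sH(\uptheta)}\uppi(r_{\mathsf{o}})$ from \cref{T4.1}\,(e), together with a Brouwer fixed-point argument, lets me select a parameter for which the achieved ratios equal $\uptheta_j$ exactly while the objective stays below $\varrho^*_{\mathsf{f}}+\varepsilon$. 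I expect this fixed-point restoration of the exact fairness constraints --- carrying it out jointly with continuity, \cref{L7.1}\,(a), and near-optimality --- to be the main technical obstacle; the corresponding step for the `N' network is executed in \cite{AP16}, and the same scheme applies here.
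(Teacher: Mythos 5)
Your occupation-measure ingredients are sound and do reflect what lies behind the paper's one-line proof (which simply cites Theorem~5.7 of \cite{AP15} for part (i) and declares part (ii) analogous): convexity of $\eom$, mixing an optimal measure with a Slater point to get strict feasibility, Jensen's inequality for barycenter controls (valid here since $b(x,\cdot)$ is affine and $r_{\mathsf{o}},r_j$ are convex in $u$), and truncation against $\Bar{u}$ to secure \cref{L7.1}\,(a). The genuine gap is \emph{continuity} of $v_\epsilon$. The barycenter control of $\uppi_\theta=(1-\theta)\uppi^*+\theta\,\uppi_0$ is only a measurable element of $\Usm$: neither $\uppi^*$ nor $\uppi_0$ comes with a continuous disintegration, and Jensen gives you precision, not regularity. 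The ``truncation-and-smoothing'' step cannot repair this: in the proof of \cref{L7.1} the cut-off function smooths only the discontinuity at the concatenation boundary $\partial B_R$, while continuity \emph{inside} $B_R$ is obtained from the HJB identity \cref{EL7.1C} together with strict convexity of the perturbed Hamiltonian, i.e., the control is \emph{constructed} as a continuous minimizing selector, not smoothed into continuity. Your pipeline never introduces the strictly convex perturbation $f$ or an HJB characterization for the constrained Lagrangian, so the control it outputs need not be continuous --- and continuity is precisely what the corollary is consumed for downstream, since \cref{D6.1} and \cref{L6.2} require a continuous precise control. A repair would either mollify the barycenter control and prove stability of $v\mapsto\uppi_v$ under a.e.\ convergence with uniform tightness, or, as in \cite{AP15}, run the entire argument through the perturbed Lagrangian HJB, harvesting near-optimality, strict feasibility, the Foster--Lyapunov bound, and continuity from one construction.

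For part (ii), your Brouwer fixed-point scheme rests on continuity of the map from the multiplier (or target-ratio) parameter to the ratio vector actually achieved by the truncated near-optimal control. That continuity is not free: minimizing selectors of the Lagrangian $h_{\uptheta,\uplambda}$ are in general only upper hemicontinuous in $\uplambda$, and the achieved ratios can jump as the parameter varies; \cref{T4.1}\,(e) gives continuity of the optimal \emph{value} in $\uptheta$, which is weaker than what your degree argument needs. You correctly identify this as the main obstacle but do not close it, whereas the paper disposes of both parts by invoking Theorem~5.7 and Remark~5.1 of \cite{AP15} (see also the way \cref{PT4.2C} is obtained in the proof of \cref{T4.2}), where the exact equality constraints are restored within the Lagrangian saddle-point framework rather than by an a posteriori fixed-point correction.
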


\begin{proof}
Part (i) follows as in Theorem~5.7 in \cite{AP15}.
The proof of part (ii) is completely analogous.
\end{proof}

\subsection{Proof of Theorem~\ref{T3.1}}

\begin{proof}[Proof of the lower bound]
Without loss of generality, suppose that $Z^{n_k}\in\fZ^{n_k}$,
for some increasing sequence $\{n_k\}\subset\NN$, is a collection of
scheduling policies in $\boldsymbol\fZ$ such that 
$J\bigl(\Hat{X}^{n_{k}}(0),Z^{n_{k}}\bigr)$ converges to a finite value
as $k\to\infty$.
Denote by $\Phi^{n_k}_T$, the mean empirical measure $\Phi^{Z^{n_k}}_T$ defined 
in \cref{E-emp}.
Then by \cref{P6.1} and the definitions of the running cost
$\Hat{r}$ in \cref{runcost-ex} and $J\bigl(\Hat{X}^{n}(0),Z^{n}\bigr)$
in \cref{cost-ds}
we obtain
\begin{equation}\label{PT3.1A}
\sup_{k\in\NN}\;\limsup_{T\to\infty}\;
\Exp^{Z^{n_k}}\biggl[\int_{0}^{T}\abs{\Hat{X}^{n_k}(s)}^m\,\D{s}\biggr]
\;<\; \infty\,.
\end{equation}
It is also clear by the definition of $\Phi^{n}_T$ and $J$ that
we can select a sequence $\{T_k\}\subset\RR_+$, with $T_{k}\to\infty$,
such that 
\begin{equation}\label{PT3.1B}
\int_{\RR^{I}\times\Act} r(x,u)\,\Phi^{n_{k}}_{T_{k}} (\D{x}, \D{u})\;\le\;
J\bigl(\Hat{X}^{n_{k}}(0),Z^{n_{k}}\bigr)+ \frac{1}{k} \qquad\forall\,k\in\NN\,.
\end{equation}
By \cref{L6.1} and \cref{PT3.1A},
$\{\Phi^{n_{k}}_{T_k}\,\colon\, k \in\NN \}$ is tight and the limit of
any converging subsequence $\{\Phi^{n_{k}'}_{T_k'}\}$ is in $\eom$.
Therefore it follows by \cref{PT3.1B} that
\begin{equation*}
\lim_{k\to0}\;J\bigl(\Hat{X}^{n_{k}}(0),Z^{n_{k}}\bigr)
\;\ge\; \inf_{\uppi\in\eom}\; \uppi (r)\;=\;\varrho^{*}\,.
\end{equation*} 
and this completes the proof. 
\end{proof}

\begin{proof}[Proof of the upper bound]

Recall $\Lyap_{\kappa, \beta}$ in \cref{Lyapk}.
Let $\kappa=m+2$.
By \cref{L7.1}, there exists a continuous precise control
$v_\varepsilon$ such that the corresponding ergodic occupation measure satisfies
$\uppi_{v_{\varepsilon}} (r) \;<\;
\varrho^* + \varepsilon$,  and \cref{EL7.1A} holds. 

For the $n^\text{th}$ system, we construct a concatenated Markov
scheduling policy
$\mathring{z}^n$ as follows. 
Recall \cref{DEJWC}.
Inside $\Breve{\sX}^{n}$, we apply the stationary policy $z^n[v_\epsilon]$
as in \cref{D6.1}, and outside $\Breve{\sX}^{n}$, we apply some Markov 
scheduling policy $z\in\sZ^n$ in
\cref{D-SDPf} that is exponentially stable. 
By \cref{P5.1,P6.3}
there exist positive constants $\Hat{c}_0$, $\Hat{c}_1$,
a positive vector $\beta\in\RR^I$, and $n_0\in\NN$,
such that
\begin{equation} \label{PT3.1C}
\widehat\cL_n^{\mathring{z}^n} \, \widetilde{\Lyap}_{\epsilon,\beta}(\Hat{x}) \;\le\;
\Hat{c}_0  - \Hat{c}_1\, \widetilde{\Lyap}_{\epsilon,\beta}(\Hat{x})\qquad
\forall \Hat{x}\in\sS^n\,,\quad\forall\, n\ge n_0\,.
\end{equation} 
This immediately implies that 
$\sup_{n\ge n_0}\,J(\Hat{X}^n(0),Z^n)<\infty$. 
Let  $\Tilde\Phi^n_T\equiv \Tilde\Phi^{\mathring{z}^n}_{T}$ be the corresponding mean
empirical measures as defined in \cref{EL6.2A}.
Then the Foster--Lyapunov condition
 in \cref{PT3.1C} implies that we can choose a sequence
$\{T_n\}$ such that 
\begin{equation}\label{PT3.1D}
\sup_{n\ge n_0}\;\sup_{T\ge T_n}\; \int_{\RR^I\times\Act}
\widetilde{\Lyap}_{\epsilon,\beta}(\Hat{x})\,
\Tilde\Phi^n_T(\D{\Hat{x}},\D{u})\;<\;\infty\,.
\end{equation}
WLOG, we assume that $T_n\to\infty$. 

It is clear that $\mathring{z}^n$ can be viewed as a function of
$\Hat{x}\in\sS^n$.
We let $\Hat{\mathring z}^n_{ij}(\Hat{x})
\;\df\; \frac{(\mathring{z}^n_{ij}(\Hat{x})- nz^*)}{\sqrt{n}}$
as in \cref{D-hatx}.
In analogy to
\cref{hatq-hatxz,haty-hatz} we define
\begin{align*}
\Hat{\mathring q}^n_i(\Hat{x}) &\;\df\; \Hat{x}_i - \sum_{j\in \cJ(i)}
\Hat{\mathring z}^n_{ij}(\Hat{x})\,, \quad \forall\, i \in\cI\,, \\
 \Hat{\mathring y}^n_j(\Hat{x}) &\;\df\;
 \frac{N^n_j- n \sum_{i\in \cI(j)} z_{ij}^*}{\sqrt{n}}
 - \sum_{i\in \cJ(j)} \Hat{\mathring z}^n_{ij}(\Hat{x})\,,
 \quad \forall j \in \cJ\,. 
\end{align*}

The running cost $\Hat{r}$ is uniformly integrable with
respect to the collection $\{\Tilde\Phi^n_T\,,\ n\in\NN\,,\,T\ge0\}$
by \cref{PT3.1D}.
Thus by  Birkhoff's ergodic theorem, for any $\eta>0$, we can choose a
ball $B(\eta)$, and a sequence $T_n$ such that
\begin{equation}\label{PT3.1E}
\babss{\int_{B(\eta)\times\Act}
\Hat{r}\bigl( (e\cdot\Hat{\mathring q}^n(\Hat{x})\bigr)^+\,u^c,\,
(e\cdot\Hat{\mathring y}^n(\Hat{x})\bigr)^+\,u^s\bigr)\,
\Tilde\Phi^n_T(\D{\Hat{x}},\D{u})-J(\Hat{X}^n(0),\mathring{z}^n)}\;\le\;
\frac{1}{n}+\eta \,,
\end{equation}
for all $ T\ge T_{n}$.

By  the JWC condition on  $\{\Hat{x}\in\Breve\sS^n\}$ and \cref{L-Xn}, we have
$(e\cdot\Hat{\mathring q}^n(\Hat{x})\bigr)^+ = (e\cdot\Hat{x})^+$ and
$(e\cdot\Hat{\mathring y}^n(\Hat{x})\bigr)^+ = (e\cdot\Hat{x})^-$
for all $\Hat{x}\in B(\eta)$, and for all large
enough $n$.
On the other hand we have
\begin{equation}\label{PT3.1F}
\sup_{(\Hat{x},u)\in B(\eta)\times\Act}\;
\babs{\Hat{r}\bigl( (e\cdot\Hat{\mathring q}^n(\Hat{x})\bigr)^+\,u^c,\,
(e\cdot\Hat{y}^n(\Hat{\mathring x})\bigr)^+\,u^s\bigr)-r(\Hat{x},u)}
\;\xrightarrow[n\to\infty]{}\;0\,.
\end{equation}
Since $v_\varepsilon$ is a continuous precise
control then $\Tilde\Phi^n_T$ converges to 
$\uppi_{v_\epsilon}$ in $\cP(\RR^I\times\Act)$ as $n$ and $T$ tend to $\infty$
by \cref{L6.2}.
Thus, using \cref{PT3.1F} and a triangle inequality, we obtain
\begin{equation}\label{PT3.1G}
\int_{B(\eta)\times\Act}
\Hat{r}\bigl( (e\cdot\Hat{\mathring q}^n(\Hat{x})\bigr)^+\,u^c,\,
(e\cdot\Hat{\mathring y}^n(\Hat{x})\bigr)^+\,u^s\bigr)\,
\Tilde\Phi^n_{T_n}(\D{\Hat{x}},\D{u})
\;\xrightarrow[n\to\infty]{}\;
\int_{B(\eta)\times\Act} r(x,u)\,\uppi_{v_\epsilon}(\D{x},\D{u})\,.
\end{equation}
By \cref{PT3.1E,PT3.1G} we obtain
\begin{equation*}
\limsup_{n\to\infty}\;J(\Hat{X}^n(0),\mathring{z}^n)
\;\le\; \varrho^{*} + \epsilon +\eta\,.
\end{equation*}
Since $\eta$ and $\epsilon$ are arbitrary, this completes the proof
of the upper bound.
\end{proof}

\begin{remark}
It is clear that if the network satisfies \cref{E4.3} and $\zeta=0$ in
\cref{E-cost}, then the same conclusion for the lower bound
can be drawn by invoking \cref{P6.2} in the preceding proof.
\end{remark}

\subsection{Proof of Theorem~\ref{T3.2}}

\begin{proof}[Proof of the lower bound]
The proof follows by a similar argument as in the proof of the lower bound
for \cref{T3.1}. 
Let $\{Z^{n_k}\in\fZ^{n_k}\} \subset \boldsymbol\fZ$, with
$\{n_{k}\}\subset\NN$ an increasing sequence,
such that 
$J_{\mathsf{o}}\bigl(\Hat{X}^{n_{k}}(0),Z^{n_{k}}\bigr)$ converges to
a finite value.
Select an increasing sequence $\{T_k\}\subset\RR_+$ such that
\eqref{PT3.1B} holds with $J$ replaced by $J_\circ$ and $r$ by $r_\circ$.
Following the proof of \cref{T3.1},
let $\Hat\uppi\in\cP(\RR^I \times \Act)$ be the limit of
$\Phi^{n_k'}_{T_k'}$ along some subsequence $\{n_k',T_k'\}\subset\{n_k,T_k\}$. 
Recall the definition of $r_{j}$ in \cref{E-rj}.
Since $r_{j}$ is bounded below, taking limits, we obtain
$\Hat\uppi(r_{j})\;\le\; \updelta_j$, $j\in\cJ$.
Thus,  by Lemmas~3.3--3.5 and Theorems~3.1--3.2 in \cite{AP15},
optimality implies that
$\Hat\uppi(r_{\mathsf{o}})\ge\varrho^{*}_{\mathsf{c}}$. 
As the proof of \cref{T3.1}, we obtain,
\begin{equation*}
\liminf_{k\to\infty}\;J_{\mathsf{o}}\bigl(\Hat{X}^{n_{k}'}(0),Z^{n_{k}'}\bigr)
\;\ge\;\Hat\uppi(r_{\mathsf{o}})\;\ge\;\varrho^{*}_{\mathsf{c}} \,.
\end{equation*}
This proves the lower bound. 
\end{proof}

\begin{proof}[Proof of the upper bound]
Let $\epsilon>0$ be given.
By \cref{C7.1}, there exists a continuous
precise control $v_{\epsilon} \in \Ussm$ and constants
$\updelta^{\epsilon}_j<\updelta_j$, $j\in\cJ$, satisfying \cref{EL7.1A}, and
\begin{equation*}
\uppi_{v_{\epsilon}}(r_{\mathsf{o}})\le \varrho^*_{\mathsf{c}}+\epsilon\,,
\quad \text{and} \quad
\uppi_{v_{\epsilon}}(r_j)\le \updelta^{\epsilon}_j\,, \quad \forall j\in\cJ \,.
\end{equation*}

For the $n^{\rm th}$ system, we construct a  Markov scheduling policy $Z^n$
as in the proof of the upper bound of \cref{T3.1}, by concatenating
$z^n[v_\epsilon]$ and $z\in\sZ^n$ in \cref{D-SDPf}.

Following the proof of part (i) and choosing $\eta$ small enough,
i.e.,
$\eta< \epsilon\wedge\frac{1}{2}\min\{\updelta_j-\updelta^{\epsilon}_j,\;j\in\cJ\}$,
we obtain
\begin{align*}
\limsup_{n\to\infty}\;J_{\mathsf{o}}(\Hat{X}^n(0),Z^n) &\;\le\;
\varrho^{*}_{\mathsf{c}} + 2\epsilon\,,\\[5pt]
\limsup_{n\to\infty}\;J_{\mathsf{c},j}\bigl(\Hat{X}^{n}(0), Z^{n}\bigr)
&\;\le\; \frac{1}{2}(\updelta_j+\updelta^{\epsilon}_j) \,, \quad j \in\cJ\,.
\end{align*}
This completes the proof of the upper bound.
\end{proof}

\subsection{Proof of Theorem~\ref{T4.2}}

\begin{proof}[Proof of the lower bound]
The proof follows along the same lines as that of \cref{T3.2},
with the only difference that we use \cref{P6.2} instead of
\cref{P6.1} to assert tightness of the ergodic occupation measures.
With $\Hat{\uppi}$ as given in that proof, we have
\begin{equation}\label{PT4.2A}
\liminf_{k\to\infty}\;J_{\mathsf{o}}\bigl(\Hat{X}^{n_{k}}(0),Z^{n_{k}}\bigr)
\;\ge\;\Hat\uppi(r_{\mathsf{o}})\,.
\end{equation}
We then obtain
\begin{equation}\label{PT4.2B}
(\uptheta_j-\epsilon) \Hat\uppi(\Bar{r})\;\le\; \Hat\uppi(r_j)
\;\le\; (\uptheta_j+\epsilon) \Hat\uppi(\Bar{r})\qquad\forall\,j\in\cJ\,,
\end{equation}
by \cref{ET4.2A} and the uniform integrability of
\begin{equation*}
\frac{1}{T}\;
\Exp^{Z^{n_k}} \left[\int_{0}^{T}
\bigl(\Hat{Y}^{n}_j(s)\bigr)^{\Tilde{m}}\,\D{s}\right]\,,\qquad j\in\cJ\,,
\end{equation*}
which is asserted by \cref{C6.1}.
The proof is then completed using \cref{PT4.2A,PT4.2B} and
\cref{T4.1}\,(e).
\end{proof}

\begin{proof}[Proof of the upper bound]
This also follows along the lines as that of \cref{T3.2}. 
For the limiting diffusion control problem, by Theorem~5.7 and Remark~5.1
in \cite{AP15},
for any $\epsilon>0$,
there exists a continuous
precise control $v_{\epsilon} \in \Ussm$ for (P3$'$)
satisfying \cref{EL7.1A} and
\begin{equation}\label{PT4.2C}
\uppi_{v_{\epsilon}}(r_{\mathsf{o}})\;\le\; \varrho^*_{\mathsf{f}} +\epsilon\,,
\qquad\text{and}\quad
\uppi_{v_{\epsilon}}(r_{j})\;=\;\uptheta_j\,\uppi_{v_{\epsilon}}(\Bar{r})\,,
\quad j\in\cJ\,. 
\end{equation}
In addition, we have 
\begin{equation*}
\inf_{\epsilon\in(0,1)}\; \uppi_{v_{\epsilon}}(\Bar{r})>0\,.
\end{equation*}
This follows from observing that
$\{\uppi_{v_{\epsilon}}\,,\;\epsilon\in(0,1)\}$ is tight,
and $(e\cdot x)^-$ is strictly positive on an open subset of $B_1$, and from applying
 the Harnack inequality for the density of the invariant probability
measure of the limiting diffusion. 

For the $n^{\rm th}$ system, we construct a  Markov scheduling policy $\mathring{z}^n$
as in the proof of the upper bound of \cref{T3.2}, and obtain
\begin{equation}\label{PT4.2D}
\begin{split}
\limsup_{n\to\infty}\;J_{\mathsf{o}}(\Hat{X}^n(0),\mathring{z}^n) &\;\le\;
\varrho^{*}_{\mathsf{f}} + \epsilon\,,\\[5pt]
\lim_{n\to\infty}\;
J_{\mathsf{c},j}\bigl(\Hat{X}^{n}(0),\mathring{z}^n\bigr)
&\;=\; \uppi_{v_{\epsilon}}(r_{j})\,,\qquad j \in\cJ\,.
\end{split}
\end{equation}
The result then follows by \cref{PT4.2C,PT4.2D}, thus
completing the proof.
\end{proof}

\section{Conclusion} \label{S8}

In this work as well as in \cite{ABP14, AP15, AP16}, we have studied ergodic
control problems for multiclass multi-pool networks in the H--W regime under the
hypothesis that at least one abandonment parameter is positive. 
The key technical contributions include (i) the development of a new framework of
ergodic control (unconstrained and constrained) of a broad class of diffusions,
(ii) the stabilization of the limiting diffusion and the diffusion-scaled
state processes, and (iii) the technique to prove asymptotic optimality involving
a spatial truncation and concatenation of scheduling policies that are stabilizing. 
The methodology and theory can be potentially used to study ergodic control
of other classes of stochastic systems. 

There are several open problems that remain to be solved. 
First, in this work, we have identified a class of BQBS stable networks as discussed
in \cref{S4}. It will be interesting to find some examples of network models in which
the boundedness of the queueing process would not imply the boundedness of the
state process. 
Second, we have studied the networks with at least one positive abandonment parameters.
It remains to study the networks with no abandonment.
The challenges lie in understanding the stability properties of both the
limiting diffusions and the diffusion-scaled state processes. 
It is worth noting that the existence of a stabilizing control asserted
in \cref{T2.1}, which is established via the
leaf elimination algorithm in \cite{AP15},
depends critically on the assumption that
at least one abandonment parameter is positive.
Although the proof of exponential ergodicity of the BSPs also relies
on that assumption,
this property is expected to hold
with certain positive safety staffing for at least one server pool 
when all abandonment rates are zero.

\section*{Acknowledgements}
This research was supported in part by the Army Research Office through
grant W911NF-17-1-0019, and in part by the National Science Foundation through
grants  DMS-1715210 and DMS-1715875.
In addition, the work of Ari Arapostathis was supported in part by the Office of Naval
Research through grant N00014-14-1-0196.


\end{document}